\def\E{\end{document}}
\newcommand{\m}[1]{\mathbb{#1}}
\begin{document}
% \title{}
% \date{}
% \author{}
\title{\bf Feedback approximate controllability  of  blowup points  for the  heat equation with anti-interference blowup profile
}

\date{}
\author{Ping Lin\footnote{  School of
Mathematics \& Statistics, Northeast Normal University, Changchun
130024,  P. R. China. E-mail address:
linp258@nenu.edu.cn.} \qquad Hatem Zaag\footnote{
Universit\'e Sorbonne Paris Nord,
% Universit\'{e} Paris 13, Scrbonne Paris Cit\'{e},
LAGA, CNRS (UMR 7539), F-93430, Villetaneuse, France. Email:
hatem.zaag@univ-paris13.fr.
}
\\
\\
 }
 \maketitle

 \bibliographystyle{plain}

\newtheorem{theorem}{Theorem}
\newtheorem{definition}{Definition}[section]
\newtheorem{lemma}[definition]{Lemma}
\newtheorem{proposition}[definition]{Proposition}
\newtheorem{corollary}[definition]{Corollary}
\newtheorem{remark}[definition]{Remark}

\renewcommand{\theequation}{\thesection.\arabic{equation}}
\catcode`@=11 \@addtoreset{equation}{section} \catcode`@=12
{ {\bf Abstract.}
This paper is concerned with a  feedback approximate controllability problem of blowup points for the heat equation.  We show that the system   is  approximately controllable
 for  blowup points with feedback controls and the feedback operator is bounded at any time before  blowup. It is also proved that the blowup profile for feedback  controllability of
  blowup points   is stable with respect to initial data. That is,
  suppose that the initial data has a very small perturbation,
  the blowup profiles  also have tiny changes.
 More precisely, it just undergoes a tiny translation in space and
    time.
  This means that our feedback strategy is anti-interference.
% can be the unique blowup point of
% the corresponding solution  with  a certain feedback control, and  for any  control, $a\in \Omega\setminus \overline{\omega}$ could not  be the unique blowup point.\\

\bigskip

\bf Keywords. \rm heat equation, blowup point, feedback control, approximate controllability, anti-interference blowup profile  \\

\bf Mathematics Subject Classification 2020. \rm 35K20, 93B05, 93B52

\section{Introduction}

Control issues are almost ubiquitous. In a control problem, there is a controlled object and one (or more) control actions. According to the basic action principles of control, it can be divided into two types: open-loop control and closed-loop control (also known as feedback control). It is called open-loop control to design a series of instructions to control the object according to control requirements and external interference. The characteristic of this scheme is to control based solely on the expected results, regardless of the consequences of the control effect. Another type of method that determines the control effect based on changes in the controlled object is called closed-loop control, or feedback control. Whether it is to suppress the influence of external disturbances or reduce the influence of internal parameter changes, feedback control is far superior to open-loop control. It is worth pointing out that feedback is a special concept in control theory.

It is well known that blowup widely
 exists in nature and in practical applications. Some
nonlinear evolution differential  equations can be used to describe this phenomenon (see for instance \cite{Abdelhedi}, \cite{Biumj90}-\cite{Bricmont}, \cite{Stevens 1}-\cite{Stevens 3}, \cite{Filippas}, \cite{Filippas-1}, \cite{GKcpam85}-\cite{Guo-Hu},
\cite{Herrero-1}-\cite{Kang1}, \cite{Khenissy}, \cite{Zaag2}-\cite{Nguyen1}, \cite{TZtams19}, \cite{J.J.L.}, \cite{zhangzc1},
\cite{zhangzc2}).
 Roughly speaking, blowup is a conception  which means that a solution is
unbounded in finite time. In certain cases,
the blowup of a solution is desired.
For instance,
the dramatic increase in temperature  leads to the ignition of a chemical
reaction.
 However,  solutions to linear partial differential  equations  without control
generally globally exist. It is naturally interesting to find  feedback controls
 to these equations such that the corresponding solutions blow up in
 finite time at some given place.

\medskip

Let $\Omega$ be a bounded domain of $\mathbb{R}^n$ with smooth boundary $\partial \Omega$.
Consider the following   control system,
\begin{eqnarray}\label{xe1.1}
\left\{\begin{array}{ll} y_t-
\Delta y=\mathds{1}
_\omega u,&x\in \Omega,\ t>0,\\
y=0,& x\in \partial\Omega,\  t>0,\\
\displaystyle  y(x,0)=y_0(x),&
x\in \Omega.
\end{array}\right.
\end{eqnarray}
Here the control $u$ acts on a nonempty and open subset
$\omega\subset\Omega$, and  $\mathds{1}_\omega$ is the characteristic
function of the set $\omega$.

\medskip

In the absence of control, we know that the solution to system (\ref{xe1.1})
 globally exists. The following definition refers to blowup time and blowup point of a solution  to  (\ref{xe1.1}) with a feedback  control.
%%%%%%%%%%%%%%%%%%%%%%%%%%
%%%%%%%%%%%%%%%%%%%%%%%%%%
\begin{definition}[Notions of blowup time and point]\label{defbup}
Let $T>0$ and $a\in \Omega$. We say that $T$ is the blowup time and $a$ is  a blowup point of the  solution $y$ to system (\ref{xe1.1}) with  a feedback control
 $u$,  if $y(x,t)$ exists for all $(x,t)\in \Omega \times [0,T)$  with
 $|y(a_j,t_j)|\to +\infty$, as $j\rightarrow\infty$, for some sequences $a_j\to a$ and $t_j \to T$.
\end{definition}

In \cite{LinZaag}, Lin and Zaag proposed  the following feedback exact controllability problem for blowup points.

 \medskip

 \textbf{Problem ($P$)}. \textit{ Given   $a\in \Omega$, $T>0$ and initial data $y_0\in H_0^1(\Omega)$,   can  we find a  feedback control $u$
  such that $T$ is the blowup time of the solution $y$ to
  (\ref{xe1.1}), with $a$ being the unique blowup point of $y$?}

 \medskip

Lin and Zaag \cite{LinZaag} studied Problem ($P$) and obtained the
 following  result (see Theorem 2  in \cite{LinZaag}).

 \medskip

\begin{proposition} \label{prpo1.1}     For any $a\in \omega$ and any $T>0$, there exist $T_1\in (0, T/2)$ and $\widetilde{y}_0\in C_0^\infty(\omega)$ such that for any $y_0\in H_0^1(\Omega)$,  the  solution
$y$ to  (\ref{xe1.1}) with the following feedback control $$u(x,t):=\left\{\begin{array}{ll}-\mathds{1}_\omega^*P(t)({y}(t)-\widetilde{y}_0\big)(x)-\Delta \widetilde{y}_0(x),\ &(x,t)\in \Omega\times  [0,T-T_1), \\
|y|^{p-1}y(x,t), \ &(x,t)\in \Omega\times [T-T_1,T),
\end{array}\right.$$
exists on $[0,T)$, $T$ is the blowup time of $y$ and $y$
has a unique blowup point $a$, where $p>1$.
Here, $P\in C_S\big([0,T-T_1); \Sigma^+(H)\big)$ is the unique mild solution to the following Riccati system,
 \begin{eqnarray}\label{xe1.2}
\left\{\begin{array}{ll} P'(t)+\Delta^*P(t)+P(t)\Delta-P(t)\mathds{1}_\omega\mathds{1}^*_\omega P(t)=0, \ t\in[0,T-T_1)
,\\
\lim\limits_{(s,z)\rightarrow(T-T_1,z_0)}\langle P(s)z, z \rangle=+\infty,\ \mbox{for each} \ z_0\in H\
\mbox{and}\ z_0\neq 0.
\end{array}\right.
\end{eqnarray}

Moreover, for all $R>0$,
 \begin{eqnarray}\nonumber
\sup\limits_{\big\{|x-a|\leq R
\sqrt{(T-t)|\log(T-t)|}\big\}}\left|(T-t)^{\frac{1}{p-1}}y(x,t)-f\left(\frac{x-a}{\sqrt{(T-t)|\log(T-t)|}}\right)\right|\rightarrow 0\ \mbox{as} \ t\rightarrow T.
\end{eqnarray}
Here  $H=L^2(\Omega)$,  $\Sigma^+(H)$ denotes the Banach space of all symmetric and positive operators acting in $H$ and  $C_S([0,T-T_1);\Sigma^+(H))$ denotes the set of all mappings $S:[0,T-T_1)\rightarrow \Sigma^+(H)$ such that $S(\cdot
)z_0$ is continuous on $[0,T-T_1)$ for each $z_0\in H$. $\mathds{1}^*_\omega$ is the adjoint operator of $\mathds{1}_\omega$,
$\Delta^*$  is the adjoint operator of $\Delta$, and
\begin{eqnarray}\label{fnew}
f(\eta)=\left(p-1+\frac{(p-1)^2}{4p}|\eta|^2\right)^{-\frac{1}{p-1}}, \ \forall\ \eta\in \mathbb{R}.
\end{eqnarray}
\end{proposition}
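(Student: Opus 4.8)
\medskip

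\noindent\emph{Proof proposal.}
The plan is to exploit the two branches of the feedback control $u$ and to build $y$ in two successive stages, with $\widetilde{y}_0$ playing a double role: it is the target reached at time $T-T_1$ by the linear feedback on $[0,T-T_1)$, and it is the initial datum at time $T-T_1$ of a semilinear heat flow that blows up at time $T$, at the single point $a$, with profile $f$. In the first stage I would set $z:=y-\widetilde{y}_0$; since $\widetilde{y}_0\in C_0^\infty(\omega)$ we have $\mathds{1}_\omega\Delta\widetilde{y}_0=\Delta\widetilde{y}_0$, so $y_t-\Delta y=\mathds{1}_\omega u$ with the first branch of $u$ becomes the closed-loop system
\[
z_t-\Delta z+\mathds{1}_\omega\mathds{1}_\omega^*P(t)z=0,\qquad z(x,0)=y_0(x)-\widetilde{y}_0(x).
\]
The essential input is the theory of the operator Riccati system (\ref{xe1.2}) with singular terminal condition: minimizing $\int_0^{T-T_1}\|\mathds{1}_\omega^*v\|_H^2\,dt$ over controls steering $z(0)=y_0-\widetilde{y}_0$ to $z(T-T_1)=0$, together with the null-controllability (observability) inequality for the heat equation on $(\Omega,\omega)$, produces the unique mild solution $P\in C_S\big([0,T-T_1);\Sigma^+(H)\big)$ of (\ref{xe1.2}); the feedback $v=-\mathds{1}_\omega^*P(t)z$ is then well defined with $P(t)$ bounded for every $t<T-T_1$, the closed-loop problem is well posed on $[0,T-T_1)$ for every $y_0\in H_0^1(\Omega)$, and the infinite terminal penalty forces $z(\cdot,t)\to 0$ in $H$ as $t\uparrow T-T_1$. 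Parabolic smoothing on $[(T-T_1)/2,T-T_1)$ upgrades this to convergence in a strong enough norm that $y(\cdot,T-T_1)=\widetilde{y}_0$ and $\widetilde{y}_0$ is an admissible initial state for the second stage. (This is the Lions / Da Prato--Ichikawa LQ-feedback route to null controllability; the nonzero but smooth target $\widetilde{y}_0$ adds no difficulty.)

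\medskip

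In the second stage, on $[T-T_1,T)$ the control is $u=|y|^{p-1}y$, so $y$ solves $y_t-\Delta y=\mathds{1}_\omega|y|^{p-1}y$ with Dirichlet data, starting from $\widetilde{y}_0$ at $t=T-T_1$. I would choose $\widetilde{y}_0$ to be a smooth function supported in a small ball $B_r(a)$ with $\overline{B_r(a)}\subset\omega$, obtained by truncating (in the region where it is already small) a Bricmont--Kupiainen initial datum \cite{Bricmont} for blowup at elapsed time $T_1$, at the point $a$, with profile $f$. Since $\mathds{1}_\omega\equiv 1$ on $\omega\supset B_r(a)$, the equation coincides with $y_t-\Delta y=|y|^{p-1}y$ throughout $\omega$, so the inner self-similar analysis near $a$ (self-similar variables and a spectral/center-manifold reduction in the spirit of Giga--Kohn \cite{GKcpam85} and Bricmont--Kupiainen \cite{Bricmont}) applies verbatim and yields the blowup rate $(T-t)^{-1/(p-1)}$ together with the profile $f$; the single-point-blowup property of such solutions, the smallness of $\widetilde{y}_0$ off $B_{r/2}(a)$, and standard parabolic estimates keep $y$ bounded on $\overline{\Omega}\setminus B_r(a)$ up to $t=T$, so $a$ is the unique blowup point. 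To force the blowup time to be exactly $T$ and the blowup point to be exactly $a$ (the Merle--Zaag stability theorem alone only gives nearby values after truncation), I would either redo the Bricmont--Kupiainen construction directly for the cut-off equation with data in $C_0^\infty(\omega)$, or let $\widetilde{y}_0$ range over a small finite-dimensional family of truncated profiles and run the customary Brouwer-degree/shooting argument, adjusting $T_1\in(0,T/2)$ if necessary.

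\medskip

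The main obstacle is the interface between the two stages: all the freedom in choosing $\widetilde{y}_0$ has to reconcile them, and since on the linear side any state is an admissible target, the binding constraints are on the nonlinear side, where $\widetilde{y}_0$ must (i) be smooth and supported in $\omega$, (ii) produce blowup at the prescribed $(T,a)$ with profile $f$, and (iii) keep the solution of the cut-off equation bounded away from $a$. The technical heart of the proof is establishing (i)--(iii) simultaneously: showing that a profile-generating datum of Bricmont--Kupiainen type can be localized inside $\omega$ without wrecking the construction, that the spatial cut-off $\mathds{1}_\omega$ does not disturb the inner analysis near $a$, and that the intrinsically approximate stability statement can be promoted to the exact prescription of $T$ and $a$. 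Once the first stage is in place, the boundedness of the feedback operator before blowup and the $C_S([0,T-T_1);\Sigma^+(H))$-regularity of $P$ are immediate from the Riccati theory used there.
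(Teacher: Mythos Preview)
Your proposal is correct and follows essentially the same two-stage strategy as the paper (and the original argument in \cite{LinZaag}): first a Riccati-based feedback null controllability step via S\^{i}rbu \cite{Sirbu} to steer $z=y-\widetilde{y}_0$ to zero at $T-T_1$ (using exactly the identity $\mathds{1}_\omega\Delta\widetilde{y}_0=\Delta\widetilde{y}_0$ you wrote), then a nonlinear construction of $\widetilde{y}_0\in C_0^\infty(\omega)$ for the localized equation $y_t-\Delta y=\mathds{1}_\omega|y|^{p-1}y$ via similarity variables, a shrinking-set/finite-dimensional reduction, and a Brouwer-type topological argument. The only cosmetic difference is that the paper carries out the construction in the Merle--Zaag/Mahmoudi--Nouaili--Zaag framework rather than citing Bricmont--Kupiainen, and obtains $z(T-T_1)=0$ directly from the optimal control formulation rather than via a convergence-plus-smoothing argument; your identification of the binding constraints (i)--(iii) on the nonlinear side and the need to redo the construction for the cut-off equation is exactly the point.
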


Let $a\in \omega$, $T>0$ and $y_0\in H_0^1(\Omega)$. The proof of Proposition \ref{prpo1.1} (see the proof of Theorem 2, \cite{LinZaag})  relies on 2 arguments:

\medskip

\textit{A Nonlinear Construction Argument:} We constructed a
  blowup solution to some parabolic equation with a localized
  nonlinearity.
 More precisely, we found  a special initial data $\widetilde{y}_0$ supported in $\omega$ such
that the corresponding  solution to system (\ref{xe1.1}) (where initial time is replaced by $T-T_1$ and initial data is replaced by $\widetilde{y}_0$)  with the feedback control $u(x,t)=|y|^{p-1}y(x,t), \ (x,t)\in \Omega\times [T-T_1,T)$ ($p>1$), blows up in time $T$ with
$T_1\in(0,T/2)$ at unique blowup point $a$, with the prescribed blowup
profile;

\medskip

\textit{A Linear Control Argument:} We connected
  the given initial state $y_0$ to the initial data of the
  Construction Step. That is,
we proved that the  solution $y$ to (\ref{xe1.1}) with the feedback control $u(x,t)=-\mathds{1}_\omega^*P(t)({y}(t)-\widetilde{y}_0\big)(x)-\Delta \widetilde{y}_0(x),\ (x,t)\in \Omega\times  [0,T-T_1)$  satisfies $y(T-T_1)=\widetilde{y}_0$.
 Combining these two steps, we could prove Proposition \ref{prpo1.1}.

 \medskip

In order to state our problem studied in this paper, we need the following definition.

\begin{definition} [Notion of feedback approximate controllability of  blowup points] Given  $a\in \Omega$, $T>0$ and $y_0\in H_0^1(\Omega)$, we say that system (\ref{xe1.1}) is  approximately controllable  for  blowup points with feedback controls if
 for each $\varepsilon>0$, we can find a  feedback control $u$
such that the corresponding solution $y$ to
  (\ref{xe1.1}) blows up in time $T^*$ and
has a unique blowup point $a^*$ with $|T^*-T|<\varepsilon$ and $|a^*-a|<\varepsilon$.\end{definition}

In this paper, we would like to obtain a result on feedback approximate  controllability of  blowup points  for system (\ref{xe1.1}) with an anti-interference blowup profile. A natural question to ask is that the
 feedback exact controllability of blowup points for system (\ref{xe1.1}) has been well solved in \cite{LinZaag}, why the approximate one is to be studied?
The following two reasons give the answer to this question.

 \medskip

(i) As we see in Proposition \ref{prpo1.1}, the feedback operator $P(\cdot)$  which makes the solution $y$ of system (\ref{xe1.1}) to reach $\widetilde{y}_0$ in time $T-T_1$ is the solution to  Riccati system (\ref{xe1.2}). Since $P(\cdot)$ satisfies the second equality of (\ref{xe1.2}), it is indeed an unbounded operator. This is because the feedback null controllability
 result for linear heat equation obtained in \cite{Sirbu} was used to prove this property, and one can not make system (\ref{xe1.1}) be feedback null controllable by a bounded linear feedback operator due to the backward uniqueness of the solution to linear heat equation.

  \medskip

From point of view of application, it is better if the feedback operator could be bounded in any time before blowup. Since $P\in C_S\big([0,T-T_1); \Sigma^+(H)\big)$, we have that  for any $T'\in(0,T-T_1)$, $P\in C_S\big([0,T']; \Sigma^+(H)\big)$, i.e., $P(\cdot)$ is a bounded operator on $[0,T']$. To achieve this aim,
 we could first choose a time $T'$ close enough to $T-T_1$ and act the
 feedback control
 $u(\cdot,t)=-\mathds{1}_\omega^*P(t)({y}(t)-\widetilde{y}_0\big)(\cdot)-\Delta
 \widetilde{y}_0(\cdot)$ on $[0,T']$. To avoid errors, $T'$ should be
 allowed to be
 taken
 from a sufficiently small interval very close to $T-T_1$. On the other hand, as we mentioned above, the corresponding  solution to system (\ref{xe1.1}) (where initial time is replaced by $T-T_1$ and initial data is replaced by $\widetilde{y}_0$)  with the feedback control $u(x,t)=|y|^{p-1}y(x,t), \ (x,t)\in \Omega\times [T-T_1,T)$ ($p>1$), blows up in time $T$ at unique blowup point $a$.  Then, the following problem should be answered: For any $\varepsilon>0$, when the feedback control $u(\cdot,t)=-\mathds{1}_\omega^*P(t)({y}(t)-\widetilde{y}_0\big)(\cdot)-\Delta \widetilde{y}_0(\cdot)$ acts on time $[0,T']$  with $T'\in(0,T-T_1)$ in
a sufficiently small interval very close to $T-T_1$,  could $y(T')$  be sufficiently approximate to $\widetilde{y}_0$ such that when we further take the feedback control $|y|^{p-1}y$ as $t> T'$,  the solution $y$ to
  (\ref{xe1.1}) could blow up in time $T^*$ and
has  unique blowup point $a^*$, with $|T^*-T|<\varepsilon$ and $|a^*-a|<\varepsilon$? For this purpose, we naturally have  to consider  feedback approximate  controllability of  blowup points for system (\ref{xe1.1}).

  \medskip

(ii) It is important to show that whether the blowup profile for feedback  controllability of  blowup points   is stable with respect to initial data. That is, suppose that the initial data has a very small perturbation, will the blowup profiles  also have  tiny changes? The positive answer to this question would mean that our feedback strategy is anti-interference. This is very important in practical application. To this end, studying the case of approximate controllability is more reasonable.

   \medskip

 The following theorem is our main result, which could give a positive answer to the above questions proposed in (i) and (ii). Theorem \ref{Main Tho} could be  viewed as a result of approximate controllability with anti-interference blowup profile,
and the feedback operator stated in it is bounded at any time before  blowup.

\medskip

\begin{theorem} \label{Main Tho}     Given  $a\in \omega$ and $T>0$. Let $T_1\in (0, T/2)$ and $\widetilde{y}_0\in C_0^\infty(\omega)$ be given in Proposition \ref{prpo1.1}. Let $y_0\in H_0^1(\Omega)$. Then for any $\varepsilon>0$, there exists $\widehat{\varepsilon}_1\in (0, \min({\varepsilon}/{4}, {T_1}/{4}))$ such that we
can find ${\delta}\in(0,\min\{\varepsilon/4,T-T_1\})$, $\delta^*>0$ with
$({\delta}-\delta^*,{\delta}+\delta^*)\subset(0,\min\{\varepsilon/4,T-T_1\})$  and $\varepsilon^*>0$  such that for any $\widehat{\varepsilon}\in ({\delta}-\delta^*,{\delta}+\delta^*)$ and any initial data $y_0^*\in H_0^1(\Omega)$ satisfying $\|y_0^*-y_0\|_{H_0^1(\Omega)}<\varepsilon^*$, the  solution
$y$ to  (\ref{xe1.1}) with initial data $y_0^*$ and with the following feedback control
$$u(x,t):=\left\{\begin{array}{ll}-\mathds{1}_\omega^*P(t)({y}(t)-\widetilde{y}_0)(x)-\Delta \widetilde{y}_0(x),\ (x,t)\in \Omega\times  [0,T-T_1-\widehat{\varepsilon}),
\\
0,\ \ \ \ \ \ \ \ \ \ \ \ \ \ \ \ \ \ \ \ \ \ \ \ \ \ \ \ \ \ \ \ \ \ \ \ \ \ \ \ \ \ \ \ \ \ \ \ \ \ \ \  \   (x,t)\in \Omega\times  [T-T_1-\widehat{\varepsilon},T-T_1-\widehat{\varepsilon}+\widehat{\varepsilon}_1),\\
|y|^{p-1}y(x,t), \ \ \ \ \ \ \ \ \ \ \ \ \ \ \ \ \ \ \ \ \ \ \ \ \ \ \ \ \ \ \ \ \ \ (x,t)\in \Omega\times [T-T_1-\widehat{\varepsilon}+\widehat{\varepsilon}_1

,T^*)
\end{array}\right.$$
 blows up in time $T^*$ and
has  unique blowup point $a^*$, with $|T^*-T|<\varepsilon$ and $|a^*-a|<\varepsilon$,  where $p>1$.
Here, $P\in C_S\big([0,T-T_1); \Sigma^+(H)\big)$ is the unique mild solution to  Riccati system (\ref{xe1.2}).

\medskip

Moreover, for all $R>0$,
 \begin{eqnarray}\label{app1}
\sup\limits_{\big\{|x-a^*|\leq R
\sqrt{(T^*-t)|\log(T^*-t)|}\big\}}\left|(T^*-t)^{\frac{1}{p-1}}y(x,t)-f\left(\frac{x-a^*}{\sqrt{(T^*-t)
|\log(T^*-t)|}}\right)\right|\rightarrow 0
\end{eqnarray}
as $t\rightarrow T^*$, where
$f$ is defined in (\ref{fnew}).
\end{theorem}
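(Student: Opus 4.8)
The plan is to decompose the feedback strategy into three temporal phases and track the accumulation of errors through each, relying on a continuous dependence argument plus the stability of the blowup construction from Proposition \ref{prpo1.1}. First I would fix $\varepsilon>0$ and choose $\widehat{\varepsilon}_1$ small; this parameter controls the length of the ``free evolution'' window on which we simply let the linear heat semigroup act with no control. The role of this middle phase is to absorb the perturbation: since the solution to the linear heat equation depends continuously (indeed analytically) on its initial data in $H_0^1(\Omega)$ on any compact time interval, a small error in the state at time $T-T_1-\widehat{\varepsilon}$ remains small at time $T-T_1-\widehat{\varepsilon}+\widehat{\varepsilon}_1$.

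Next I would analyze Phase 1, the Riccati feedback window $[0,T-T_1-\widehat{\varepsilon})$. The key point here is that the map sending a cutoff time $T'\in(0,T-T_1)$ and an initial datum $y_0^*\in H_0^1(\Omega)$ to the state $y(T')$ of the closed-loop system governed by $u=-\mathds{1}_\omega^*P(t)(y(t)-\widetilde{y}_0)-\Delta\widetilde{y}_0$ is continuous on the region where $P$ is bounded, i.e.\ for $T'$ in a compact subinterval of $[0,T-T_1)$. At $T'=T-T_1$ (in the limit, using the boundary condition in \eqref{xe1.2}) the closed-loop state reaches exactly $\widetilde{y}_0$, as established in the Linear Control Argument behind Proposition \ref{prpo1.1}. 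Hence by continuity there is a whole small interval of cutoff times $\widehat{\varepsilon}\in(\delta-\delta^*,\delta+\delta^*)$ near $0^+$ (equivalently $T'=T-T_1-\widehat{\varepsilon}$ near $T-T_1$) and a radius $\varepsilon^*>0$ such that, whenever $\|y_0^*-y_0\|_{H_0^1(\Omega)}<\varepsilon^*$, the state $y(T-T_1-\widehat{\varepsilon})$ lies in an arbitrarily small $H_0^1$-ball around $\widetilde{y}_0$. Composing with the continuity of the free evolution from the previous paragraph, I obtain that the state $z_0:=y(T-T_1-\widehat{\varepsilon}+\widehat{\varepsilon}_1)$ at the start of Phase 3 is as $H_0^1$-close to $\widetilde{y}_0$ (or to its heat evolution over time $\widehat{\varepsilon}_1$, which is again close to $\widetilde{y}_0$) as desired.

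For Phase 3 — the nonlinear window with feedback $u=|y|^{p-1}y$ starting from $z_0$ — I would invoke the stability of the blowup construction. The Nonlinear Construction Argument underlying Proposition \ref{prpo1.1} produces, for data equal to $\widetilde{y}_0$, a solution blowing up at time $T$ at the single point $a$ with profile $f$. That construction is of the Bricmont–Kenig–Merle / Merle–Zaag type, and such constructions are \emph{stable}: there is a finite-codimensional (in fact finite-dimensional modulo the geometric symmetries) manifold of initial data leading to blowup with the same profile, and near $\widetilde{y}_0$ a small $H_0^1$ (indeed even $L^\infty$ or weighted) perturbation only shifts the blowup time and blowup point by a small amount, keeping the profile $f$ intact after the self-similar rescaling. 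Quantitatively: there exist $\rho>0$ and a modulus of continuity so that if $\|z_0-\widetilde{y}_0\|_{H_0^1(\Omega)}<\rho$ then the solution of the localized-nonlinearity problem starting from $z_0$ at time $T-T_1-\widehat{\varepsilon}+\widehat{\varepsilon}_1$ blows up at some $T^*$ with $|T^*-T|$ small and at a unique point $a^*$ with $|a^*-a|$ small, and satisfies the profile convergence \eqref{app1}. Chaining the smallness requirements backwards — choose $\rho$ from Phase 3, then $\widehat{\varepsilon}_1$ and $\varepsilon^*$ and the interval $(\delta-\delta^*,\delta+\delta^*)$ from Phases 1–2 to guarantee $\|z_0-\widetilde{y}_0\|_{H_0^1}<\rho$ — and arranging all three discrepancies to be $<\varepsilon/4$ (hence the total $<\varepsilon$, with room to spare for the $\widehat{\varepsilon}_1<\varepsilon/4$ contribution to the time shift) completes the argument.

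The main obstacle I expect is making the Phase 3 stability statement rigorous and quantitative rather than merely qualitative: one must show not just that nearby data still blow up, but that they blow up at a \emph{unique} point close to $a$ with the \emph{prescribed profile}, and one needs an explicit dependence of $|T^*-T|$ and $|a^*-a|$ on $\|z_0-\widetilde{y}_0\|$. This requires revisiting the fixed-point / topological-shooting scheme of the construction and checking that its output depends continuously on the initial datum in the relevant norm — including uniform control of the remainder in the similarity variables, so that no secondary blowup point can appear and the single-point blowup with profile $f$ persists. A secondary technical point is the behaviour of the Riccati operator $P(t)$ as $t\uparrow T-T_1$: although $P\in C_S([0,T-T_1);\Sigma^+(H))$ gives boundedness on each compact subinterval, one must verify that the closed-loop state $y(T')$ genuinely converges to $\widetilde{y}_0$ as $T'\uparrow T-T_1$ in the $H_0^1$ norm (not merely weakly or in $L^2$), which is exactly the content inherited from the Linear Control Argument of \cite{LinZaag} and which I would cite precisely.
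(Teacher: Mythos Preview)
Your three-phase decomposition and the overall continuity-plus-stability strategy match the paper's approach, but there is a genuine gap in the norm you track through Phase~2 and into Phase~3.

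You treat Phase~2 (zero control) as a device ``to absorb the perturbation'' and measure everything in $H_0^1(\Omega)$. In the paper, Phase~2 has a different and essential role: it is a \emph{regularity-enhancement} step. The stability result for the blowup construction (Theorem~\ref{Pros1.2}) is \emph{not} stated in $H_0^1$; it requires the perturbed initial datum to be close to $\widehat{y}_0=\widetilde{y}_0$ in $W_0^{1,q}(\Omega)\cap W^{2,q}(\Omega)$ with $q>n+2$. At the end of Phase~1 the state $y(T-T_1-\widehat{\varepsilon})$ lies only in $H_0^1(\Omega)$, so you cannot feed it directly into a $W^{2,q}$-stability statement. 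The paper therefore runs the free heat semigroup for time $\widehat{\varepsilon}_1>0$ and uses the smoothing estimate
\[
\|S(t)\zeta\|_{W^{2,q}(\Omega)}\le C\Big(1+\frac{2}{t}\Big)\Big(\frac{t}{2}\Big)^{-n/4}\|\zeta\|_{H_0^1(\Omega)}
\]
to upgrade the $H_0^1$-closeness at time $T-T_1-\widehat{\varepsilon}$ into $W^{2,q}\cap W_0^{1,q}$-closeness at time $T-T_1-\widehat{\varepsilon}+\widehat{\varepsilon}_1$; only then does Theorem~\ref{Pros1.2} apply. Your sentence ``a small $H_0^1$ (indeed even $L^\infty$ or weighted) perturbation only shifts the blowup time and blowup point'' is exactly where the argument would fail as written: the Merle--Zaag type stability machinery here needs pointwise (indeed $C^1$-type) control of the perturbation in similarity variables, which is why the paper insists on $q>n+2$ so as to embed into H\"older spaces. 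If you correct this --- state the smoothing estimate, choose $\widehat{\varepsilon}_1$ first so that $\|S(\widehat{\varepsilon}_1)\widetilde{y}_0-\widetilde{y}_0\|_{W^{2,q}}<\varepsilon_1/2$, and then choose $\delta,\delta^*,\varepsilon^*$ so that the $H_0^1$-error at the end of Phase~1 is small enough to survive multiplication by the smoothing constant $C(\widehat{\varepsilon}_1)$ --- your proposal becomes the paper's proof.
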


\medskip

From the perspective of the purpose of control, the target of feedback controllability problem for blowup points
is ``infinity",  which is outside the state space
of the  solutions, while the targets of classical controllability problems (see for
instance \cite{Petisco}, \cite{zuazuax1}, \cite{F1}-\cite{Doubova},
\cite{zuazuax2}-\cite{Fernandez-Cara}, \cite{zuazuax3}, \cite{Kassab}, \cite{Labb}, \cite{Le}, \cite{zuazuax4}-\cite{Sirbu}, \cite{zhangx1}, \cite{zhangx2}, \cite{Zuazuab1}, \cite{Zuazuab2})
%-\cite{Zuazuab3})
 are within the state space.
 On the other hand, feedback control may be considered as
 %can form
 a closed-loop system which
 %and
 plays an effective role of control.

\medskip

 % Its characteristics are: to make timely response to the objective effect caused by each
 % step of the implementation process of the plan decision, and accordingly adjust and modify the next step of the implementation plan, so that the implementation of the plan decision and the original plan itself can achieve dynamic coordination.

In the past twenty years, several references studied the controllability of equations in the context of
blowup.
% In recent years, the controllability of equations with the property of
% blowup attracted many people's interest.
Doubova, Fern\'{a}ndez-Cara, Gonz\'{a}lez-Burgos and Zuazua
\cite{Doubova}, together with Fern\'{a}ndez-Cara and Zuazua \cite{Fernandez-Cara} considered the controllability of weakly blowing up semilinear parabolic
equations with open-loop controls. They showed that the considered systems are null and approximately controllable at any time. One can also see the recent works by B\'{a}rcena-Petisco \cite{Petisco}, Kassab \cite{Kassab} and
Le Balc'h \cite{Le}.
In those papers, blowup occurs if no control is applied. Meanwhile,
using appropriate controls, the solution can be steered to zero or as
near as possible to some given target in the state space at any given
time. In other words, the aim of those references was to prevent blowup
thanks to controls, which is the opposite to our purpose of feedback controllability
 problem for blowup points, where we aim at making the solution blow up.

% In those references, blowup  occurs in the absence of control; While
% the solution can be steered to be zero or to be sufficiently approximate to a given target in the state space at given time
%  by using controls.
% Their aim  is to prevent blowup
% by controls, which is totally different with the intention of making
% the solution blow up in this paper.

\medskip

Recently, several references concerned the study of
feedback blowup controllability.  In addition to \cite{LinZaag},  Lin \cite{Lin} considered the blowup
controllability of the  heat equation with feedback controls. As a
matter of fact, Lin proved
%in \cite{Lin}
that for any initial data in $H^{1}_{0}(\Omega)$  and for any time $T>0,$ there exist a number $p$ with $p\in (1,\infty)$ and a feedback control acting on an internal
subset of the space domain such that the $L^{p+1}$ norm of the
solution to (\ref{xe1.1}) blows up in $T.$ Later, Han, Liu and Lin
\cite{Lin2}  studied  the ordinary differential system $y'(t)=Ay(t)+Bu(t)$ in the case where $(A,B)$ is null controllable,
 $A$ and $B$ being time independent matrices. They first concluded that this ode system is exactly blowup controllable with feedback control. Furthermore, they showed that it is also
 approximately blowup controllable with the feedback operator bounded at any time before blowup.

\medskip

In some particular situations, one may need a
 sharper target for blowup. This is the case for instance in the
 mining process, where one may need blowup to occur at a specific
 space location in some given time.  The problem of blowup point controllability considered in this paper and \cite{LinZaag}
 could meet this need.

\medskip

The idea of the proof of our  main result in Theorem \ref{Main Tho} is as follows.

\medskip

Consider the following auxiliary system,
\begin{align}\label{profile1}
\left\{\begin{array}{ll} y_t-\Delta
y=\mathds{1}_\omega|y|^{p-1}y, \ \ \ \  x\in \Omega,\ t>0,\\
y=0,\ \ \ \ \ \ \ \ \ \ \ \ \ \ \ \ \ \ \
\ \ \ \ \ \ \ \ \ \ \  x\in \partial\Omega, \  t>0,\\
\displaystyle  y(x,0)=y_0(x),\ \ \ \ \ \ \ \ \ \ \ \ \ \  x\in
\Omega,
\end{array}\right.
\end{align}
where $p>1$ is arbitrary but fixed.

\medskip

First, we will remake the proof of our result on constructing a blowup solution for system \eqref{profile1}  with a prescribed profile,
i.e., Theorem 1 in our paper \cite{LinZaag}, which
is the following proposition. This plays an important role in establishing our main result.

\begin{proposition}\label{lemma1.3} For any $a\in \omega$, there exists
  $T_0>0$ such that for any $T\in (0,T_0)$, there exists some initial
  data $y_0\in C_0^\infty(\omega)$ such that the corresponding solution
$y$  to  (\ref{profile1}) exists on $[0,T)$, $T$ is the blowup time of $y$ and $y$
has a  unique blowup point $a$. Moreover, for all $R>0$,
 \begin{eqnarray}\label{1.3*}
\sup\limits_{\big\{|x-a|\leq R
\sqrt{(T-t)|\log(T-t)|}\big\}}\left|(T-t)^{\frac{1}{p-1}}y(x,t)-f\left(\frac{x-a}{\sqrt{(T-t)|\log(T-t)|}}\right)\right|\rightarrow 0
\end{eqnarray}
as $t\rightarrow T$, where
$f$ is defined in (\ref{fnew}).
\end{proposition}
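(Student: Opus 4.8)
The plan is to adapt the now-classical construction of Bricmont--Kupiainen and Merle--Zaag (as already carried out in \cite{LinZaag}) to the localized nonlinearity $\mathds{1}_\omega|y|^{p-1}y$, taking advantage of the hypothesis $a\in\omega$. First I would fix the blowup point $a$ and, after translating, assume $a=0$, so that a fixed ball $B(0,2r_0)\subset\omega$ for some $r_0>0$. The idea is that near the prescribed blowup point the localized equation \eqref{profile1} coincides exactly with the standard semilinear heat equation $y_t-\Delta y=|y|^{p-1}y$, so the inner (self-similar variable) analysis is unchanged; the cutoff only affects the region away from $0$, where the solution will stay uniformly bounded. Concretely, I would introduce the self-similar variables $w(\xi,s)=(T-t)^{1/(p-1)}y(a+\xi\sqrt{T-t},t)$, $s=-\log(T-t)$, for which $w$ solves (for $|\xi|\le r_0 e^{s/2}$, i.e. eventually on any fixed ball once $s$ is large) the equation $w_s=\Delta w-\tfrac12\xi\cdot\nabla w-\tfrac{w}{p-1}+|w|^{p-1}w$, and look for a solution that converges to the profile $\varphi(\xi)=f(\xi/\sqrt s)$ plus lower-order corrections $\kappa/(2ps)$, exactly as in the standard construction.

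The key steps, in order, would be: (1) set up the decomposition of $w-\varphi$ on its eigenspace components relative to the linear operator $\mathcal{L}=\Delta-\tfrac12\xi\cdot\nabla+1$ (two positive modes, a null mode behaving like $1/s$, and a negative part), and define the shrinking set $V_A(s)$ trapping these components at the right rates; (2) reduce, via a topological (Brouwer / Wazewski) argument, the construction to a finite-dimensional problem on the non-negative modes, controlling the initial data through a two-parameter family $(d_0,d_1)$; (3) verify the a priori estimates that make the shrinking set invariant, which is where the localization enters — one must show that the truncation term $(\mathds{1}_\omega-1)|y|^{p-1}y$, supported in $\{|\xi|\ge r_0 e^{s/2}\}$, produces only exponentially small contributions to every mode (this uses the Gaussian weight $e^{-|\xi|^2/4}$ of the underlying measure, so that integrating over $|\xi|\gtrsim e^{s/2}$ gives $O(e^{-ce^{s}})$), and similarly that outside the parabolic region the solution remains bounded by parabolic regularity applied to a linear heat equation with bounded right-hand side; (4) from the resulting control $\|w(\cdot,s)-\varphi(\cdot,s)\|_{L^\infty}\to 0$, deduce the existence of the blowup time $T$, the profile convergence \eqref{1.3*}, and — through the standard no-blowup criterion away from $0$ (the solution stays bounded on $\{|x-a|\ge \eta\}$ for every $\eta>0$, again by parabolic estimates since the nonlinearity is controlled there) — the fact that $a$ is the \emph{unique} blowup point. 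Finally, smoothness and compact support of the constructed initial data $y_0$ in $\omega$ follow because one prescribes $y_0$ explicitly: it is the profile at time $s_0=-\log T$ truncated by a smooth cutoff supported in $B(0,r_0)\subset\omega$ (the truncation is again harmless for $s_0$ large, i.e. $T$ small, which is exactly why $T_0$ must be chosen small).

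The main obstacle I expect is step (3) in its interaction with the localization: in the unlocalized problem the equation for $w$ holds on all of $\mathbb{R}^n\times[s_0,\infty)$, but here it holds only on the expanding ball $|\xi|\le r_0 e^{s/2}$, so one must simultaneously (a) control the error terms coming from the cutoff near the boundary of this ball and (b) propagate an $L^\infty$ bound on $y$ in the complementary (outer) region and patch the two analyses together consistently — the delicate point being to ensure the outer bound does not degrade as $t\to T$ and does not feed back a non-negligible source into the inner modes. This is precisely the step where the hypothesis $a\in\omega$ (rather than $a\in\partial\omega$ or $a\notin\overline\omega$) is essential, since it guarantees a fixed neighborhood of $a$ on which the truncation is inactive. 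Everything else is a routine, if lengthy, transcription of the Bricmont--Kupiainen/Merle--Zaag machinery, which is why I would present step (3) in detail and merely indicate the rest.
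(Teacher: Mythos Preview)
Your proposal is correct and follows essentially the same route as the paper: similarity variables around $a$, decomposition of $w-\varphi$ on the spectrum of $\mathcal{L}$, a shrinking set, a two-parameter family of initial data and a Brouwer-type topological argument for the positive modes, exponentially small cutoff error from the Gaussian weight, and Giga--Kohn's no-blowup criterion for uniqueness of the blowup point. The paper organizes your step~(3) exactly as you anticipate, by enlarging the shrinking set to include an $L^\infty$ bound on $y$ in the outer region $\{|x-a|\ge \mu\varepsilon_0\}$ and proving separate a~priori estimates in an intermediate ``bridge'' annulus and in the far region, so your identification of the main difficulty and its resolution are both on target.
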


Just as we did in the proof of Theorem 1 in \cite{LinZaag}, thanks to some cut-off function around
$a\in \omega$, we recover the $\mathbb{R}^n$ case, however,
with cut-off terms:
\begin{align}
 y_t-\Delta y=|y|^{p-1}y + \mbox{ cut-off terms, with } (x,t)\in \m R^n\times[0,T).\nonumber
  \end{align}
Then, we will use similar techniques to Mahmoudi, Nouaili and  Zaag \cite{Zaag2}
to prove that for any $a\in \omega$, there exists $T_0>0$ such that for any $T\in (0,T_0)$, one can find special initial data  such
that the corresponding  solution   blows up in time $T$ at unique blowup
point $a$, and with the prescribed blowup profile \eqref{1.3*}. The
control of the solution near that profile will be different, according
to whether we are in the ``blowup region'' or the ``regular region'':

% In our
% strategy, we will  will be divided into two parts: the blowup region and the regular region.

\medskip

$\bullet$ In the blowup region, near the blowup point, we reduce the question to a finite-dimensional problem. Similarity variables will be used to control the solution near the profile.

\medskip

$\bullet$ In the regular region, far from the blowup point, we directly use standard parabolic estimates.

\medskip

Note that we proceed by contradiction to solve the finite-dimensional problem
and complete the proof of that step thanks to a topological argument.

\medskip

However, in subsection 2.1, we will reformulate the problem and give a modified definition of shrinking set and some preliminary lemmas, which are small adaptations of those used in the proof of
Theorem 1 in  \cite{LinZaag}. The order in which they appear in the proof will also change. The adaptations and the change of order would make
these lemmas  not only apply to construct a blowup solution corresponding to special initial data we chose in \cite{LinZaag}, but
also to initial data in a small neighborhood of it. This will play an important role in proving our main result of this paper. By these lemmas, we will follow the same techniques used in \cite{LinZaag} to prove
Proposition \ref{lemma1.3} (see subsection 2.2).

\medskip

Then, we proceed prove our main result in the following two steps.

\medskip

In the first step, we will prove a stability result of the blowup profile for  system \eqref{profile1}.

\medskip

The following theorem  gives a stability result of the blowup profile with respect to initial data for system (\ref{profile1}), which will be used to prove Theorem \ref{Main Tho}.

%Let $a\in \omega$ and $T>0$.
\begin{theorem}\label{Pros1.2} Let   $\widehat{y}$ be the constructed solution in Proposition \ref{lemma1.3}. $\widehat{y}_0$ is its initial data in $C_0^\infty(\omega)$, $\widehat{T}$ is its blowup time and $\widehat{a}\in \omega$
is its unique blowup point. Then for any $\varepsilon>0$, there exists  $\varepsilon_1>0$,   such that for  any initial data $y_0\in W_0^{1,q}(\Omega)\bigcap W^{2,q}(\Omega)$  with $q>n+2$ satisfying $\|y_0-\widehat{y}_0\|_{W_0^{1,q}(\Omega)\bigcap W^{2,q}(\Omega)}<\varepsilon_1$, the corresponding solution
$y$ to system (\ref{profile1}) blows up in finite time $T(y_0)$ and at a unique blowup point $a(y_0)$ with
$$|T(y_0)-\widehat{T}|<\varepsilon,\ |a(y_0)-\widehat{a}|<\varepsilon.$$

 Moreover, for all $R>0$,
 \begin{eqnarray}\label{1.3x}
\sup\limits_{\big\{|x-a(y_0)|\leq R
\sqrt{(T(y_0)-t)|\log(T(y_0)-t)|}\big\}}&\Big|(T(y_0)-t)^{\frac{1}{p-1}}{y}(x,t)\nonumber\\&-f\Big(\frac{x-a(y_0)}{\sqrt{(T(y_0)-t)|
\log(T(y_0)-t)|}}\Big)\Big|\rightarrow 0,
\end{eqnarray}
as $t\rightarrow T(y_0)$, where
$f$ is defined in (\ref{fnew}).
\end{theorem}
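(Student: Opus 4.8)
The plan is to show that the finite-dimensional reduction and topological (Brouwer-degree) argument underlying Proposition~\ref{lemma1.3} is \emph{robust} under small perturbations of the initial data, the role of free parameters being played, for a \emph{given} perturbed datum, by the candidate blowup time $T$ and the candidate blowup point $a$. First I would reduce to $\mathbb R^n$ exactly as in the proof of Proposition~\ref{lemma1.3}: pick $\varepsilon$ so small that $\overline{B(\widehat a,3\varepsilon)}\subset\omega$, multiply the equation by a cut-off supported near $\widehat a$, and obtain a problem of the form $v_t-\Delta v=|v|^{p-1}v+\mbox{(cut-off terms)}$ on $\mathbb R^n\times[0,T)$, the cut-off terms being exponentially small in the self-similar scale as long as the solution stays away from the candidate blowup point. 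For parameters $(a,T)$ ranging over the box $\mathcal B_\varepsilon:=\{\,|a-\widehat a|\le\varepsilon/2\,\}\times\{\,|T-\widehat T|\le\varepsilon/2\,\}$, introduce the similarity variables $w_{a,T}(z,s)=(T-t)^{1/(p-1)}v\big(a+z\sqrt{T-t},t\big)$ with $s=-\log(T-t)$, so that $w_{a,T}$ solves the standard rescaled equation whose linearized operator $\mathcal L=\Delta-\tfrac12\,z\cdot\nabla+1$ has the $(n+1)$ expanding eigenmodes $\{1,z_1,\dots,z_n\}$, the neutral quadratic modes $\{z_iz_j-2\delta_{ij}\}$, and a stable part. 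Then I would invoke the reformulated definition of the shrinking set $V_A(s)$ and the preliminary lemmas of subsection~2.1, which are stated so that the trapping estimates depend only on $A$ and on the distance, at a fixed interior time, of the solution to the reference solution $\widehat y$ — not on the specific reference datum $\widehat y_0$.

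The core step is the \emph{perturbed topological argument}. By local well-posedness and continuous dependence in $W_0^{1,q}(\Omega)\cap W^{2,q}(\Omega)$ with $q>n+2$ (which gives $C^1$-in-space bounds and continuity of the data-to-solution map in that topology), for any fixed small $\tau\in(0,\widehat T)$ the solution $y$ of (\ref{profile1}) with datum $y_0$ satisfies $\|y(\cdot,\widehat T-\tau)-\widehat y(\cdot,\widehat T-\tau)\|_{C^1(\overline\Omega)}\to0$ as $\varepsilon_1\to0$; fix the entry time $s_0:=-\log\tau$. One then shows that the map $(a,T)\in\mathcal B_\varepsilon\mapsto\Pi_{n+1}\big(w_{a,T}(\cdot,s_0)\big)\in\mathbb R^{\,n+1}$, where $\Pi_{n+1}$ projects onto $\mathrm{span}\{1,z_1,\dots,z_n\}$, is, for $\varepsilon_1$ small, a $C^0$-small perturbation of the corresponding map attached to $\widehat y$; hence it inherits the same nonzero degree / strictly-outgoing-on-$\partial\mathcal B_\varepsilon$ property that made the construction work in Proposition~\ref{lemma1.3}.

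Consequently there exist $a(y_0)\in B(\widehat a,\varepsilon/2)$ and $T(y_0)\in(\widehat T-\varepsilon/2,\widehat T+\varepsilon/2)$ such that $w_{a(y_0),T(y_0)}(\cdot,s)$ stays in $V_A(s)$ for every $s\ge s_0$. As in the proof of Proposition~\ref{lemma1.3}, being trapped in $V_A(s)$ for all $s$ forces $T(y_0)$ to be the genuine blowup time, $a(y_0)$ to be the unique blowup point (with $|T(y_0)-\widehat T|<\varepsilon$ and $|a(y_0)-\widehat a|<\varepsilon$ since $(a(y_0),T(y_0))\in\mathcal B_\varepsilon$), and yields the profile convergence (\ref{1.3x}) after translating the shrinking-set estimates back to the original variables, standard parabolic regularity handling the outer region $|x-a(y_0)|\gtrsim\sqrt{(T(y_0)-t)|\log(T(y_0)-t)|}$.

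The hard part is making the reduction to the finite-dimensional problem \emph{uniform in the datum}: one must check that restarting the flow from the perturbed datum $y_0$ lands, at the fixed time $s_0$, inside the basin where the lemmas of subsection~2.1 apply, and — more delicately — that the transversality used in the degree argument (the flow on $\partial V_A$ being strictly outgoing along the $(n+1)$ expanding modes) survives the $C^1$-small perturbation of the datum together with the resulting small change of the cut-off terms. This is precisely why the lemmas of subsection~2.1 must be re-proved with explicit dependence on the datum and in a rearranged order, so that the Brouwer/degree step only ``sees'' the $(n+1)$-dimensional expanding-mode data, which depends continuously on $(a,T,y_0)$, while the infinitely many stable directions are absorbed by $V_A(s)$ uniformly.
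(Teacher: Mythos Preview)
Your strategy matches the paper's: use $(T,a)$ as the free parameters replacing the $(d_0,d_1)$ of Proposition~\ref{lemma1.3}, show that at a large fixed similarity time $s_0$ the perturbed solution lands in the shrinking set with the expanding components $(\overline\psi_0,\overline\psi_1)$ ranging over a full square as $(T,a)$ varies, and then rerun the topological argument and the lemmas of subsection~2.1. The paper carries this out through Proposition~3.1 and the expansion Lemmas~3.3--3.4.

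There is one point where your reasoning needs tightening. You write that the map $(a,T)\mapsto\Pi_{n+1}\big(w_{a,T}(\cdot,s_0)\big)$ ``inherits the same nonzero degree \dots\ that made the construction work in Proposition~\ref{lemma1.3}.'' But Proposition~\ref{lemma1.3} never computed the degree of a $(T,a)$-map; its topological argument ran over the artificial parameters $(d_0,d_1)$ in the initial datum. The nonzero degree of the $(T,a)$-map attached to $\widehat y$ is a \emph{new} computation, not an inheritance: one must show that, to leading order, $\overline\psi_0\sim \widehat q_0(s_0)+\frac{\kappa}{p-1}\tau$ and $\overline\psi_1\sim \widehat q_1(s_0)-\frac{2b\kappa}{(p-1)^2}\frac{\alpha}{s_0}$, where $\tau=(T-\widehat T)e^{s_0}$, $\alpha=(a-\widehat a)e^{s_0/2}$. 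This requires careful Taylor expansions of $\varphi(z\sqrt{1+\tau}+\alpha,s_0)-\varphi(z,\sigma_0)$ and of the cut-off corrections (the paper's Lemma~3.4), together with the a~priori bound $\widehat q(s_0)\in V_{K_0,\widehat A}(s_0)$, and it is here that the anisotropic scaling of the parameter box $\big(|\tau|\lesssim s_0^{-2},\ |\alpha|\lesssim s_0^{-1}\big)$ emerges, as in \eqref{dbar}. Only after this Jacobian computation does your perturbation argument (replacing $\widehat y$ by $y$ at time $\widehat T-e^{-s_0}$ via continuous dependence) close. Everything else in your outline is correct and parallels the paper.
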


Theorem \ref{Pros1.2} shows the stability of the constructed solution in Proposition \ref{lemma1.3}, with respect to perturbations in initial data.
By the similar techniques used in \cite{LinZaag} and \cite{TZtams19}, it could be proved by using geometrical interpretation of the finite-dimensional parameters, reduction to a finite-dimensional problem and continuity (see details in Section 3).

\medskip

%The techniques to prove our main result Theorem \ref{Main Tho}  can be described as the following two steps.

 In the second step, let  $a\in \omega$ and $T>0$.   Let $T_1\in (0, T/2)$ and $\widetilde{y}_0\in C_0^\infty(\omega)$ be given in Proposition \ref{prpo1.1}. Let $y_0\in H_0^1(\Omega)$. Recall that in Linear Control Argument, we connect the given initial state $y_0$ to  initial data ${\widetilde{y}}_0$
of the Construction Step in nonlinear argument at time $T-T_1$ by the feedback control $u(\cdot,t)=-\mathds{1}_\omega^*P(t)({y}(t)-\widetilde{y}_0\big)(\cdot)-\Delta \widetilde{y}_0(\cdot), \ t\in [0,T-T_1]$.
However, by the second equality of (\ref{xe1.2}), $P(\cdot)$ is indeed an unbounded operator.

\medskip

Since $P\in C_S([0,T-T_1); \Sigma^+(H))$, we could show
  that for initial data  $y_0^*\in H_0^1(\Omega)$ with $\|y_0^*-y_0\|_{H_0^1(\Omega)}$ small enough, the corresponding solution $y$ with the feedback control $u(\cdot,t)=-\mathds{1}_\omega^*P(t)({y}(t)-\widetilde{y}_0\big)(\cdot)-\Delta \widetilde{y}_0(\cdot), \ t\in [0,T']$ satisfies  $y(T')$ could be close enough to $\widetilde{y}_0$ in ${H_0^1(\Omega)}$, where $T'\in (0,T-T_1)$ could be any time in a sufficiently small interval  very close to  $T-T_1$.

\medskip

Furthermore, we could take 0 control to enhance the regularity of the solution $y$ in the interval of $(T',T'+\widehat{\varepsilon}_1]$,
where we take  $\widehat{\varepsilon}_1$  small enough to ensure that $y(T'+\widehat{\varepsilon}_1)$ could be close enough to $\widetilde{y}_0$ in $W_0^{1,q}(\Omega)\bigcap W^{2,q}(\Omega)$ with $q>n+2$.

\medskip

Finally, by Theorem \ref{Pros1.2},  we could complete the proof of Theorem \ref{Main Tho} (see details in Section 4).

 The rest of this paper is structured as follows. In Section
2, we will reconstruct a blowup solution with a prescribed profile. Section 3 will prove  a stability result of the blowup profile with respect to initial data for system (\ref{profile1}), i.e., Theorem \ref{Pros1.2}.
In section 4, we will give the proof of our main result.

\section{Reconstructing a blowup solution with a prescribed profile}

In this section, we will remake the proof of our result on constructing a blowup solution with a prescribed profile, i.e., Theorem 1 in our paper \cite{LinZaag}, which
is Proposition \ref{lemma1.3} in this paper. This plays an important role in establishing our main result. Some arguments will repeat those written in \cite{LinZaag}
 for the convenience of the readers and for the completeness of this paper.

\medskip

Throughout this paper, we will give the proof
when $n=1$, for simplicity, bearing in mind that the proof for $n\geq2$ is the same, with natural adaptations, as one can see from  the paper \cite{Nguyen2} done for the standard heat equation.

\subsection{Formulation of the problem and preliminary lemmas}

Consider some $a\in \omega$ and $T>0$. The aim of Proposition \ref{lemma1.3} is to show that for $T$
small enough, there is some $y_0\in C^\infty_0(\omega)$ such that
equation \eqref{profile1} fitted with initial data $y_0$ has a solution $y(x,t)$
defined for all $(x,t)\in \Omega \times [0,T)$ such that $y$ blows up
at time $T$ only at the point $a$, with the behavior described in
\eqref{1.3*}.

\medskip

Let $\chi_0\in C_0^\infty(\mathbb{R},[0,1])$ with
\begin{align}\label{1.7}\chi_0(\xi)=\left\{\begin{array}{ll} 1, \ |\xi|\leq 1,\\0, \ |\xi|\geq 2.
                                           \end{array}\right.\end{align}

In the regular region, we define $\overline{y}$ by
\begin{align}\label{ybar}\overline{y}(x,t)=& y(x,t)\overline{\chi}(x),\ x\in \Omega,\ t\geq 0,\end{align}
where for any $\xi\in \Omega$,
$\overline{\chi}(\xi)=1-\chi_0\big(\frac{4(\xi-a)}{\varepsilon_0}\big)$,
for some $\varepsilon_0>0$ will be fixed sufficiently small  later.
Then,  $\overline{y}$ satisfies the following equation:
\begin{align}\label{1.8}
\partial_t \overline{y}=\partial_{xx}\overline{y}+\mathds{1}_\omega|y|^{p-1}\overline{y}-2\overline{\chi}'\partial_x y-\overline{\chi}{'}{'}y.
\end{align}

In the blowup region, we make the following similarity variables transformation:
\begin{align}\label{W1}
W(z,s)=(T-t)^{1/(p-1)} y(x,t)=e^{-s/(p-1)}y(a+e^{-s/2}z,T-e^{-s})
\end{align}
with
\begin{align}\label{W1j}x-a=(T-t)^{1/2}z,\ T-t=e^{-s}. \end{align}
This transformation was first introduced by Hocking  and Stewartson \cite{Hocking} in the context of the Complex
      Ginzburg-Landau (CGL), which includes the semilinear heat
      equation as a special case. Later, it was used by Giga and Kohn in
      \cite{GKcpam85} to derive the well celebrated blowup results
      for the latter equation. Following \eqref{W1}, we see that
       $W(z,s)$ satisfies the following
equation in $(\Omega-a) e^{s/2}\times[s_0, \infty)$, with $s_0=-\log T$:
\begin{align}\label{1.11}
\partial_s W=\partial_z^2 W-\frac{1}{2}z\partial_z W-\frac{1}{p-1}W
+|W|^{p-1}W+(\mathds{1}_\omega-1)|W|^{p-1}W.
\end{align}

For all $z\in \mathbb{R}$, we define
\begin{align}\label{w}w(z,s)=\left\{\begin{array}{ll} W(z,s)\chi(z,s), \ z\in (\Omega-a) e^{s/2},\\0, \ \ \ \ \ \ \ \ \ \ \ \ \ \ \ \ \ \ \ \ \  \ \ z\in \mathbb{R}\backslash
(\Omega-a) e^{s/2},
\end{array}\right.\end{align}
with
\begin{align}\label{chi}\chi(z,s)=\chi_0\Big(\frac{ze^{-s/2}}{\varepsilon_0}\Big),\end{align}
where
\begin{equation}\label{inside}
(a-4\varepsilon_0,a+4\varepsilon_0)\subset \omega,
\end{equation}
  and $\varepsilon_0>0$ will be fixed small enough later.
%
%with $(a-2\varepsilon_0,a+2\varepsilon_0)\subset \omega$ will be fixed small enough later.

\medskip

We see that whenever
%
% from which and the fact $(a-2\varepsilon_0,a+2\varepsilon_0)\subset \omega$,  it holds
% that if the
initial data $y_0\in C_0^\infty(\omega)$  and $y$ exists on $[0,T)$,
it follows by the internal regularity of the heat
   equation that $w$ is in the space
   $C^{2,1}(\mathbb{R}\times[s_0,+\infty))$
   with $s_0=-\log T$.

   \medskip

It can be easily obtained that
\begin{align}\label{w1jia}\partial_s w=\partial_z^2 w-\frac{1}{2}z\partial_z w-\frac{1}{p-1}w
+|w|^{p-1}w+N(z,s),\ z\in \mathbb{R},\ s\geq-\log T,\end{align}
where
\begin{align}N(z,s)=\left\{\begin{array}{ll} W\partial_s\chi-2\partial_zW\partial_z\chi-W\partial_z^2\chi+\frac{1}{2}zW\partial_z\chi\\+|W|^{p-1}W(\chi-\chi^p),\   \ \ \ \ z\in (\Omega-a) e^{s/2},\ \ s\geq-\log T,\\0,\ \ \ \ \ \ \ \ \ \ \ \ \ \ \ \ \ \ \ \ \ \ \ \ \ \ \ \ \ \ \ \ \ \ \ \ \ \  z\in \mathbb{R}\setminus (\Omega-a)e^{s/2}, \ s\geq-\log T .
                           \end{array}\right.\end{align}

%Here, we have use the fact that when $(a-2\varepsilon_0,a+2\varepsilon_0)\subset \omega$, $\mathds{1}_\omega\chi=\chi$.

Let \begin{align}\label{w1}w=\varphi+q,\end{align}
where  \begin{align}\label{w2}\varphi=f(\frac{z}{\sqrt{s}})+\frac{\kappa}{2ps}\end{align}
with $\kappa=(p-1)^{-\frac{1}{p-1}}$ and $f$ is defined in (\ref{fnew}). Then,
$q$ is a solution to the following equation in $\mathbb{R}\times[s_0, \infty)$,
\begin{align}\label{2.20}
\partial_s q=(\mathcal{L}+V)q+B(z,s)+R(z,s)+N(z,s),
\end{align}
where
\begin{align}\label{LD}&
\mathcal{L}=\partial_z^2-\frac{1}{2}z\partial_z +1,\
V=p\varphi^{p-1}-\frac{p}{p-1},\\
\label{}&B(z,s)=|(\varphi+q)|^{p-1}(\varphi+q)-\varphi^p-p\varphi^{p-1}q,\\
\label{}&R(z,s)=\partial_z^2\varphi-\frac{1}{2}z\partial_z \varphi-\frac{1}{p-1}\varphi+\varphi^p-\partial_s \varphi,\label{R}\\
\label{}&N(z,s)=H+\partial_z G(z,s),\\
\label{H}&H(z,s)=W(\partial_s \chi+\partial_z^2 \chi+\frac{1}{2}z\partial_z\chi)+|W|^{p-1}W(\chi-\chi^p),\end{align}
and
\begin{align}\label{GGG}&G(z,s)=-2\partial_z\chi W,\ \partial_zG(z,s)=-2\partial_z^2\chi W-2\partial_z\chi\partial_z W.\end{align}

We give a decomposition of the solution $q$ to equation (\ref{2.20}) according to the spectrum of $\mathcal{L}$.
The operator $\mathcal{L}$ is self-adjoint on $\mathcal{D}(\mathcal{L})\subset L_\rho^2(\mathbb{R})$ with
\begin{align} \rho(z)=\frac{e^{-\frac{z^2}{4}}}{\sqrt{4\pi}},\end{align}
and $$L_\rho^2(\mathbb{R})=\Big\{\vartheta\in L_{loc}^2(\mathbb{R});\ \|\vartheta\|_{L_\rho^2}^2=\int |\vartheta|^2\rho(z)dz<+\infty\Big\}.$$
The spectrum of $\mathcal{L}$ is explicitly given by
$$\mbox{spec}(\mathcal{L})=\{1-\frac{m}{2};\ m\in \mathbb{N}\}.$$
 All the eigenvalues are simple. For $1-\frac{m}{2}$ corresponds the eigenfunction
\begin{align}
h_m(z)=\sum\limits_{n=0}^{[\frac{m}{2}]}\frac{m!}{n!(m-2n)!}(-1)^nz^{m-2n}.
\end{align}
The eigenvalues $h_m$ satisfy the following orthogonality condition
$$\int h_nh_m\rho dz=2^n n!\delta_{n,m}.$$
In the following, we will often use the notation
%We will note also
\begin{align}\label{km}k_m=h_m/\|h_m\|_{L_\rho^2}^2.\end{align}

\medskip

Now,
%First,
let us first introduce
\begin{align}\label{chi1}
\chi_1(z,s)=\chi_0\Big(\frac{|z|}{K_0\sqrt{s}}\Big),
\end{align}
where $\chi_0$ is defined in (\ref{1.7}), $K_0\geq 1$ will be chosen
large enough.
We then write
%We write
\begin{align}\label{qu}q=q_e+q_b,\end{align} where
\begin{align}\label{qu1}
q_b=q\chi_1,\ q_e=q(1-\chi_1).
\end{align}
From (\ref{1.7}) and (\ref{chi1}), we see that
\begin{align}\label{qu2}\mbox{supp } q_b(s)\subset B(0,2K_0\sqrt{s}),\
  \mbox{supp } q_e(s)\subset \mathbb{R}\setminus B(0,K_0\sqrt{s}).\end{align}
Second, we decompose $q_b$ as follows,
\begin{align}\label{qu3}
q_b(z,s)=\sum\limits_{m=0}^2 q_m(s)h_m(z)+q_{-}(z,s),
\end{align}
where $q_m$ is the projection of $q_b$ on $h_m$, $q_{-}(z,s)=P_{-}(q_b)$, and $P_{-}$ is the projection on $\{h_i;\ i\geq 3\}$, the negative subspace of the operator $\mathcal{L}$.

\medskip

 In order to reduce the infinite dimensional problem proposed in Proposition \ref{lemma1.3} into a dimensional one, we need the following definition as in \cite{LinZaag}, which is a small adaptation of the original one in \cite{LinZaag}
 (see Definition 2.1 in \cite{LinZaag}).

 \begin{definition} [Shrinking set] \label{s} For all $K_0\geq 1$, $\varepsilon_0>0$, $A>0$, $\mu\in(0,1)$, $\eta_0\in(0,1]$, $T>0$ and $t\in [0,T)$, we say
a function $y$ defined in $\Omega\times[0,t]$ belongs to the set
$S^*(K_0,\varepsilon_0,A,\mu,\eta_0,T,t)$ at time $t$, if

(i)  $q(s)\in V_{K_0,A}(s)$,  where $s=-\log(T-t)$, $z=e^{s/2}(x-a)$, $q(s)$ is defined in (\ref{W1}), (\ref{W1j}), (\ref{w}), (\ref{w1}) and (\ref{w2}),
and $V_{K_0,A}(s)$ is the set of all functions $r\in L^{\infty}(\mathbb{R})$ such that
\begin{align}\left\{\begin{array}{ll}\label{ru} |r_m(s)|\leq As^{-2} (m=0, 1),\ |r_2(s)|\leq A^2s^{-2}\log s,\\|r_{-}(z,s)|\leq As^{-2}(1+|z|^3),\ \ \ \ |r_e(z,s)|\leq A^2s^{-1/2},
\end{array}\right.\end{align}
where
\begin{align}\label{r0r1r2}\left\{\begin{array}{ll} r_e(z,s)=(1-\chi_1(z,s))r(z,s),\ r_{-}(z,s)=P_{-}(\chi_1r),\\
\mbox{for}\ m\in \mathbb{N},\ r_m(s)=\int k_m(z)\chi_1(z,s)r(z,s)\rho(z) dz.
\end{array}\right.\end{align}

(ii)  $\mbox{For all}\ \ x\in \Omega$ with $|x-a|\geq \mu\varepsilon_0$, $|y(x,t)|\leq \eta_0$.
\end{definition}

First, let us give an estimate which will be used in many places in this paper.

\medskip

Since $a\in\omega$, there exists $\delta_0\in (0,1)$ such that for any
%$\varepsilon_0$ with
$\varepsilon_0\in(0,\delta_0]$, $(a-4\varepsilon_0,a+4\varepsilon_0)\subset \omega$. Then, for any
$K_0\geq 1$ and any $\varepsilon_0\in(0,\delta_0]$, there exists $s_0^1(K_0,\varepsilon_0)>0$ such that when $s_0\geq  s_0^1$, $K_0\sqrt{s_0}<\varepsilon_0e^{s_0/2}$.

\medskip

Next, we can derive the following bounds when $y(t)\in
S^*(K_0,\varepsilon_0,A,\mu,\eta_0,T,t)$ (see Lemma 2.2 in \cite{LinZaag}). Throughout the paper, $C$ is a constant number, which may be
different according to the context.
\begin{lemma}\label{W^*}For all $K_0\geq 1$, $\varepsilon_0>0$ and $A\geq 1$, there exists $s_0^2(K_0,\varepsilon_0,A)>0$ such that if $s\geq  s_0\geq s_0^2$,
$\mu\in(0,1)$,
$\eta_0\in(0,1]$,
$t\in[0,T)$ and $y(t)\in S^*(K_0,\varepsilon_0,A,\mu,\eta_0,T,t)$,
where $t=T-e^{-s}$, then

(i) $\forall z\in \mathbb{R}$, $|q(z,s)|\leq CA^2\frac{\log s}{s^2}(1+|z|^3)$,

(ii) $\|q(s)\|_{L^\infty}\leq C A^2s^{-1/2}$,

(iii) $\|W(s)\|_{L^\infty}\leq (p-1)^{-1/(p-1)}+2$.
\end{lemma}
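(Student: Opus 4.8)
The plan is to read off all three estimates directly from the definition of the shrinking set $V_{K_0,A}(s)$ in Definition \ref{s}, using the explicit form of the low Hermite functions $h_0,h_1,h_2$ and of the profile $\varphi$. Throughout, ``for $s$ large'' means $s$ larger than a threshold $s_0^2$ depending on $K_0,\varepsilon_0,A$, chosen so that in particular $K_0\sqrt s<\varepsilon_0 e^{s/2}$ (which makes $\mbox{supp }\chi_1(s)\subset\mbox{supp }\chi(s)$, so that the decomposition (\ref{qu3}) of $q_b$ is meaningful). For (i), I would split $q=q_b+q_e$ as in (\ref{qu})--(\ref{qu1}) and use (\ref{qu3}) together with $h_0(z)=1$, $h_1(z)=z$, $h_2(z)=z^2-2$; the bounds (\ref{ru}) on $q_0,q_1,q_2$ and on $q_-$ then give $|q_b(z,s)|\le CA^2\frac{\log s}{s^2}(1+|z|^3)$ (using $A\ge 1$ and $\log s\ge 1$). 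On $\mbox{supp }q_e(s)$ one has $|z|\ge K_0\sqrt s$ by (\ref{qu2}), hence $1\le |z|^3/(K_0^3 s^{3/2})$; multiplying the bound $|q_e(z,s)|\le A^2 s^{-1/2}$ from (\ref{ru}) by this factor gives $|q_e(z,s)|\le (A^2/K_0^3)\,|z|^3 s^{-2}\le CA^2\frac{\log s}{s^2}(1+|z|^3)$. Adding these and recalling $q=q_b+q_e$ proves (i).

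For (ii), I would again split $q=q_b+q_e$. Since $\mbox{supp }q_b(s)\subset B(0,2K_0\sqrt s)$ by (\ref{qu2}), on that ball the decomposition (\ref{qu3}) and the bounds (\ref{ru}) yield $\|q_b(s)\|_{L^\infty}\le CA^2 s^{-1/2}$ for $s$ large: the dominant term is $|q_-(z,s)|\le As^{-2}(1+|z|^3)\le CAK_0^3 s^{-1/2}$, while the $q_2$-contribution is only $CA^2 K_0^2 s^{-1}\log s\le CA^2 s^{-1/2}$ and the $q_0,q_1$-terms are of still lower order. Here one must argue from the spectral decomposition (\ref{qu3}) rather than insert (i), since substituting $|z|\le 2K_0\sqrt s$ into (i) would produce a spurious extra factor $\log s$. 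Together with $\|q_e(s)\|_{L^\infty}\le A^2 s^{-1/2}$ from (\ref{ru}) and $q=q_b+q_e$, this gives (ii).

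For (iii), I would use $w=\varphi+q$ from (\ref{w1})--(\ref{w2}) together with the fact that $f$ in (\ref{fnew}) is decreasing in $|\eta|$, hence attains its maximum at $0$ with $f(0)=\kappa=(p-1)^{-1/(p-1)}$; so $\|\varphi(s)\|_{L^\infty}\le\kappa+\frac{\kappa}{2ps}$, and combined with (ii) we get $\|w(s)\|_{L^\infty}\le\kappa+\frac{\kappa}{2ps}+CA^2 s^{-1/2}\le\kappa+1$ for $s$ large. To pass from $w$ to $W$ I would split according to the position of $z$. If $|x-a|\le\varepsilon_0$, i.e. $|z|\le\varepsilon_0 e^{s/2}$, then $\chi(z,s)=1$ by (\ref{chi}), so $W(z,s)=w(z,s)$ by (\ref{w}) and $|W(z,s)|\le\kappa+1$. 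If $|x-a|>\varepsilon_0\ge\mu\varepsilon_0$ (using $\mu\in(0,1)$), then condition (ii) of Definition \ref{s} gives $|y(x,t)|\le\eta_0\le 1$, so by (\ref{W1}) $|W(z,s)|=e^{-s/(p-1)}|y(x,t)|\le 1$. Therefore $\|W(s)\|_{L^\infty}\le\kappa+1\le(p-1)^{-1/(p-1)}+2$, which is (iii).

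The computations are all elementary, so I do not expect a genuine obstacle; the points that need care are the bookkeeping of the lower thresholds on $s$ (above all $K_0\sqrt s<\varepsilon_0 e^{s/2}$, which is what makes the decomposition of $q_b$ well posed), and the remark recorded above that the $L^\infty$ bound in (ii) has to be read off from the spectral decomposition of $q_b$ rather than from estimate (i).
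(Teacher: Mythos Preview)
Your argument is correct and is exactly the standard elementary derivation of these bounds from the definition of $V_{K_0,A}(s)$; the paper itself does not give a proof here but simply refers to Lemma~2.2 in \cite{LinZaag}, where the same reasoning is used. Your care in part~(ii) to go back to the spectral decomposition of $q_b$ rather than plug the support bound into~(i) (which would introduce a spurious $\log s$) is well placed.
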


By Definition \ref{s}, there are two types of estimates: item (i) and item (ii). In the following, we will give two different
priori estimates, following two estimates. For item (i), the following lemma plays an important role in reducing the infinite dimensional problem into a finite one in the blowup region, which is a
small adaptation of Proposition 4.4 in \cite{LinZaag}.

\begin{lemma}\label{qa}There exists $K_{0}^1\geq 1$ such that for any $K_0\geq K_0^1$,   any $\varepsilon_0\in(0,\delta_0]$ and any
$\widetilde{A}>0$,   there exists $\overline{A}(\widetilde{A},K_0,\varepsilon_0)\geq 1
$ such that for all $A\geq\overline{A}(\widetilde{A},K_0,\varepsilon_0)$,
there exists $\overline{s}_0(\widetilde{A},A,K_0,\varepsilon_0)>0$ such that for any $s_0\geq \overline{s}_0(\widetilde{A},A,K_0,\varepsilon_0)$,  any
$\mu\in(0,1)$ and any $\eta_0\in(0, 1]$,
for any solution of (\ref{2.20}),
we have the following property: if
   \begin{align}\left\{\begin{array}{ll} \label{property}
              |q_m(s_0)|\leq As_0^{-2},\ m=0,1,\\
               |q_2(s_0)|\leq \widetilde{A}s_0^{-2}\log s_0,\\
             |q_{-}(z,s_0)|\leq \widetilde{A}(1+|z|^3)s_0^{-2},\\
            \|q_e(s_0)\|_{L^\infty}\leq \widetilde{A}s_0^{-1/2},\end{array}\right. \end{align}
      if for some $s_1\geq s_0$,
we have  $y(t)\in S^*(K_0,\varepsilon_0,A,\mu,\eta_0,T,t)$, whenever  $t=T-e^{-s}$ with $ s\in[s_0,s_1]$,
% we have $\forall s\in[s_0,s_1]$, $y(t)\in S^*(t)$ with $t=T-e^{-s}$,
      then, for all $s\in[s_0,s_1]$,
            \begin{align}\left\{\begin{array}{ll}
               |q_2(s)|\leq {A^2}s^{-2}\log s-s^{-3},\\
             |q_{-}(z,s)|\displaystyle\leq {\frac{A}{2}}(1+|z|^3)s^{-2},\label{property1}\\
            \|q_e(s)\|_{L^\infty}\leq \displaystyle\frac{A^2}{2\sqrt{s}}.\end{array}\right.\end{align}
\end{lemma}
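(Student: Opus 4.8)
\emph{Proof strategy.} The plan is to follow the standard ``a priori estimates'' step of the constructive blow-up method (Bricmont--Kupiainen, Merle--Zaag), in the localized form already used in \cite{LinZaag} and \cite{Zaag2}: write equation \eqref{2.20} in Duhamel form with respect to the semigroup $e^{\tau\mathcal L}$ generated by $\mathcal L=\partial_z^2-\tfrac12 z\partial_z+1$, treat $Vq$, $B$, $R$, $N$ and the commutator terms produced by the cut-off $\chi_1$ as forcing, and then control the three ``good'' components $q_2$, $q_-$, $q_e$ separately. The modes $q_0$, $q_1$ are deliberately \emph{not} improved here, since they are the finitely many unstable directions to be killed later by a topological (Brouwer) argument; this is why \eqref{property1} says nothing about $q_0,q_1$.

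First I would estimate the forcing under the trapping hypothesis $y(t)\in S^*(K_0,\varepsilon_0,A,\mu,\eta_0,T,t)$ on $[s_0,s_1]$. Lemma \ref{W^*} gives $|q(z,s)|\le CA^2\frac{\log s}{s^2}(1+|z|^3)$, $\|q(s)\|_{L^\infty}\le CA^2 s^{-1/2}$ and $\|W(s)\|_{L^\infty}\le(p-1)^{-1/(p-1)}+2$. From these: $B$ is quadratic in $q$, hence gains an extra power of $\log s/s^2$, and on $\mathrm{supp}\,\chi_1\subset B(0,2K_0\sqrt s)$ is controlled pointwise with the $(1+|z|^3)$ weight; $R$ is supported in $|z|\le 2\varepsilon_0 e^{s/2}$ with $|R(z,s)|\le \tfrac{C}{s^2}(1+|z|^2)$ in the blow-up region, using that $f$ solves the profile ODE and that the correction $\tfrac{\kappa}{2ps}$ in \eqref{w2} is tuned to kill the leading $h_2$-part of the profile error; $N$, through \eqref{H}--\eqref{GGG}, is supported in $|z|\ge \tfrac{\varepsilon_0}{2}e^{s/2}$ and, via the $L^\infty$ bound on $W$, is exponentially small in $e^{s}$, hence negligible against any power of $1/s$; and the $\chi_1$-commutators are supported near $|z|\sim K_0\sqrt s$, where the bounds of Definition \ref{s}(i) apply.

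Then I would treat the components one at a time. For $q_e$ (supported in $|z|\ge K_0\sqrt s$) the potential $V=p\varphi^{p-1}-\tfrac p{p-1}$ is there close to $-\tfrac p{p-1}$, so $\mathcal L+V$ has effective decay rate $\le -\tfrac1{p-1}$; a maximum-principle / Duhamel $L^\infty$ estimate then gives $\|q_e(s)\|_{L^\infty}\le Ce^{-(s-s_0)/(p-1)}\widetilde A s_0^{-1/2}+CAK_0^3 s^{-1/2}+(\text{negligible})\le \tfrac{A^2}{2}s^{-1/2}$ once $A\ge\overline A(\widetilde A,K_0,\varepsilon_0)$ is large enough. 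For $q_-$ (projection on $\{h_i:i\ge3\}$), the spectral gap of $\mathcal L$ gives the contraction $e^{-(s-\tau)/2}$; combining Duhamel with the weighted bounds on the forcing (in particular $|Vq_-|\le \tfrac{C}{s}|q_-|$, which costs only a harmless $1/s$) and the initial bound $|q_-(z,s_0)|\le\widetilde A(1+|z|^3)s_0^{-2}$ yields $|q_-(z,s)|\le \big(C\widetilde A e^{-(s-s_0)/2}+\tfrac{CA}{s_0}+o(1)\big)(1+|z|^3)s^{-2}\le\tfrac A2(1+|z|^3)s^{-2}$, again for $A$ and $s_0$ large. The delicate case is the null mode $q_2$, on which $\mathcal L$ provides no decay ($1-\tfrac22=0$): the decay must be extracted from the potential. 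Projecting \eqref{2.20} onto $h_2$ and Taylor-expanding $\varphi=f(z/\sqrt s)+\tfrac{\kappa}{2ps}$ produces a scalar ODE of the form
\[
q_2'(s)=-\frac{2}{s}\,q_2(s)+O\!\left(\frac{1}{s^{3}}\right),
\]
where the coefficient $-2/s$ comes from projecting $Vq_2h_2$ onto $h_2$ — the correction $\tfrac{\kappa}{2ps}$ is exactly what normalizes this coefficient — and the remaining forcing, from $R$, $B$, $N$ and the mode-mixing with $q_0,q_1,q_-$, is genuinely $O(s^{-3})$. Integrating with the integrating factor $s^2$ and using $|q_2(s_0)|\le\widetilde A s_0^{-2}\log s_0$ gives $|q_2(s)|\le \tfrac1{s^2}\big(\widetilde A\log s_0+C\log(s/s_0)\big)\le (\widetilde A+C)\tfrac{\log s}{s^2}\le A^2 s^{-2}\log s-s^{-3}$ provided $A^2\ge\widetilde A+C+1$ and $s_0$ is large.

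Finally I would fix the constants in the order imposed by the statement: first $K_0^1$ large (so that $\mathrm{supp}\,\chi_1$ and $\mathrm{supp}(1-\chi_1)$ are well separated and $V$ is negative on the latter); then, given $\widetilde A$, choose $\overline A(\widetilde A,K_0,\varepsilon_0)$ large enough that every ``$\tfrac12$'' gain and the subtracted $s^{-3}$ in \eqref{property1} are realizable; finally $\overline s_0(\widetilde A,A,K_0,\varepsilon_0)$ large enough that all $O(1/s)$, $O(\log s/s)$ and exponentially small errors are beaten on $[s_0,\infty)$; the parameters $\mu$ and $\eta_0$ enter only through the definition of $S^*$ and play no further role. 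I expect the main obstacle to be the precise bookkeeping for $q_2$: one must check that, after the $\tfrac{\kappa}{2ps}$ correction, the $h_2$-projection of the full forcing is really $O(s^{-3})$ with no surviving $\log s$ and no leading-order $A$-dependence — this is exactly what lets the bound $A^2 s^{-2}\log s$ (and not a worse one) be reproduced — and that the quadratic term $B$ and the cut-off commutators do not spoil it.
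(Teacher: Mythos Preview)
Your proposal is correct and follows essentially the same route as the paper. Note that the paper does not reprove this lemma in detail: it states it as ``a small adaptation of Proposition 4.4 in \cite{LinZaag}'', whose proof in turn follows \cite{Zaag2} and \cite{Zaag1}; the Duhamel decomposition with respect to $e^{\tau\mathcal L}$, the separate treatment of $q_e$ (maximum-principle/negative potential in the outer region), $q_-$ (spectral-gap contraction), and $q_2$ (the scalar ODE $q_2'=-\tfrac{2}{s}q_2+O(s^{-3})$ obtained by projecting $Vq$ onto $h_2$), together with the exponential smallness of $N$ on $\{|z|\ge \varepsilon_0 e^{s/2}\}$, are exactly the ingredients used there, and your ordering of the constants $K_0^1\to\overline A\to\overline s_0$ matches the intended hierarchy.
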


For item (ii), given a small $x$, define $t=t_0(x)$ by
\begin{align} \label{bu region1} |x-a|=K_0\sqrt{(T-t_0(x))|\log(T-t_0(x))}|\end{align}
to see that the solution is in fact ``flat'' at that time. Then,  we
see that the solution of (\ref{profile1}) remains ``flat'' for later
time. By ``flat'', we mean an almost space independent function. More precisely, we claim the following lemma, which is a small adaptation
of Lemma 4.6 in \cite{LinZaag}.

\begin{lemma}\label{estimate1}  Let $\widetilde{t}\in (0,T)$. There exist $0<\varsigma_0<1$ and $K_0^2\geq 1$  such that for any $K_0\geq K_0^2$, there exists $\widehat{\delta}\in
(0,\delta_0)$ such that for any $\varepsilon_0\in (0,\widehat{\delta}]$  and $A\geq 1$, there exists $s_0^3(K_0,\varepsilon_0,A)$
such that if $s_0\geq s_0^3$,  $\mu\in(0,1)$ and $\eta_0\in(0,1]$, then for  $x_0\in  \{z\in \mathbb{R};\ 0<|z-a|<\varepsilon_0\}$,
  $x=x_0+\xi\sqrt{T-t_0(x_0)}$ with $|\xi|\leq |\log(T-t_0(x_0))|^{1/4}$, and  for $t=t_0(x_0)+\tau(T-t_0(x_0))$, $t\in[0,\widetilde{t}]$, if $y(t)\in S^*(K_0,\varepsilon_0,A,\mu,\eta_0,T,t)$, then it
  holds that the solution $y$ with initial data
  $y_0\in  W_0^{1,q}(\Omega)\bigcap W^{2,q}(\Omega)$ ($q>n+2$) to system (\ref{profile1}) satisfies
$$\forall t_0(x_0)\leq  t\leq \widetilde{t},\ \Big|\frac{y(x,t)}{y^*(x_0)}-\frac{U_{K_0}(\tau)}{U_{K_0}(1)}\Big|\leq \frac{C}{|\log|x_0-a||^{\varsigma_0}},$$
where
\begin{align}y^*(x_0)=\Big[\frac{(p-1)^2|x_0-a|^2}{8p|\log|x_0-a||}\Big]^{-\frac{1}{p-1}},\end{align}
and
\begin{align}U_{K_0}(\tau)=\Big((p-1)(1-\tau)+\frac{(p-1)^2}{4p}K_0^2\Big)^{-\frac{1}{p-1}}.\end{align}
Moreover, $|y(x_0,t)|\leq C_1(K_0)|y^*(x_0)|$ for all $t\in[0,\widetilde{t}]$, where $C_1(K_0)$ is a constant depending on $K_0$.
\end{lemma}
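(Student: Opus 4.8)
The plan is to argue by a parabolic rescaling centered at $x_0$, which turns the question into a small perturbation of the ODE $\dot U=|U|^{p-1}U$, whose solution issued from $U_{K_0}(0)=f(K_0)$ is exactly $U_{K_0}(\tau)$. Fix $x_0$ with $0<|x_0-a|<\varepsilon_0$ for which $t_0(x_0)$ is defined by \eqref{bu region1}, and write $\bar s:=-\log(T-t_0(x_0))=|\log(T-t_0(x_0))|$. Since $t_0(x_0)\ge 0$ we have $\bar s\ge s_0=-\log T$, and then \eqref{bu region1} gives $|x_0-a|^2=K_0^2\bar s\,e^{-\bar s}\le K_0^2 T|\log T|$, so $|x_0-a|$ is small once $T$ is small; also $\bar s=2|\log|x_0-a||+\log\bar s+2\log K_0$, hence $\bar s/(2|\log|x_0-a||)\to 1$ with error $O(\log|\log|x_0-a||/|\log|x_0-a||)$. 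Introduce
\[
v_{x_0}(\xi,\tau):=(T-t_0(x_0))^{\frac{1}{p-1}}\,y\!\left(x_0+\xi\sqrt{T-t_0(x_0)},\ t_0(x_0)+\tau(T-t_0(x_0))\right).
\]
Because $(a-4\varepsilon_0,a+4\varepsilon_0)\subset\omega$ by \eqref{inside} and $|x_0-a|$ is small, $\mathds 1_\omega$ equals $1$ on the range of $x$ involved, so $v_{x_0}$ solves the genuine semilinear heat equation $\partial_\tau v=\partial_\xi^2 v+|v|^{p-1}v$ on the cylinder $\{|\xi|<\rho_0(x_0)\}\times[0,\tau_{\max}]$, where $\rho_0(x_0):=\tfrac12\varepsilon_0 e^{\bar s/2}\to\infty$ and $\tau_{\max}:=(\widetilde t-t_0(x_0))/(T-t_0(x_0))\le 1-(T-\widetilde t)/T<1$ since $\widetilde t<T$ is fixed. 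Through \eqref{W1}--\eqref{W1j}, $x_0$ corresponds to $z_0:=e^{\bar s/2}(x_0-a)$ with $|z_0|=K_0\sqrt{\bar s}$, i.e.\ exactly the edge of the blowup region.

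First I would prove the estimate at $\tau=0$. Since $\chi\equiv 1$ near $x_0$, for $|\xi|\le 10\bar s^{1/4}$ one has $v_{x_0}(\xi,0)=W(z_0+\xi,\bar s)=f\!\big((z_0+\xi)/\sqrt{\bar s}\big)+\kappa/(2p\bar s)+q(z_0+\xi,\bar s)$; here $(z_0+\xi)/\sqrt{\bar s}=K_0\,\mathrm{sgn}(x_0-a)+O(\bar s^{-1/4})$, and since $|z_0+\xi|\le 2K_0\sqrt{\bar s}$ Lemma~\ref{W^*}(i) gives $|q(z_0+\xi,\bar s)|\le CA^2\bar s^{-2}\log\bar s\,(1+|z_0+\xi|^3)=O(\bar s^{-1/2}\log\bar s)$, so using that $f$ is even and $C^1$,
\[
\sup_{|\xi|\le 10\bar s^{1/4}}\big|v_{x_0}(\xi,0)-U_{K_0}(0)\big|\le C\,\bar s^{-1/4}.
\]
In addition, Lemma~\ref{W^*}(iii) together with item (ii) of the shrinking set (Definition~\ref{s}) bound $|W|$, hence $|y|$, everywhere, which combined with $T-t=(T-t_0(x_0))(1-\tau)$ and $1-\tau\ge(T-\widetilde t)/T$ yields the uniform bound $\|v_{x_0}(\cdot,\tau)\|_{L^\infty(|\xi|<\rho_0(x_0))}\le C$ for all $\tau\in[0,\tau_{\max}]$.

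Then I would propagate this forward in $\tau$. The difference $\psi:=v_{x_0}-U_{K_0}$ solves a linear heat equation $\partial_\tau\psi=\partial_\xi^2\psi+c(\xi,\tau)\psi$ on $\{|\xi|<\rho_0(x_0)\}\times[0,\tau_{\max}]$ with $\|c\|_\infty\le C$ (the Laplacian of $U_{K_0}$ vanishes, and $c$ is a bounded mean-value coefficient since $v_{x_0}$ and $U_{K_0}$ are bounded). Splitting the datum $\psi(\cdot,0)$ into its part on $|\xi|\le 10\bar s^{1/4}$ (of size $O(\bar s^{-1/4})$ in $L^\infty$) and its part on $10\bar s^{1/4}<|\xi|<\rho_0(x_0)$ (merely $O(1)$), and using Gaussian upper bounds for the fundamental solution of $\partial_\tau-\partial_\xi^2-c$ over a time interval of length $\le 1$, together with the negligible ($\le Ce^{-c\rho_0(x_0)^2}$) influence of the lateral boundary $|\xi|=\rho_0(x_0)$, one gets for $|\xi|\le \bar s^{1/4}$ and $0\le\tau\le\tau_{\max}$ that $|\psi(\xi,\tau)|\le Ce^{C\tau_{\max}}\bar s^{-1/4}+Ce^{-c\bar s^{1/2}}+Ce^{-c\rho_0(x_0)^2}\le C\bar s^{-1/4}$. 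Translating back with $T-t_0(x_0)=|x_0-a|^2/(K_0^2\bar s)$ gives
\[
\frac{y(x,t)}{y^*(x_0)}=\Big[\frac{K_0^2(p-1)^2\bar s}{8p|\log|x_0-a||}\Big]^{\frac{1}{p-1}}v_{x_0}(\xi,\tau)=\frac{1}{U_{K_0}(1)}\Big(1+O\big(\tfrac{\log|\log|x_0-a||}{|\log|x_0-a||}\big)\Big)v_{x_0}(\xi,\tau),
\]
and since $U_{K_0}(\tau)$ is bounded and $\bar s\sim 2|\log|x_0-a||$, combining with the previous bound proves the claimed estimate with, say, $\varsigma_0=1/4$. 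Finally, $|y(x_0,t)|\le C_1(K_0)|y^*(x_0)|$ follows for $t\in[t_0(x_0),\widetilde t]$ by taking $\xi=0$ above and using that $U_{K_0}$ is bounded on $[0,1]$, and for $t\in[0,t_0(x_0)]$ directly: at such times $x_0$ lies in the blowup region ($|z|\le K_0\sqrt s\le 2K_0\sqrt s$), so $|y(x_0,t)|=(T-t)^{-1/(p-1)}|W(z,s)|\le(\kappa+2)(T-t_0(x_0))^{-1/(p-1)}=C_1(K_0)|y^*(x_0)|$ by Lemma~\ref{W^*}(i),(iii).

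The main obstacle is the propagation step. The semilinear equation for $v_{x_0}$ holds only on the truncated cylinder $|\xi|<\rho_0(x_0)$, and the control at $\tau=0$ is available only on $|\xi|\lesssim\bar s^{1/4}$, so one must quantify carefully how the uncontrolled-but-bounded part of the datum and the lateral boundary affect the solution near $\xi=0$: this is where the Gaussian decay of the heat kernel has to be balanced against the Gronwall-type amplification produced by the bounded potential, and where one must track the chain of parameter choices $K_0\to\varepsilon_0\to A\to s_0$ (equivalently $T$ small) to make every estimate uniform in $x_0$.
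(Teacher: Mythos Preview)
Your approach is exactly the classical one that the paper invokes (the paper gives no proof here, only cites Lemma~4.6 of \cite{LinZaag}, which in turn follows Merle--Zaag \cite{Zaag1}): rescale around $x_0$, read off the flatness of $v_{x_0}(\cdot,0)$ from the shrinking-set bounds at the edge $|z|=K_0\sqrt{\bar s}$, and propagate by comparison with the ODE solution $U_{K_0}$.

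There is one point that needs tightening. Your a~priori bound $\|v_{x_0}(\cdot,\tau)\|_{L^\infty}\le C$ is obtained from Lemma~\ref{W^*}(iii) together with $(1-\tau)^{-1/(p-1)}\le (T/(T-\widetilde t))^{1/(p-1)}$, hence it depends on $\widetilde t$; so does the potential $c$, so does the Gronwall amplification $e^{\|c\|_\infty}$, and so does the threshold on $\bar s$ you need at the end. But the lemma claims $s_0^3=s_0^3(K_0,\varepsilon_0,A)$ and $C_1(K_0)$ independent of $\widetilde t$ (this is essential in Step~2 of the proof of Proposition~\ref{lemma1.3}, where the lemma is applied with $\widetilde t=t_*$ variable). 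The remedy, which is how the argument is run in \cite{Zaag1} and \cite{LinZaag}, is a bootstrap: set $\tau^*=\sup\{\tau\le\tau_{\max}:\ |v_{x_0}(\xi,\tau')|\le 2U_{K_0}(1)\text{ for }|\xi|\le \bar s^{1/4},\ \tau'\le\tau\}$; on $[0,\tau^*)$ the potential is bounded by a constant depending only on $K_0$, your heat-kernel/Duhamel estimate then gives $|v_{x_0}-U_{K_0}|\le C(K_0)\bar s^{-1/4}$ on $|\xi|\le\bar s^{1/4}$, and for $\bar s$ large depending only on $K_0$ this is $<U_{K_0}(1)$, so continuity forces $\tau^*=\tau_{\max}$. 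With this modification your argument goes through with the correct parameter dependence, and the ``Moreover'' bound follows as you wrote (the case $t\le t_0(x_0)$ is fine as stated).
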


In order to give a priori estimate following estimate (ii) in Definition \ref{s}, we need the following lemma, which is similar to
 Proposition 4.7 in \cite{LinZaag}.

\begin{lemma}\label{P44}
For any $\epsilon\in(0,1]$, any $\varepsilon_0\in(0,\widehat{\delta}]$ and any $\sigma_1\geq 0$, there exist
 a positive constant $C_2>1$ independent of $\epsilon,\varepsilon_0$ and $\sigma_1$, $\Theta(\epsilon,\sigma_1)=$$\frac{\epsilon}{4e^{(\sigma_1+1)^{p-1}}}
$ and $T_2(\epsilon,\varepsilon_0,\sigma_1)=\min\Big\{1,\Big(\frac{\epsilon}{4C_2e^{(\sigma_1+1)^{p-1}}(\sigma_1+1)
(\frac{1}{\varepsilon_0}+\frac{1}{\varepsilon_0^2})}\Big)^2\Big\}$
such that for any  $(\mu_1,\mu_2)\in \big\{(\frac{1}{4},\frac{1}{2}),\ (\frac{3}{4},\frac{7}{8})\big\}$, for any  $\overline{\eta}\in[0,\Theta]$  and any
$\overline{t}\in[0, T_2]$, if $y$ is the solution to system (\ref{profile1}) in the time interval $[0, \overline{t}]$ with initial data
  $y_0\in  W_0^{1,q}(\Omega)\bigcap W^{2,q}(\Omega)$ ($q>n+2$)
which satisfies
\begin{align}&(i)\ \mbox{for}\ x\in \Omega\ \mbox{with}\ |x-a|\in \big[\mu_1\varepsilon_0,\mu_2\varepsilon_0\big] \mbox{and}\ t\in [0, \overline{t}],\ |y(x,t)|\leq \sigma_1;\\
&(ii)\ \mbox{for}\ x\in \Omega\ \mbox{with}\ |x-a|\geq\mu_1\varepsilon_0,\  |y(x,0)|\leq \overline{\eta}.\end{align}
Then, for any $t\in[0,\overline{t}]$ and any $x\in \Omega$ with $|x-a|\geq \mu_2\varepsilon_0,$ it holds that
$$|y(x,t)|< \epsilon.$$
\end{lemma}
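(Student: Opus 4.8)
The plan is to prove this by a comparison argument with an explicitly solvable ODE, localized to the annular region $\{|x-a|\geq \mu_1\varepsilon_0\}$ by means of a suitable cut-off, and then to absorb the localized nonlinearity $\mathds{1}_\omega|y|^{p-1}y$ into a linear term thanks to hypothesis (i). First I would introduce a smooth cut-off $\psi$ equal to $1$ on $\{|x-a|\geq \mu_2\varepsilon_0\}$ and vanishing on $\{|x-a|\leq \mu_1\varepsilon_0\}$, and set $v=y\psi$. Then $v$ solves a heat equation with source terms coming from the commutator $[\Delta,\psi]y = 2\nabla\psi\cdot\nabla y+(\Delta\psi) y$ supported in the annulus $\{\mu_1\varepsilon_0\le|x-a|\le\mu_2\varepsilon_0\}$, plus the term $\psi\mathds{1}_\omega|y|^{p-1}y$. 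On the support of $\nabla\psi$ we use hypothesis (i): there $|y|\le\sigma_1$, and by parabolic regularity on a slightly larger annulus (using that $y_0\in W^{2,q}\cap W_0^{1,q}$ with $q>n+2$, so $y$ is bounded in $C^1$ up to the boundary on compact time intervals, with constants controlled by $\sigma_1$ and $\varepsilon_0$) also $|\nabla y|$ is bounded by $C(\sigma_1,\varepsilon_0)$; this gives the constant $C_2$ and the $\frac1{\varepsilon_0}+\frac1{\varepsilon_0^2}$ factors in $T_2$. For the term $\psi\mathds{1}_\omega|y|^{p-1}y$, wherever $\psi\ne0$ and we are still inside the region controlled by (i) we have $|y|^{p-1}\le\sigma_1^{p-1}\le(\sigma_1+1)^{p-1}$, so we may write $\psi\mathds{1}_\omega|y|^{p-1}y=V(x,t)v$ with $0\le V\le(\sigma_1+1)^{p-1}$ as long as $v$ itself stays $\le\sigma_1$ in absolute value.

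The second step is the actual comparison. Let $M(t)$ solve $M'(t)=(\sigma_1+1)^{p-1}M(t)+C_2(\sigma_1+1)(\frac1{\varepsilon_0}+\frac1{\varepsilon_0^2})$ with $M(0)=\overline\eta$; explicitly $M(t)=e^{(\sigma_1+1)^{p-1}t}\big(\overline\eta + (\text{lower order})\big)$, which on $[0,T_2]$ stays below $\epsilon$ by the very definitions of $\Theta$ and $T_2$ (here $\overline\eta\le\Theta=\frac{\epsilon}{4e^{(\sigma_1+1)^{p-1}}}$ handles the first part and $\overline t\le T_2$ the second, using $e^{(\sigma_1+1)^{p-1}\overline t}\le e^{(\sigma_1+1)^{p-1}}$). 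Since at $t=0$ we have $|v(x,0)|=|y(x,0)|\psi(x)\le\overline\eta\le M(0)$ on the support of $\psi$, and $v=0$ on $\partial\Omega$ and on the inner boundary of the support region, the maximum principle applied to $\pm v - M(t)$ — with a continuity/bootstrap argument to justify that $|v|\le\sigma_1$ remains valid so that the linearization $\psi\mathds{1}_\omega|y|^{p-1}y=Vv$ is licit (note $M(t)<\epsilon\le1\le\sigma_1+1$, but one needs $M(t)\le\sigma_1$, which holds once $\epsilon$ is compared to $\sigma_1$; if $\epsilon\le1$ and $\sigma_1\ge0$ this is where one uses that $|y|^{p-1}\le(\sigma_1+1)^{p-1}$ is a safe bound regardless) — yields $|v(x,t)|\le M(t)<\epsilon$ on all of $\Omega\times[0,\overline t]$. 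Restricting to $\{|x-a|\ge\mu_2\varepsilon_0\}$, where $\psi\equiv1$ and hence $v=y$, gives $|y(x,t)|<\epsilon$, which is the claim.

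The main obstacle I anticipate is the careful handling of the source term $[\Delta,\psi]y$: one needs an a priori $C^1$ bound on $y$ on the annulus $\{\mu_1\varepsilon_0\le|x-a|\le\mu_2\varepsilon_0\}$ depending only on $\sigma_1$ and $\varepsilon_0$ (not on the possibly-large norm of $y$ near the blowup point $a$), and this must come from interior parabolic regularity estimates applied on a slightly fattened annulus that still sits inside the region where hypothesis (i) gives the $L^\infty$ bound $\sigma_1$ — this is why the hypothesis is stated on a full annulus $[\mu_1\varepsilon_0,\mu_2\varepsilon_0]$ rather than on a single sphere, and why two pairs $(\mu_1,\mu_2)$ are allowed. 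The dependence of $C_2$ on $\varepsilon_0$ must be tracked to land exactly on the stated form of $T_2$; the powers $\frac1{\varepsilon_0}$ and $\frac1{\varepsilon_0^2}$ match $|\nabla\psi|\lesssim\frac1{\varepsilon_0}$ and $|\Delta\psi|\lesssim\frac1{\varepsilon_0^2}$. A minor additional point is ensuring the bootstrap: define $t^*=\sup\{t\le\overline t: |v|\le\sigma_1+1 \text{ on }[0,t]\}$, show $t^*=\overline t$ using that the comparison already gives $|v|\le M<\epsilon\le 1<\sigma_1+1$ strictly, so the set is both open and closed in $[0,\overline t]$ and nonempty — a standard closing argument.
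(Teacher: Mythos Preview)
Your high-level strategy (localize with a cutoff $\psi$ supported in $\{|x-a|\ge\mu_1\varepsilon_0\}$, run a bootstrap, compare to a supersolution) is the same as the paper's. The gap is in your treatment of the commutator term $2\nabla\psi\cdot\nabla y$. You assert that interior parabolic regularity on a slightly fattened annulus yields $|\nabla y|\le C(\sigma_1,\varepsilon_0)$ uniformly on $[0,\overline t]$; but interior estimates degenerate at the initial time, since the parabolic distance to $\{t=0\}$ is $\sqrt t$. What one actually gets is either $|\nabla y(\cdot,t)|\lesssim \sigma_1/\sqrt t$ on the annulus, or a bound depending on $\|y_0\|_{W^{2,q}}$ as well. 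The first option destroys the constant-source ODE comparison; the second makes $C_2$ depend on the initial data, contradicting the statement that $C_2$ is universal. Either way your ODE $M'=(\sigma_1+1)^{p-1}M+\text{const}$ does not close with the stated $\Theta$ and $T_2$ (note also that a constant source produces a contribution linear in $t$, whereas the square in the stated $T_2$ reflects a $\sqrt t$ contribution).

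The paper sidesteps the gradient bound entirely. It writes the commutator in divergence form, $-2\partial_x(\widetilde\chi' y)+\widetilde\chi'' y$, and uses Duhamel with the heat semigroup $S(t)$:
\[
\|\mathcal Y(t)\|_{L^\infty}\le \overline\eta+\int_0^t\Big\|S(t-t')\big[\mathds 1_\omega|y|^{p-1}\mathcal Y-2\partial_x(\widetilde\chi' y)+\widetilde\chi'' y\big]\Big\|_{L^\infty}\,dt'.
\]
The derivative is then shifted onto the kernel via $\|S(t-t')\partial_x f\|_{L^\infty}\le C(t-t')^{-1/2}\|f\|_{L^\infty}$, so only the $L^\infty$ bound $|y|\le\sigma_1$ on $\operatorname{supp}\widetilde\chi'$ is needed, never $\nabla y$. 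This yields
\[
\|\mathcal Y(t)\|_{L^\infty}\le \overline\eta+(\sigma_1+1)^{p-1}\!\int_0^t\|\mathcal Y\|_{L^\infty}\,dt'+\frac{C(\sigma_1+1)}{\varepsilon_0}\sqrt t+\frac{C(\sigma_1+1)}{\varepsilon_0^2}\,t,
\]
and Gronwall (combined with the same contradiction/bootstrap you describe) gives $\|\mathcal Y\|_{L^\infty}\le e^{(\sigma_1+1)^{p-1}}\big(\overline\eta+C_2(\sigma_1+1)(\varepsilon_0^{-1}+\varepsilon_0^{-2})\sqrt t\big)$, which is exactly what $\Theta$ and the squared form of $T_2$ are designed for. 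To fix your argument, replace the maximum-principle comparison by this Duhamel estimate; the rest of your bootstrap is correct and matches the paper.
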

\begin{proof}
Let $\epsilon\in(0,1]$, $\varepsilon_0\in(0,\widehat{\delta}]$, $\sigma_1\geq 0$ and $(\mu_1,\mu_2)\in \big\{(\frac{1}{4},\frac{1}{2}),\ (\frac{3}{4},\frac{7}{8})\big\}$.
Define $\mathcal{Y}$  by
\begin{align}\mathcal{Y}(x,t)=& y(x,t)\overline{\chi}(x),\ x\in\Omega.\end{align}
Here $\widetilde{\chi}(x)=1-{\widetilde{\chi}}_0(\frac{x-a}{\varepsilon_0})$, where  $\widetilde{\chi}_0\in C_0^\infty (\mathbb{R},[0,1])$ with
\begin{align}\label{1.7x}\widetilde{\chi}_0(\xi)=\left\{\begin{array}{ll} 1, \ |\xi|\leq \mu_1,\\0, \ |\xi|\geq \mu_2.
                                           \end{array}\right.\end{align}
Then, by equation (\ref{profile1}), it holds that
\begin{align}\partial_t \mathcal{Y}=\partial_{xx}\mathcal{Y}+\mathds{1}_\omega|y|^{p-1}\mathcal{Y}-2\partial_x(\widetilde{\chi}' y)+\widetilde{\chi}{'}{'}y\nonumber.
\end{align}
Therefore, by (ii), we can write
\begin{align}
\|\mathcal{Y}(t)\|_{L^\infty(\Omega)}\leq \Big\|S(t)y(\cdot,0)\widetilde{\chi}+ \int_0^t S(t-t')\big[\mathds{1}_\omega|y|^{p-1}I_{|x-a|\geq\mu_1\varepsilon_0}\mathcal{Y}\nonumber\\
-2\partial_x(\widetilde{\chi}' I_{|x-a|\geq\mu_1\varepsilon_0}y)+\widetilde{\chi}{'}{'}I_{|x-a|\geq\mu_1\varepsilon_0}y\big]dt'\Big\|_{L^\infty(\Omega)},\label{y22}
\end{align}
where $S(t)$ is the heat kernel.

\medskip

Let $\Theta(\epsilon,\sigma_1)=\frac{\epsilon}{4e^{(\sigma_1+1)^{p-1}}}
$ and $\overline{\eta}\in(0,\Theta]$.  We proceed by contradiction, and assume that there exists
$\overline{t}\in (0, T_2]$ where $T_2(\epsilon,\varepsilon_0,\sigma_1)=\min\Big\{1,\Big(\frac{\epsilon}{4C_2e^{(\sigma_1+1)^{p-1}}(\sigma_1+1)
(\frac{1}{\varepsilon_0}+\frac{1}{\varepsilon_0^2})}\Big)^2\Big\}$, such that the conclusion does not hold for all $t\in[0,\overline{t}]$, where
$C_2$ will be defined later.

\medskip

Using (ii) and the continuity of $y$,
this means that there is $\widehat{t}\in (0,\bar t]$ such that the
conclusion holds for all $t\in [0, \widehat{t})$ and fails at $t=\widehat{t}$. This
implies that
\begin{equation}\label{five}
\|\mathcal{Y}(\widehat{t})\|_{L^\infty(\mu_2\epsilon_0 \le |x-a|,\ x\in \Omega)}=\epsilon.
\end{equation}
Therefore, since $\widetilde{\chi}'$ and $\widetilde{\chi}''$ are supported by $\{x\in\Omega;\ \mu_1\varepsilon_0\leq |x-a|\leq \mu_2\varepsilon_0\}$
and satisfy $|\widetilde{\chi}'|\leq C/\varepsilon_0$,
$|\widetilde{\chi}''|\leq C/\varepsilon_0^2$, it holds by (i), (ii) and
(\ref{y22}) that for all $ t\in[0,\widehat{t}]$, we have
%for all $(x,t)\in\Omega\times[0,\widehat{t}]$, we have
\begin{align}
&\|\mathcal{Y}(t)\|_{L^\infty(\Omega)}\nonumber\\\leq \overline{\eta}+ &(\sigma_1+1)^{p-1} \int_0^t \|\mathcal{Y}(t')\|_{L^\infty(\Omega)}dt'+\frac{C(\sigma_1+1)}{\varepsilon_0}\int_0^t\frac{dt'}{\sqrt{t-t'}}+\frac{C(\sigma_1+1)}{\varepsilon_0^2}\int_0^t dt'\nonumber\\
\leq &\overline{\eta}+ (\sigma_1+1)^{p-1} \int_0^t
       \|\mathcal{Y}(t')\|_{L^\infty(\Omega)}dt'+\frac{C(\sigma_1+1)}{\varepsilon_0}\sqrt{\widehat{t}}+\frac{C(\sigma_1+1)}{\varepsilon_0^2}\widehat{t}.\nonumber
       %\ t\in[0,\widehat{t}].\nonumber
\end{align}
In the above estimate, when it comes to bounding $|y|I_{\{|x-a|\ge
  \mu_1\epsilon_0\}}$, we will bound it by $\sigma_1$ if
$|x-a|\le \mu_2\epsilon_0$, and by $\epsilon (\leq 1)$ if $|x-a|\ge
\mu_2\epsilon_0$. Then, by Gronwall's estimate, we have for all $ t\in[0,\widehat{t}]$:
\begin{align}
\|\mathcal{Y}(t)\|_{L^\infty(\Omega)}&\leq e^{(\sigma_1+1)^{p-1}}\Big(\overline{\eta}+\frac{C(\sigma_1+1)}{\varepsilon_0}\sqrt{\widehat{t}}+\frac{C(\sigma_1+1)}{\varepsilon_0^2}\widehat{t}\Big)\nonumber\\
&\leq e^{(\sigma_1+1)^{p-1}}\overline{\eta}+
  C_2e^{(\sigma_1+1)^{p-1}}(\sigma_1+1)\sqrt{\widehat{t}}\Big(\frac{1}{\varepsilon_0}+\frac{1}{\varepsilon_0^2}\Big).\nonumber
      %\ t\in[0,\widehat{t}].
\end{align}
Hence, if $\Theta(\epsilon,\sigma_1)=\frac{\epsilon}{4e^{(\sigma_1+1)^{p-1}}}
$, $T_2(\epsilon,\varepsilon_0,\sigma_1)=\min\Big\{1,\Big(\frac{\epsilon}{4C_2e^{(\sigma_1+1)^{p-1}}(\sigma_1+1)
(\frac{1}{\varepsilon_0}+\frac{1}{\varepsilon_0^2})}\Big)^2\Big\}$,
$\overline{\eta}\in(0,\Theta]$ and $\widehat{t}\leq \overline{t}\leq T_2$,
we have $\|\overline{y}(t)\|_{L^\infty(\Omega)}\leq \epsilon/2$,
for all $t\in[0,\widehat{t}]$.

\medskip

This contradicts  \eqref{five} and completes the proof of
Lemma  \ref{P44}.
\end{proof}

The following transverse crossing lemma also plays an important role to obtain our main result (see Lemma 4.8 in \cite{LinZaag}).

\begin{lemma}\label{lemma 3.2} There exist $K_0^3\geq 1$ and $A_1\geq 1$ such that for any $K_0\geq K_0^3$, $A\geq A_1$ and
 $\varepsilon_0\in(0,\widehat{\delta}]$,  there exists $s_0^4(K_0,\varepsilon_0,A)$ such that for any $s_0\geq s_0^4$, $\mu\in(0,1)$ and $\eta_0\in(0,1]$, we have the following properties:
Assume there exists $\widetilde{s}_*\geq s_0$ such that $y(\widetilde{t}_*)\in S^*(K_0,\varepsilon_0,A,\mu,\eta_0,T,\widetilde{t}_*)$ with $\widetilde{t}_*=T-e^{-\widetilde{s}_*}$ and $(q_0,q_1)(\widetilde{s}_*)\in \partial [-\frac{A}{(\widetilde{s}_*)^2}, \frac{A}{(\widetilde{s}_*)^2}]^2$, then there exists $\delta_1>0$
such that for all $\overline{\delta}\in (0,\delta_1)$, $(q_0,q_1)(\widetilde{s}_*+\overline{\delta})\not\in [-\frac{A}{(\widetilde{s}_*+\overline{\delta})^2}, \frac{A}{(\widetilde{s}_*+\overline{\delta})^2}]^2$.
\end{lemma}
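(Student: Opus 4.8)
The plan is to show that the two neutral/unstable modes $q_0$ and $q_1$ of the linearized operator $\mathcal{L}$ behave, to leading order, as solutions of the scalar ODE $\dot q_m = (1-\tfrac m2)q_m + \text{(lower order)}$ with eigenvalue $1-\tfrac m2 > 0$ for $m\in\{0,1\}$, hence push outward across the boundary of the square $\big[-\tfrac{A}{s^2},\tfrac{A}{s^2}\big]^2$. First I would start from the evolution equation \eqref{2.20} and project it onto $h_0$ and $h_1$ using the formula for $q_m$ in \eqref{r0r1r2}, exactly as in the derivation of the ODE system in \cite{LinZaag}. This yields, for $m=0,1$,
\begin{align}
\dot q_m(s) = \Big(1-\frac m2\Big)q_m(s) + \mathcal{R}_m(s),\nonumber
\end{align}
where $\mathcal{R}_m$ collects the contributions of the potential term $Vq$, the quadratic term $B$, the profile residual $R$, the cut-off/perturbation term $N$, and the commutator between $P_m$ and $\chi_1$. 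Assuming $y(\widetilde t_*)\in S^*(K_0,\varepsilon_0,A,\mu,\eta_0,T,\widetilde t_*)$, each of these is controlled by the shrinking-set bounds in Definition \ref{s} together with Lemma \ref{W^*}: one gets $|\mathcal{R}_m(s)|\le C\,\widetilde s_*^{-2}\log\widetilde s_*/\widetilde s_* + C\,e^{-\mu\widetilde s_*}$, i.e. $o(\widetilde s_*^{-2})$ as $\widetilde s_*\to\infty$, uniformly, provided $s_0^4$ is taken large enough (this is where the smallness of $\varepsilon_0$ and largeness of $K_0$, $A$, $s_0$ enter: the $N$-term is supported away from the origin and decays exponentially, and the $Vq$, $B$ terms on the modes $h_0,h_1$ are genuinely smaller than $s^{-2}$).

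Next I would argue the crossing itself. Suppose $(q_0,q_1)(\widetilde s_*)\in\partial\big([-\tfrac{A}{\widetilde s_*^2},\tfrac{A}{\widetilde s_*^2}]^2\big)$; then at least one coordinate, say $q_{m}(\widetilde s_*)=\varepsilon\,\tfrac{A}{\widetilde s_*^2}$ with $\varepsilon\in\{+1,-1\}$ (the other coordinate being of modulus $\le \tfrac{A}{\widetilde s_*^2}$). Consider $g(s):=\varepsilon\Big(q_m(s)-\varepsilon\tfrac{A}{s^2}\Big)$, so that $g(\widetilde s_*)=0$, and the point is to show $g'(\widetilde s_*)>0$, which forces $q_m(s)$ to exceed $\varepsilon\tfrac A{s^2}$ in modulus immediately after $\widetilde s_*$, i.e. $(q_0,q_1)(\widetilde s_*+\overline\delta)\notin[-\tfrac{A}{(\widetilde s_*+\overline\delta)^2},\tfrac{A}{(\widetilde s_*+\overline\delta)^2}]^2$ for all small $\overline\delta>0$. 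Compute
\begin{align}
g'(\widetilde s_*) = \varepsilon\,\dot q_m(\widetilde s_*) + \frac{2A}{\widetilde s_*^3} = \Big(1-\frac m2\Big)\frac{A}{\widetilde s_*^2} + \varepsilon\,\mathcal{R}_m(\widetilde s_*) + \frac{2A}{\widetilde s_*^3}.\nonumber
\end{align}
Since $m\in\{0,1\}$ gives $1-\tfrac m2\ge \tfrac12$, the main term is $\ge \tfrac{A}{2\widetilde s_*^2}$, while $|\mathcal{R}_m(\widetilde s_*)| = o(\widetilde s_*^{-2})$ and $\tfrac{2A}{\widetilde s_*^3}=o(\widetilde s_*^{-2})$; thus for $s_0^4$ large enough (depending on $K_0,\varepsilon_0,A$) we get $g'(\widetilde s_*)\ge \tfrac{A}{4\widetilde s_*^2}>0$. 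By continuity of $g'$ there is $\delta_1>0$ with $g(s)>0$ on $(\widetilde s_*,\widetilde s_*+\delta_1)$, which is the claimed conclusion. (If both coordinates happen to be on the boundary, the same argument applies to whichever one has the strictly outgoing sign; the argument is symmetric.)

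The main obstacle is the careful bookkeeping of the remainder $\mathcal{R}_m$: one must verify that every term — in particular the perturbation term $N$ coming from the localized nonlinearity $(\mathds 1_\omega-1)|W|^{p-1}W$ and from the cut-off $\chi$, and the projection commutator $[P_m,\chi_1]$ — is genuinely $o(s^{-2})$ and not merely $O(s^{-2})$, since a term of size comparable to $\tfrac{A}{s^2}$ would destroy the sign of $g'(\widetilde s_*)$. This is handled exactly as in the proof of Lemma 4.8 in \cite{LinZaag}: the $N$-contribution is supported in $\{|z|\gtrsim \varepsilon_0 e^{s/2}\}$ and hence exponentially small against $\rho$, the potential and quadratic contributions on the $h_0,h_1$ modes carry an extra power of $s^{-1/2}$ or $\log s/s$, and the $\chi_1$-commutator is again exponentially small because $\chi_1$ is constant $\equiv 1$ on $B(0,K_0\sqrt s)$. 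The only novelty relative to \cite{LinZaag} is that the bounds must hold uniformly for the constructed initial data and for data in a small neighborhood of it, but this is automatic because the only hypothesis used is membership in the shrinking set $S^*$ at time $\widetilde t_*$, which is exactly the common framework set up in Subsection 2.1.
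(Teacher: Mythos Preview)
The paper does not give an explicit proof here but defers to Lemma 4.8 of \cite{LinZaag}; your sketch is precisely the standard transverse-crossing argument used there, and the overall strategy (project \eqref{2.20} onto $h_0,h_1$ to obtain $\dot q_m=(1-\tfrac m2)q_m+\mathcal R_m$, then check the sign of the outward normal derivative at the boundary of the square) is correct and matches the cited proof.

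One point needs adjustment. Your claim that $|\mathcal R_m(s)|=o(s^{-2})$ is too optimistic: the profile residual $R$ defined in \eqref{R} is independent of $q$, hence independent of $A$, and its projection on $h_0$ is of genuine size $C/s^2$ with $C=C(p,K_0)$. This is exactly why the lemma carries the hypothesis $A\ge A_1$, which would be superfluous under your stated bound. The correct estimate is
\[
|\mathcal R_m(s)|\le \frac{C}{s^2},\qquad C\ \text{independent of }A,
\]
where the $Vq$, $B$, $N$ and $[\chi_1,\cdot]$ contributions are indeed $o(s^{-2})$ or exponentially small as you argue, but $R_0$ is only $O(s^{-2})$ (for $m=1$ parity of $R$ in $z$ gives $R_1=0$, so the issue is only $m=0$). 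Your sign computation then becomes
\[
g'(\widetilde s_*)\ \ge\ \Big(1-\frac m2\Big)\frac{A}{\widetilde s_*^2}-\frac{C}{\widetilde s_*^2}+\frac{2A}{\widetilde s_*^3}\ >\ 0
\]
once $A\ge A_1$ with $A_1$ chosen so that $\tfrac12 A_1>C$, and the rest of your argument (continuity of $g'$, exit for small $\overline\delta$) goes through unchanged. Your final remark, that the only input is membership in $S^*$ at time $\widetilde t_*$, is exactly the reason the lemma transfers verbatim from \cite{LinZaag} to the present setting.
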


\subsection{Proof of Proposition \ref{lemma1.3}}

In this subsection, we will follow the almost same techniques with a small modification in the proof of Theorem 1 in \cite{LinZaag}
 to reprove Proposition \ref{lemma1.3} by the lemmas stated in subsection 2.1. The idea stated in this subsection will also be used to prove the
 stability of blowup profile for auxiliary system (\ref{profile1}) in the next section.

\medskip
The proof of Proposition \ref{lemma1.3} can be  divided into the following three steps.

\medskip

{\bf Step 1. Preparation of initial data}

\medskip

Given $a\in \omega$, we consider initial data for system (\ref{profile1}) defined for all $x\in \Omega$ by
\begin{align}\label{initial data}
y_0(x,d_0,d_1)=&T^{-\frac{1}{p-1}}\Big\{\varphi(z,s_0)\chi(8z,s_0)+\frac{A}{s_0^2}(d_0+d_1z)\chi_1(2z,s_0)\Big\}\nonumber\\
=&T^{-\frac{1}{p-1}}\Big\{\varphi\Big(\frac{x-a}{\sqrt{T}},-\log T\Big)\chi_0\Big(\frac{8(x-a)}{\varepsilon_0}\Big)\nonumber\\&+\frac{A}{s_0^2}\Big(d_0+\frac{d_1(x-a)}{\sqrt{T}}\Big)\chi_0\Big(\frac{2|x-a|}{K_0\sqrt{-T\log T}}\Big)\Big\},
\end{align}
where $T>0$ will be sufficiently small,
$s_0=-\log T$, $z=\frac{x-a}{\sqrt{T}}$, $\chi_0$ is defined in (\ref{1.7}), $\chi$ is defined in (\ref{chi}) and $\chi_1$ is defined in (\ref{chi1}).

\medskip

We have the following lemma, which is Lemma 3.1 in \cite{LinZaag}.

\begin{lemma} (Reduction for initial data) \label{initial data1} For
  any  $K_{0}\geq 1$, for any  $\varepsilon_0\in(0,\delta_0]$ and for
  any $A\geq 1$, there exists  $s_0^5(K_0,\varepsilon_0,A)>0$ such
  that for all
$s_0\geq s_0^5$, the
following holds:

\medskip

If initial data for equation (\ref{profile1}) are given by  (\ref{initial data}), then there exists a rectangle
\begin{align}D_T\subset [-2,2]^2,\end{align}
such that

(i) for all $(d_0,d_1)\in D_T$, we have $y_0\in C_0^\infty(\omega)$ and
$$y_0(\cdot,d_0,d_1)\in S^*(K_0,\varepsilon_0,A,1/4,0,T,0),$$
and \begin{align}
\left\{\begin{array}{ll}
|q_2(s_0)|\leq s_0^{-2}\log s_0,\\
$$|q_{-}(z,s_0)|\leq (1+|z|^3) s_0^{-2},\\
$$|q_e(\cdot,s_0)|\leq s_0^{-1/2}.\end{array}\right.\label{step 1}\end{align}

Moreover, the restriction of
$(d_0,d_1) \mapsto (q_0(s_0),q_1(s_0))$ to the boundary of $D_T$ has
degree 1.

(ii) for all $(d_0,d_1)\in D_T$ and for all $x\in \Omega$ with $|x-a|\geq \varepsilon_0/4$,
$$y_0(x,d_0,d_1)=0.$$
\end{lemma}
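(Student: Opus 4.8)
The plan is to verify the three requirements in the statement — membership in $C_0^\infty(\omega)$ together with the shrinking-set conditions, the degree-one property of the boundary map, and the vanishing of $y_0$ far from $a$ — by direct computation on the explicit formula (\ref{initial data}), exactly as in Lemma 3.1 of \cite{LinZaag}. First I would observe that the only parts of $y_0(\cdot,d_0,d_1)$ that are not identically zero live inside $B(a,2\varepsilon_0/8)\cup B(a, K_0\sqrt{-T\log T})$; since $(a-4\varepsilon_0,a+4\varepsilon_0)\subset\omega$ by (\ref{inside}), and since $K_0\sqrt{s_0}<\varepsilon_0 e^{s_0/2}$ for $s_0$ large (i.e.\ $K_0\sqrt{-T\log T}<\varepsilon_0$), both balls sit inside $\omega$, so $y_0\in C_0^\infty(\omega)$, and moreover $y_0(x,d_0,d_1)=0$ whenever $|x-a|\geq\varepsilon_0/4$, which is exactly item (ii). The cut-off $\chi(8z,s_0)$ was designed precisely so that its support is contained in the $\varepsilon_0/4$-region.

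Next I would pass to the self-similar variable $s_0=-\log T$, $z=(x-a)/\sqrt T$, so that $w(z,s_0)=\varphi(z,s_0)\chi_0(8z\sqrt T/\varepsilon_0)+\frac{A}{s_0^2}(d_0+d_1z)\chi_1(2z,s_0)$ and hence $q(z,s_0)=w-\varphi=\varphi(\chi(8z,s_0)-1)+\frac{A}{s_0^2}(d_0+d_1z)\chi_1(2z,s_0)$. On the support of $q_b=\chi_1 q$, which is contained in $B(0,2K_0\sqrt{s_0})$, the term $\varphi(\chi(8z,s_0)-1)$ vanishes identically for $s_0$ large (since $2K_0\sqrt{s_0}<\varepsilon_0 e^{s_0/2}/8$ eventually), so inside the blowup region $q_b(z,s_0)=\frac{A}{s_0^2}(d_0+d_1z)\chi_1(2z,s_0)$. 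Projecting onto the Hermite modes $h_m$: because $\chi_1(2z,s_0)\to 1$ pointwise and in $L^2_\rho$-controlled fashion, and because $d_0+d_1z$ is a combination of $h_0$ and $h_1$ only, one gets $q_0(s_0)=\frac{A}{s_0^2}d_0(1+o(1))$, $q_1(s_0)=\frac{A}{s_0^2}d_1(1+o(1))$, while $q_2(s_0)$, $q_-(z,s_0)$ and $q_e(\cdot,s_0)$ consist only of the exponentially small tails coming from $1-\chi_1(2z,s_0)$ together with the genuinely small contribution $\varphi(\chi(8z,s_0)-1)(1-\chi_1)$; all of these are bounded by any prescribed negative power of $s_0$ for $s_0$ large, in particular by the bounds in (\ref{step 1}) and (using $A\ge1$) by the stronger bounds $As_0^{-2}$, $A^2s_0^{-2}\log s_0$, $As_0^{-2}(1+|z|^3)$, $A^2s_0^{-1/2}$ required in $V_{K_0,A}(s_0)$. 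The second, ``$(ii)$''-type, condition of Definition \ref{s} at $t=0$ (i.e.\ $|y_0|\le\eta_0=0$ on $|x-a|\ge\mu\varepsilon_0$ with $\mu=1/4$) is immediate from item (ii) above. This establishes (i) except for the degree statement.

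For the degree-one claim I would argue as follows: from $q_0(s_0)=\frac{A}{s_0^2}d_0(1+\alpha_0(s_0))$ and $q_1(s_0)=\frac{A}{s_0^2}d_1(1+\alpha_1(s_0))$ with $|\alpha_j(s_0)|\le\tfrac12$ for $s_0$ large, the map $(d_0,d_1)\mapsto(q_0(s_0),q_1(s_0))$ is, up to the fixed positive scaling factor $A/s_0^2$, a small perturbation of the identity on $[-2,2]^2$; choosing $D_T$ to be a rectangle slightly smaller than $[-2,2]^2$ (so that $(q_0,q_1)$ hits $\pm A s_0^{-2}$ transversally on $\partial D_T$ — this is where the precise choice of $D_T$, following \cite{LinZaag}, enters) and invoking homotopy invariance of the topological degree, the restriction of this map to $\partial D_T$ is homotopic to the identity and therefore has degree $1$. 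The main obstacle — though it is a technical rather than conceptual one — is the bookkeeping of the cut-off errors: one must check that every discarded term ($\varphi(\chi(8z,\cdot)-1)$, the tails of $1-\chi_1$, the mismatch between $\chi_1(2z,\cdot)$ and $\chi_1(z,\cdot)$ used in the definition of the projections, and the $\chi_1(2z,\cdot)$-truncation of the polynomial $d_0+d_1 z$) is super-polynomially small in $s_0$, uniformly in $(d_0,d_1)\in[-2,2]^2$, so that all the inequalities in (\ref{step 1}) and in Definition \ref{s} hold and the perturbations $\alpha_j(s_0)$ are genuinely small; this is done exactly as in \cite{LinZaag} and uses only the Gaussian weight $\rho$ and the explicit supports of the $\chi$'s.
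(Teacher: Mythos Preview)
The paper does not give its own proof of this lemma; it simply records it as ``Lemma 3.1 in \cite{LinZaag}'' and moves on. Your proposal reproduces exactly the standard argument from that reference: compute $q(\cdot,s_0)$ explicitly from (\ref{initial data}), observe that on $\mathrm{supp}\,\chi_1$ the term $\varphi(\chi(8z,s_0)-1)$ vanishes so the map $(d_0,d_1)\mapsto(q_0,q_1)(s_0)$ is linear and diagonal with positive coefficients close to $A/s_0^2$, and read off the remaining bounds and the support property directly from the cut-offs. This is correct and matches the cited proof.

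One small inaccuracy worth flagging: your claim that the discarded term $\varphi(\chi(8z,s_0)-1)(1-\chi_1)$ is ``bounded by any prescribed negative power of $s_0$'' is slightly overstated. On the region $|z|\ge \varepsilon_0 e^{s_0/2}/8$ the profile contribution $f(z/\sqrt{s_0})$ is indeed exponentially small, but $\varphi$ also carries the constant shift $\kappa/(2ps_0)$, so the term is only $O(s_0^{-1})$, not super-polynomially small. This does not affect the argument, since $O(s_0^{-1})\le s_0^{-1/2}$ for $s_0$ large, which is all that (\ref{step 1}) and Definition~\ref{s} require; just tighten the wording when you write it out.
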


{\bf Step 2. Reduction to a finite-dimensional problem}

\medskip

Let $K_0\geq K_0^*:=\max\{K_0^1,K_0^2,K_0^3\}$, $\varepsilon_0\in(0,\widehat{\delta}]$, $A\geq A^*:=\max\{A_1,\overline{A}(1,K_0,\varepsilon_0)\}$,
$s_0=-\log T\geq s_0^*:=\max\{s_0^1,s_0^2,s_0^3,s_0^4,s_0^5,\overline{s}_0(1,A,K_0,\varepsilon_0)\}$ and $\eta_0\in(0,1]$.
Consider $(d_0,d_1)\in D_T$. By the classical theory of parabolic equations, we can define a
maximal solution $y$ to equation (\ref{profile1}) with initial data (\ref{initial data}), and a maximal time $t_*(d_0,d_1)\in [0,T]$ such that
\begin{align}
\forall t\in[0,t_*),\ y(t)\in S^*(K_0,\varepsilon_0,A,1/2,\eta_0,T,t).
\end{align}
Then, one of the following possibilities occurs:

\medskip

(1) either $t_*=T$,

(2) or $t_*<T$ and from continuity, $y(t_*)\in \partial S^*(K_0,\varepsilon_0,A,1/2,\eta_0,T,t_*)$,
in the sense that when $t=t_*$, one `$\leq$' symbol in the definition
of $S^*(K_0,\varepsilon_0,A,1/2,\eta_0,T,t_*)$ is replaced by the symbol ``$=$''.

\medskip

Our aim is to show that for $\mu=1/2$, for all $K_0$ and $A$ large enough, $\varepsilon_0$ and $T$ small enough and for any $\eta_0\in (0,1]$,
one can find $(d_0,d_1)\in D_T$ such that $t_*(d_0,d_1)=T$.

\medskip

We argue by contradiction
% to prove Theorem \ref{Pros1},
and assume that for all $(d_0,d_1)\in D_T$, $t_*(d_0,d_1)<T$.

\medskip

On one hand, by Lemma \ref{initial data1} and Lemma \ref{qa}, we can derive the following proposition.

\begin{proposition}\label{qb}For any $K_0\geq K_0^*$, for any $\varepsilon_0\in(0,\widehat{\delta}]$,  for any $A\geq A^*$,
 for any  $s_0\geq s_0^*$ and for any  $\eta_0\in(0,1]$,
we have the following property: if $(d_0,d_1)\in D_T$ and
            if for $s_1\geq s_0$, we have $\forall s\in[s_0,s_1]$,
            $y(t)\in S^*(K_0,\varepsilon_0,A,1/2,\eta_0,T,t)$ with $t=T-e^{-s}$, then, for all $s\in[s_0,s_1]$,
            \begin{align}\left\{\begin{array}{ll}
            \label{p2}|q_2(s)|\leq {A^2}s^{-2}\log s-s^{-3},\\
             |q_{-}(z,s)|\leq\displaystyle {\frac{A}{2}}(1+|z|^3)s^{-2},\\
            \|q_e(s)\|_{L^\infty}\leq \displaystyle\frac{A^2}{2\sqrt{s}}. \end{array}\right.\end{align}
\end{proposition}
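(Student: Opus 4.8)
The plan is to deduce Proposition~\ref{qb} as an immediate consequence of the reduction for initial data (Lemma~\ref{initial data1}) together with the a priori estimate in the blowup region (Lemma~\ref{qa}), both applied with the specific choice $\widetilde{A}=1$. The whole argument is a bookkeeping composition: Lemma~\ref{initial data1} provides the ``good'' bounds on the nonlinear coordinates $q_2,q_-,q_e$ at the initial time $s_0$, and Lemma~\ref{qa} propagates them forward in time as long as $y(t)$ stays in the shrinking set.

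First I would fix $K_0\ge K_0^*$, $\varepsilon_0\in(0,\widehat{\delta}]$, $A\ge A^*$, $s_0=-\log T\ge s_0^*$ and $\eta_0\in(0,1]$, and take $(d_0,d_1)\in D_T$. By the very definitions $A^*=\max\{A_1,\overline{A}(1,K_0,\varepsilon_0)\}$ and $s_0^*\ge\max\{s_0^5,\overline{s}_0(1,A,K_0,\varepsilon_0)\}$, we have in particular $A\ge\overline{A}(1,K_0,\varepsilon_0)$, $s_0\ge s_0^5$ and $s_0\ge\overline{s}_0(1,A,K_0,\varepsilon_0)$. Hence Lemma~\ref{initial data1} applies (its threshold $s_0^5$ is met), and Lemma~\ref{qa} applies with $\widetilde{A}=1$ (its thresholds $\overline{A}(1,\cdot)$ and $\overline{s}_0(1,\cdot)$ are met). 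This checks that the parameter choices are mutually compatible, which is the only subtle point.

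Next I would verify that the initial data $y_0(\cdot,d_0,d_1)$ defined by \eqref{initial data} satisfies hypothesis \eqref{property} of Lemma~\ref{qa} at $s=s_0$ with $\widetilde{A}=1$. Indeed, Lemma~\ref{initial data1}(i) gives $y_0(\cdot,d_0,d_1)\in S^*(K_0,\varepsilon_0,A,1/4,0,T,0)$, so by item (i) of Definition~\ref{s} we have $q(s_0)\in V_{K_0,A}(s_0)$, whence $|q_m(s_0)|\le As_0^{-2}$ for $m=0,1$; and \eqref{step 1} gives exactly $|q_2(s_0)|\le s_0^{-2}\log s_0$, $|q_-(z,s_0)|\le(1+|z|^3)s_0^{-2}$ and $\|q_e(s_0)\|_{L^\infty}\le s_0^{-1/2}$. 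Together these are precisely the four inequalities in \eqref{property} with $\widetilde{A}=1$.

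Finally, since by assumption $y(t)\in S^*(K_0,\varepsilon_0,A,1/2,\eta_0,T,t)$ for all $t=T-e^{-s}$ with $s\in[s_0,s_1]$, I would apply Lemma~\ref{qa} with $\widetilde{A}=1$ and $\mu=1/2$ to conclude that \eqref{property1} holds for all $s\in[s_0,s_1]$; this is exactly the desired estimate \eqref{p2}. I do not expect any genuine obstacle here: the only thing to be careful about is the chain of quantifiers over $K_0^*$, $A^*$, $s_0^*$ and the normalization $\widetilde{A}=1$ in Lemma~\ref{qa}, and these are already reconciled in the definitions of $A^*$ and $s_0^*$ recorded at the start of Step~2.
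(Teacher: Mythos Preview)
Your proposal is correct and follows exactly the same approach as the paper, which simply states that Proposition~\ref{qb} follows from Lemma~\ref{initial data1} and Lemma~\ref{qa}. You have spelled out the bookkeeping (checking that the thresholds $K_0^*$, $A^*$, $s_0^*$ accommodate the choice $\widetilde A=1$ in Lemma~\ref{qa}, and that \eqref{step 1} furnishes hypothesis \eqref{property}) in more detail than the paper does, but the logical structure is identical.
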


On the other hand, write $\mathcal{R}$ for $\{x\in \Omega;\ |x-a|\geq \varepsilon_0/{2}\}$.
We have that
\begin{align}\label{P2}\mbox{if}\  x\in \mathcal{R},\ \mbox{then}\
  |y(x,t_*)|\leq \frac{\eta_0}{2}, \end{align}
provided that the parameters satisfy some conditions.

\medskip

Indeed, we consider $K_0\geq K_0^*$,  $\varepsilon_0\in(0,\widehat{\delta}]$, $A\geq A^*$, $\eta_0\in(0,1]$,
and
$$s_0\geq s_1^*:=\max{\Big\{s_0^*(K_0,\varepsilon_0,A),
 -\log\Big(T_2\Big(\frac{\eta_0}{2},\varepsilon_0,C_1(K_0) \Big[\frac{(p-1)^2\big(\frac{\varepsilon_0}{4}\big)^2}{8p|\log\frac{\varepsilon_0}{4}\Big|}\Big]^{-\frac{1}{p-1}}\Big)\Big\}}.$$
 Applying
Lemma \ref{estimate1}, we see that if $y(t)\in S^*(K_0,\varepsilon_0,A,1/2,\eta_0,T,t)$ for all
$t\in[0,t_*]$, then for $x\in \Omega$ with
 $$\frac{\varepsilon_0}{4}\leq|x-a|\leq \frac{\varepsilon_0}{2},\ \forall \ t\in[0,t_*],$$$$\ |y(x,t)|\leq C_1(K_0)|y^*({x})|\leq C_1(K_0) \Big[\frac{(p-1)^2\Big(\frac{\varepsilon_0}{4}\Big)^2}{8p|\log\frac{\varepsilon_0}{4}\Big|}\Big]^{-\frac{1}{p-1}}.$$
Using  Lemma \ref{initial data1}, we see that for all $x\in \Omega\ \mbox{with}\ |x-a|\geq \frac{\varepsilon_0}{4}$, $y(x,0)=0$.

\medskip

Therefore, Lemma \ref{P44} applies with $(\mu_1,\mu_2)=(\frac{1}{4},\frac{1}{2})$,
$\epsilon=\frac{\eta_0}{2}$, $\overline{\eta}=0$ and
$\sigma_1=C_1(K_0)\Big[\frac{(p-1)^2\big(\frac{\varepsilon_0}{4}\big)^2}{8p|\log\frac{\varepsilon_0}{4}\Big|}\Big]^{-\frac{1}{p-1}}$, and we see that
 \begin{align}\label{P5}\forall x\in \Omega\ \mbox{with}\ |x-a|\geq\frac{\varepsilon_0}{2}, \forall \ t\in[0,t_*],\ |y(x,t)|\leq \frac{\eta_0}{2}.\end{align}

Hence, (\ref{P2}) holds.\\

{\bf Step 3. Final argument for Proposition \ref{lemma1.3}}

\medskip

In this step, we shall conclude the proof of Proposition \ref{lemma1.3}}
 thanks to a topological argument.
 % \footnote{This is the place to
 %  mention the (local in time) solution to the Cauchy problem, with
 %  maybe some regularity estimates for the gradient, and maybe for the Laplacian.}

\medskip

  We fix
  $K_0\geq K_0^*$, $\varepsilon_0\in(0,\widehat{\delta}]$, $A\geq A^*$, $\eta_0\in(0,1]$ and $S_0\geq {s}_1^*$.
  Take then $s_0\geq S_0$ and introduce $T=e^{-s_0}$.
  % \footnote{We should solve the Cauchy problem
  %   here, and give some regularity estimates}

  \medskip

Note first that for any $(d_0,d_1)\in D_T$ defined in Lemma
\ref{initial data1}, equation (\ref{profile1}) fitted with initial data
\eqref{initial data} has a unique solution $y(\cdot,d_0,d_1)\in
C([0,t^*),
L^\infty(\Omega))$ for some maximal $t^*=t^*(d_0,d_1)\in(0,+\infty]$,
thanks to a classical fixed point argument. With
some classical parabolic estimates, we easily prove that $\nabla y (\cdot,d_0,d_1) \in
C([0,t^*), L^\infty(\Omega))$. Using Lemma \ref{initial data1}, we see that
$y_0(\cdot,d_0,d_1)\in S^*(K_0,\varepsilon_0,A,1/4,0,T,0)$. Therefore, we may introduce the largest
time $t_*=t_*(d_0,d_1)\in [0,\min(T,t^*(d_0,d_1))]$ such that for all $t\in
[0,t_*)$, $y(t,d_0,d_1)\in S^*(K_0,
\epsilon_0,A,1/2,\eta_0, T,t)$. Then, 2 cases may occur:\\
(i) either $t_*(d_0,d_1) = T$,\\
(ii) or $t_*(d_0,d_1) <T$, and in that case, one of the
``$\le$'' signs appearing in Definition \ref{s} where $S^*(K_0,
\epsilon_0,A,1/2,\eta_0, T,t)$ is
introduced should be replaced by a ``$=$'' sign.

\medskip

We proceed in two steps:\\
1. we argue by contradiction and prove the existence of some
$(d_0,d_1)\in D_T$ such that $t_*(d_0,d_1) = T$.\\
2. we consider such a $(d_0,d_1)\in D_T$ and show that the
maximal existence time of $y(\cdot,d_0,d_1)$ is $T$, namely that $t^*(d_0,d_1)=T$. Then, we conclude the proof of Proposition \ref{lemma1.3}.

\bigskip

\textbf{1. A topological argument}

\bigskip

We proceed by contradiction, and assume that for any $(d_0,d_1)\in
D_T$, $t_*(d_0,d_1) <T$. By definition of $t_*(d_0,d_1)$, Proposition
\ref{qb} applies and estimate (\ref{P2}) holds, leading to the fact the
equality case mentioned in item 2 above at time $t=t_*$ concerns
only the first two components mentioned in Definition \ref{s}, namely
that
%
% \medskip
%
% We argue by contradiction, and assume that that for any $(d_0,d_1)\in
% D_T$, the following holds:\\
% (i) either $t^*(d_0,d_1)<+\infty$,\\
% (ii) or there exists some $t \in(0, t^*(d_0,D_1))$ such that  $y_0(t,d_0,d_1)\not
% \in S^*(t)$.
%
% \medskip
%
% We claim that case (ii) always occurs. Indeed, if ever case (ii)
% doesn't hold, then for for all $t \in(0, t^*(d_0,D_1))$, it holds that  $y_0(t,d_0,d_1)\not
% \in S^*(t)$. Using Lemma \ref{W^*}, we see that for some $C(p)>0$ and
% for all $s\ge s_0$, $\|W(s)\|_{L^\infty((\Omega-a)e^{\frac s2})} \le
% C(p)$, where $W(y,s)$ is defined in \eqref{W1}.
%
% .....
%
% then case (i) should hold by the former sentence. From
% the contraction argument used for the solution of the Cauchy problem
% in $L^\infty(\Omega)$,  this means that
% $\|y(t,d_0,d_1)\|_{L^\infty}\to \infty$ as $t\to t^*(d_0,d_1)$.
%
% .....
%
%   We argue by contradiction: According to Lemma \ref{initial data1},  for all $(d_0,d_1)\in D_T$,
%   $y_0(\cdot,d_0,d_1)\in S^*(0)$. We suppose then that for each
% $(d_0,d_1)\in D_T$, there exists $s>s_0$ such that $y(t) \not\in S(t)$ ($t=T-e^{-s}$). Let $s_*(d_0,d_1)$ be the infimum of all these $s$.
%
%  Applying Proposition \ref{qb} and
%  (\ref{P2}), we see that $y(t_*)$ can leave $S^*(t_*)$ only by its
%  first two components. Hence,
\[
 (q_0,q_1)(d_0,d_1,s_*(d_0,d_1))\in \partial [-\frac{A}{(s_*)^2},\frac{A}{(s_*)^2}]^2
 \mbox{ where } s_*=-\log(T-t_*).
 \]
%  \[
%  (q_0,q_1)(d_0,d_1,s_*(d_0,d_1))\in \partial S^*(t_*)(s_*(d_0,d_1))
%  \mbox{ where } s_*=-\log(T-t_*).
%  \]
% We see from Definition \ref{s} of $S^*(t_*)$ that only  the components $q_0(s_*)$ or $q_1(s_*)$ may touch
% the boundary of $[-\frac{A}{(s_*)^2},\frac{A}{(s_*)^2}]$.
In particular, we can define the rescaled flow:
$$\Phi: D_T\rightarrow \partial ([-1,1])^2,$$
$$(d_0,d_1)\rightarrow \frac{s_*(d_0,d_1)^2}{A}(q_0,q_1)(d_0,d_1,s_*(d_0,d_1)).$$
Then, we have the following proposition.
%%%%%%%%%%%%%%%%%%%%%
%%%%%%%%%%%%%%%%%%%%%
\begin{proposition}\label{PP}
  $ $\\
1) $\Phi$ is a continuous mapping from $D_T$ to $\partial ([-1,1]^2)$.\\
2) The restriction of $\Phi$ to the boundary $\partial D_T$  has
degree 1.
\end{proposition}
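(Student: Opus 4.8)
The plan is to establish the two claims in a manner parallel to the classical topological argument of Bricmont--Kupiainen and Merle--Zaag, as already employed in \cite{LinZaag}, but taking care that the extra cut-off terms $N(z,s)$ and the outer/auxiliary components $(q_2, q_-, q_e)$ enter only as lower-order contributions at the boundary crossing time $s_*$. First I would verify that $\Phi$ is well-defined: for $(d_0,d_1)\in D_T$ we are in case (ii) (by the contradiction hypothesis $t_*(d_0,d_1)<T$), and the argument leading to the displayed statement just above the proposition shows that the only ``$\le$'' turned into ``$=$'' at $t=t_*$ is one of the first two components, i.e.\ $(q_0,q_1)(d_0,d_1,s_*)\in\partial[-A s_*^{-2}, A s_*^{-2}]^2$. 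Hence $\frac{s_*^2}{A}(q_0,q_1)(d_0,d_1,s_*)\in\partial([-1,1]^2)$ and $\Phi$ indeed lands in $\partial([-1,1]^2)$.

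For part 1), continuity of $\Phi$, the key steps are: (a) continuity of $(d_0,d_1)\mapsto t_*(d_0,d_1)$ (equivalently $s_*$), which follows from the transverse crossing Lemma \ref{lemma 3.2} — once $(q_0,q_1)$ hits the boundary of the square $[-A s^{-2}, A s^{-2}]^2$ it exits strictly, so $t_*$ is characterized by a transversal crossing and therefore depends continuously on the initial parameters, using continuous dependence of the solution $y(\cdot,d_0,d_1)$ of \eqref{profile1} on $(d_0,d_1)$ in $C([0,t],L^\infty(\Omega))$ (and in the stronger norms needed to control the similarity-variable projections) on compact time intervals; and (b) continuity of $(d_0,d_1)\mapsto (q_0,q_1)(d_0,d_1,s_*(d_0,d_1))$, which combines (a) with the joint continuity in $(s,d_0,d_1)$ of the projections $q_m(s)$ defined by the integrals in \eqref{r0r1r2}. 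Composing with the continuous map $s\mapsto s^2/A$ gives continuity of $\Phi$.

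For part 2), the degree computation, I would use the structure of the initial data \eqref{initial data}: at $s=s_0$ the solution starts inside the interior of the shrinking set by Lemma \ref{initial data1}(i), so $t_*>0$ and actually $s_*(d_0,d_1)=s_0$ precisely when $(q_0,q_1)(s_0)\in\partial[-A s_0^{-2}, A s_0^{-2}]^2$, i.e.\ when $(d_0,d_1)\in\partial D_T$ (by the ``degree 1'' statement in Lemma \ref{initial data1}, the map $(d_0,d_1)\mapsto(q_0(s_0),q_1(s_0))$ sends $\partial D_T$ onto the boundary of the square with degree $1$). For $(d_0,d_1)\in\partial D_T$ one has $s_*=s_0$, hence $\Phi(d_0,d_1)=\frac{s_0^2}{A}(q_0(s_0),q_1(s_0))$, which is exactly the degree-$1$ boundary map of Lemma \ref{initial data1}(i) rescaled to $\partial([-1,1]^2)$; therefore $\Phi|_{\partial D_T}$ has degree $1$. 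Here I must check that no point of $\partial D_T$ can ``survive'' past $s_0$, i.e.\ that the crossing at $s_0$ is genuinely transversal — this is again Lemma \ref{lemma 3.2}, which guarantees that once on the boundary of the first-two-component square the trajectory immediately leaves it.

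The main obstacle is step (a) of part 1): proving that $t_*$ depends continuously on $(d_0,d_1)$. This requires combining the strict transversality from Lemma \ref{lemma 3.2} with continuous dependence of the rescaled solution $W(z,s)$ — and hence of all the components $q_0,q_1,q_2,q_-,q_e$ — on the parameters, uniformly on the relevant (growing) spatial regions $B(0,2K_0\sqrt s)$; care is needed because the similarity variables and the cut-offs $\chi_1(z,s)$, $\chi(z,s)$ are time-dependent, and because the ``regular region'' estimate \eqref{P2} must hold uniformly so that the boundary crossing cannot occur in the second component (ii) of Definition \ref{s}. Once these continuity facts are in place, parts 1) and 2) follow exactly as in \cite{LinZaag}, and the contradiction with Proposition \ref{PP} (a continuous map $D_T\to\partial([-1,1]^2)$ whose restriction to $\partial D_T$ has nonzero degree cannot exist) then yields the existence of $(d_0,d_1)\in D_T$ with $t_*(d_0,d_1)=T$.
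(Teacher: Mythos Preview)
Your proposal is correct and follows exactly the standard topological argument that the paper implicitly relies on (the paper itself does not give an explicit proof of Proposition~\ref{PP}, stating it and deferring to the techniques of \cite{LinZaag}, \cite{Zaag1}, \cite{Bricmont}). The two ingredients you isolate---transversality via Lemma~\ref{lemma 3.2} for the continuity of $s_*(d_0,d_1)$, and the degree-$1$ statement of Lemma~\ref{initial data1}(i) combined with $s_*=s_0$ on $\partial D_T$---are precisely the ones used in those references, and your identification of the continuity of $t_*$ as the main technical point is accurate.
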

%%%%%%%%%%%%%%%%%%%%%
%%%%%%%%%%%%%%%%%%%%%

From this proposition, a contradiction follows thanks to the following statement which is a consequence of Brouwer's lemma,
and could be obtained directly by the proof of of Theorem 12.8  in Smoller \cite{Sspri94} (pp. 136), with a natural adaptation.

 \begin{proposition}\label{S94}  Consider $\Psi$ a continuous
  function from the unit ball of $\mathbb{R}^N$ to its boundary. Then, its restriction
  to the boundary has degree zero.\end{proposition}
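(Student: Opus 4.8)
The plan is to derive Proposition \ref{S94} from the homotopy invariance of the Brouwer degree, by contracting the ball radially to its center. Write $B$ for the closed unit ball of $\mathbb{R}^N$ and $S=\partial B$ for the unit sphere. I will use two standard facts: (a) every continuous map $g:S\to S$ has a well-defined integer degree $\deg g$, invariant under continuous homotopies through maps $S\to S$; and (b) a non-surjective continuous map $S\to S$ has degree zero, in particular any constant map does. (Here $N\ge 2$, which is the only case relevant for the application, where $N=2$ and the degree is the usual winding number of a loop in the plane.)

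Given a continuous $\Psi:B\to S$, the key step is to introduce the homotopy $H:S\times[0,1]\to S$ defined by $H(x,t):=\Psi(tx)$. Since $tx\in B$ for every $(x,t)\in S\times[0,1]$ and $\Psi$ is continuous with values in $S$, the map $H$ is a continuous homotopy through maps $S\to S$. It connects $H(\cdot,1)=\Psi|_{S}$, the restriction of $\Psi$ to the boundary, to $H(\cdot,0)\equiv\Psi(0)$, a constant map. Hence, by fact (a), $\deg(\Psi|_{S})=\deg\big(H(\cdot,0)\big)$, and by fact (b) the latter is zero; this is exactly the assertion of Proposition \ref{S94}.

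This is the same mechanism used in the proof of Theorem 12.8 of \cite{Sspri94}: there one contracts $B$ radially onto its center to rule out the existence of a retraction of $B$ onto $S$; the ``natural adaptation'' mentioned above is only to notice that the radial homotopy already forces the boundary restriction to have degree zero, without needing that restriction to be the identity. Accordingly there is no genuine obstacle here: the entire content is the availability of the Brouwer degree for maps between spheres together with its homotopy invariance and its vanishing on constant (non-surjective) maps, which is precisely the standard machinery developed in the cited pages.
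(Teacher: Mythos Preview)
Your proof is correct and is exactly the standard argument the paper is alluding to: the paper gives no proof of its own but merely cites Theorem~12.8 of Smoller \cite{Sspri94} ``with a natural adaptation,'' and your radial homotopy $H(x,t)=\Psi(tx)$ together with homotopy invariance of the degree is precisely that adaptation. Nothing is missing.
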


  This means that there exists $(d_0,d_1)\in D_T$ such that
  $t_*(d_0,d_1) =T$. By definition of $t_*$, this means that $y(t)\in
   S^*(K_0,\varepsilon_0,A,1/2,\eta_0,T,t)$ for all $t\in [0,T)$.
%and $T=t_*(d_0,d_1)$.

  \bigskip

  \textbf{2. Conclusion of the proof of Proposition \ref{lemma1.3}}

\bigskip

  Let us summarize the result we have just proved:

  \medskip

If we fix
$K_0\geq K_0^*$, $\varepsilon_0\in(0,\widehat{\delta}]$, $A\geq A^*$, $\eta_0\in(0,1]$, $s_0\geq {s}_1^*$ and $T=e^{-s_0}$,
then we see that for some $(d_0,d_1)\in D_T$, equation (\ref{profile1})
with initial data (\ref{initial data}) has a solution $y$ such that
%$T=t_*(d_0,d_1)$ and
for any $t\in[0,T)$, $y(t)\in S^*(K_0,\varepsilon_0,A,1/2,\eta_0,T,t)$.
By  Lemma \ref{W^*}, we see that for any $s\geq-\log T$ and for any $z\in \mathbb{R}$,
$$|q(z,s)|\leq \frac{CA^2}{\sqrt{s}}.$$
By (\ref{fnew}), (\ref{w}), (\ref{w1}) and (\ref{w2}), it holds that  for
any $s\geq-\log T$ and for any $z$ with $|z|\leq
\varepsilon_0e^{s/2},$ we have
$$\Big|W(z,s)-f(\frac{z}{\sqrt{s}})\Big|\leq \frac{CA^2}{\sqrt{s}}+\frac{C}{s}.$$
Hence, if $T$ is small enough, we obtain from (\ref{W1}) and (\ref{W1j}) that for any
$t\in[0,T)$ and for any $x\in \Omega$ with $|x-a|\leq \varepsilon_0 $,
\begin{align}\label{e*}\Big|(T-t)^{1/(p-1)} y(x,t)-f\Big(\frac{x-a}{\sqrt{(T-t)|\log(T-t)|}}\Big)\Big|\leq \frac{C(A)}{\sqrt|\log(T-t)|}.\end{align}
This implies (\ref{1.3*}).

\medskip

This also implies that $|y(a,t)|\sim [(p-1)(T-t)]^{-1/(p-1)}$ as
$t\to T$. Hence, $y$ blows up in finite time $T$, and $a$ is a blowup
point of $y$. In particular, this means that $t^*(d_0,d_1)$, the maximal existence
time of $y$, is $T$.

% Now just take $a_j \equiv a$ and $t_j = T-\epsilon_j$, $j=1,2,\cdots$, with $\epsilon_j$
% being any sequence converging to $0$.
% Then, \begin{align}\nonumber\Big| \frac{y(a,t_j)}{[(p-1)(T-t_j)]^{-1/(p-1)}}-1\Big|\leq \frac{C(A)}{\sqrt{|\log(T-t_j)|}}\rightarrow 0,\ \mbox{as}\ j\rightarrow +\infty,\end{align}
% that is, $|y(a,t_j)|\sim [(p-1)(T-t_j)]^{-1/(p-1)}$ as $j\rightarrow +\infty$. Hence, $y$ blows up at point $a$ in time $T$.

\medskip

It remains to prove that all points $x_0\neq a$ are not blowup points.

\medskip

We know from item (ii) in Definition \ref{s} that for all $x\in \Omega$ with $|x-a|\geq \frac{\varepsilon_0}{2},\ |y(x,t)|\leq \eta_0$.
Thus,  any $x_0\in \Omega$ with $|x_0-a|\geq \frac{\varepsilon_0}{2}$ is not a blowup point. Now, if $0<|x_0-a|\leq \varepsilon_0/2$, the following result from Giga and Kohn \cite{Giga-1} allows us to conclude
Proposition \ref{lemma1.3}.

\begin{proposition}\label{GK}
For all $K>0$, there is $\eta_1>0$ such that if $\nu(\xi,\tau)$ solves
\begin{align}\label{a2.5}|\nu_t-\Delta \nu|\leq K(1+|\nu|^p),\end{align}
and satisfies
$$|\nu(\xi,\tau)|\leq \eta_1(T-t)^{-\frac{1}{p-1}},$$
for all $(\xi,\tau)\in B(a,r)\times [T-r^2,T)$ for some $a\in \mathbb{R}$ and $0<r\leq 1$, then $\nu$ does not blows up at $(a,T)$.
\end{proposition}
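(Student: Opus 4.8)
\emph{Proof strategy.} Proposition \ref{GK} is a classical result of Giga and Kohn \cite{Giga-1}, invoked above as a black box, so in the present paper it is simply quoted. If one wanted a self-contained argument within the framework of this section, the natural route is to run the similarity-variable analysis of (\ref{W1})--(\ref{W1j}), but now \emph{centered at the candidate point} $(a,T)$. Setting
\[
w(y,s)=(T-\tau)^{\frac{1}{p-1}}\,\nu\bigl(a+y\sqrt{T-\tau},\,\tau\bigr),\qquad s=-\log(T-\tau),
\]
the hypothesis $|\nu(\xi,\tau)|\le\eta_1(T-\tau)^{-1/(p-1)}$ on $B(a,r)\times[T-r^2,T)$ becomes $|w(y,s)|\le\eta_1$ for $|y|\le re^{s/2}$ and $s\ge s_0:=-\log r^2\ge0$, a region that exhausts $\m R$ as $s\to\infty$, and a direct computation from (\ref{a2.5}) gives, on this region,
\[
\Bigl|\partial_s w-\partial_y^2 w+\tfrac12\,y\,\partial_y w+\tfrac{1}{p-1}\,w\Bigr|\le Ke^{-\frac{p}{p-1}s}+K|w|^{p-1}|w|\le Ke^{-\frac{p}{p-1}s}+K\eta_1^{p-1}|w|.
\]

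The structural point is that $\mathcal{L}_0:=\partial_y^2-\tfrac12 y\,\partial_y-\tfrac1{p-1}$ (which is the operator $\mathcal{L}$ of (\ref{LD}) shifted by $-\tfrac{p}{p-1}$) has spectrum contained in $(-\infty,-\tfrac1{p-1}]$ on $L^2_\rho(\m R)$: the equilibrium $0$ of the rescaled flow is exponentially stable. Choosing $\eta_1$ small enough that $K\eta_1^{p-1}$ is dominated by $\tfrac1{p-1}$ turns the right-hand side above into a genuine perturbation. Multiplying by a cut-off supported in $\{|y|\le re^{s/2}\}$ and equal to one on $\{|y|\le\tfrac12 re^{s/2}\}$ converts this into an inequality valid on all of $\m R$, the resulting errors being localized where the Gaussian weight $\rho$ is doubly exponentially small and hence negligible; then a Duhamel/Gronwall estimate of the kind used to prove Lemma \ref{qa} gives $\|w(s)\|_{L^2_\rho}\to0$, and parabolic smoothing yields $\|w(s)\|_{L^\infty(|y|\le1)}\to0$ with an explicit rate. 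Unwinding the scaling, this rate must beat the factor $(T-\tau)^{-1/(p-1)}=e^{s/(p-1)}$, whence $\nu$ stays bounded on a fixed parabolic neighbourhood $B(a,r/2)\times(T-r^2/4,T)$, i.e.\ $(a,T)$ is not a blowup point.

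The one genuine obstacle is quantitative: a crude Gronwall bound only produces the decay rate $\tfrac1{p-1}-K\eta_1^{p-1}$, which does not by itself beat $e^{s/(p-1)}$. One must bootstrap --- feeding the decay of $\|w(s)\|_{L^\infty}$ back into the forcing $|w|^{p-1}|w|$, whose prefactor then itself decays, so that on the slowest (constant) mode the effective rate improves up to $\tfrac1{p-1}$ --- or equivalently use the Giga--Kohn Lyapunov functional to get $w(s)\to0$ along a subsequence and then exclude the equilibria $\pm\kappa$, with $\kappa=(p-1)^{-1/(p-1)}$, using $\eta_1<\kappa$. A shorter alternative bypassing similarity variables is a direct parabolic $\varepsilon$-regularity bootstrap: on the parabolic cylinders shrinking to $(a,T)$ the forcing $K(1+|\nu|^p)$ is small in the relevant scale-invariant norm because $(T-\tau)^{1/(p-1)}|\nu|\le\eta_1$, so standard interior parabolic estimates for the heat operator directly produce a local $L^\infty$ bound on $\nu$ near $(a,T)$. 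In any case, in this paper we simply cite \cite{Giga-1}.
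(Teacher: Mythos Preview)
Your reading is correct: the paper does not prove Proposition~\ref{GK} at all; it is quoted verbatim as a result of Giga and Kohn \cite{Giga-1} and then applied as a black box to conclude that points $x_0$ with $0<|x_0-a|\le \varepsilon_0/2$ are not blowup points. Your opening sentence already says exactly this, so from the point of view of matching the paper's proof there is nothing further to compare.

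The sketch you add is a reasonable outline of the Giga--Kohn argument and is not part of the paper's content. One small comment on that sketch: you correctly flag that the naive spectral gap $-\tfrac{1}{p-1}$ for $\mathcal{L}_0$ only matches, and does not beat, the rescaling factor $e^{s/(p-1)}$, so some bootstrapping or the Lyapunov-functional route is genuinely needed; this is indeed where the work lies in \cite{Giga-1}, and your description of the two possible fixes (iterating the decay of $\|w\|_{L^\infty}$ back into the nonlinear term, or using the energy to rule out the nontrivial equilibria $\pm\kappa$) is accurate.
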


  Indeed, since $0<|x_0-a|\leq \varepsilon_0/2$, it follows from (\ref{e*}) that
\begin{align}\sup\limits_{|x-x_0|\leq\frac{|x_0-a|}{2}}|(T-t)^{1/(p-1)} y(x,t)|\leq \Big|f\Big(\frac{x_0-a}{2\sqrt{(T-t)|\log(T-t)|}}\Big)\Big|+ \frac{C(A)}{\sqrt{|\log(T-t)|}}\rightarrow 0,\nonumber\end{align}
as $t\rightarrow T$.
By (\ref{profile1}), it is easily obtained that  $y$ satisfies  assumption (\ref{a2.5}).
Therefore, applying Proposition \ref{GK}, we see that $x_0$ is not a blowup point.

\medskip

This completes the proof of Proposition \ref{lemma1.3}. $\Box$

\bigskip

\section{Stability of blowup profile for auxiliary system}

In this section, we shall prove Theorem \ref{Pros1.2}, which gives a stability result of the blowup profile with respect to initial data for
auxiliary system (\ref{profile1}).

\subsection{Formulation of the problem}

Let $\widehat{a}\in \omega$. From section 2, we know that there exists $K_0^*\geq 1$ such that for any $K_0\geq K_0^*$, there exists $\widehat{\delta}>0$ with
$(\widehat{a}-4\widehat{\delta},\widehat{a}+4\widehat{\delta})\subset\omega$ such that
for any $K_0\geq K_0^*$ and $\varepsilon_0\in(0,\widehat{\delta}]$, there exists $A^*(K_0,\varepsilon_0)\geq 1$ such that for any
$\widehat{A}\geq A^*(K_0,\varepsilon_0)$ and $\widehat{\eta}_0\in(0,1]$, there exists $\widehat{s}_1^*(K_0,\varepsilon_0,\widehat{A},\widehat{\eta}_0)$
such that for any $\widehat{T}>0$ with $\widehat{s}_0=-\log \widehat{T}\geq \widehat{s}_1^*(K_0,\varepsilon_0,\widehat{A},\widehat{\eta}_0)$,
we could construct an initial data $\widehat{y}_0({K}_0,\varepsilon_0,\widehat{A},\widehat{T})\in C_0^\infty(\omega)$ such that
%$T=t_*(d_0,d_1)$ and
for any $t\in[0,\widehat{T})$, $\widehat{y}(t)\in S^*({K}_0,\varepsilon_0,\widehat{A},1/2,\widehat{\eta}_0,\widehat{T},t)$. In particular,
 $\widehat{q}(s)\in V_{{K}_0,\widehat{A}}(s)$ for all $s\geq-\log \widehat{T}$, and  $|\widehat{y}(x,t)|\leq \widehat{\eta}_0$ for all $x\in \Omega$ with $|x-\widehat{a}|\geq\frac{\varepsilon_0}{2}$.
   Moreover, $\widehat{T}$ is its blowup time and $\widehat{a}$
is its unique blowup point.

\medskip

Fix
$K_0\geq K_0^*$, $\widehat{A}\geq A^*$, $\varepsilon_0\in(0,\widehat{\delta}]$,  and for any $\eta_0\in(0,1]$ but fixed, we fix
$\widehat{\eta}_0\in \Big(0,\frac{\eta_0}{8e^{(\sigma_1+1)^{p-1}}}\Big)$, $\sigma_1= C_1(K_0)\Big[\frac{(p-1)^2\big(\frac{\varepsilon_0}{4}\big)^2}{8p|\log\frac{\varepsilon_0}{4}|}\Big]^{-\frac{1}{p-1}}$ with
$C_1(K_0)\geq 1$ is defined in Lemma \ref{estimate1} and we fix $\widehat{s}_0\geq \widehat{s}_1^*(K_0,\varepsilon_0,\widehat{A},\widehat{\eta}_0)$.

\medskip

Consider an initial data $y_0\in W_0^{1,q}(\Omega)\bigcap W^{2,q}(\Omega)$ such that $\|\widetilde{\varepsilon}_0\|_{W_0^{1,q}(\Omega)\bigcap W^{2,q}(\Omega)}$ is small, where  $q>n+2$ and
$$\widetilde{\varepsilon}_0=y_0-\widehat{y}_0.$$
We denote by $y_{y_0}$ the solution of equation (\ref{profile1}) with initial data $y_0$ and $T(y_0)\leq +\infty$ its maximal time of existence.

\medskip

 For $(T,a)\subset\mathbb{R}^+\times\omega$, we introduce
\begin{align}\label{W11}
{W}_{T,a,y_0}(z,s)=(T-t)^{1/(p-1)} {y}_{y_0}(x,t)=e^{-s/(p-1)}{y}_{y_0}(a+e^{-s/2}z,T-e^{-s})
\end{align}
with
\begin{align}\label{W1jj}x-a=(T-t)^{1/2}z,\ T-t=e^{-s}. \end{align}

Define
\begin{align}\label{www}w_{T,a,y_0}(z,s)=\left\{\begin{array}{ll} W_{T,a,y_0}(z,s)\chi(z,s), \ &z\in (\Omega-a) e^{s/2},\\0, \ &z\in \mathbb{R}\backslash(\Omega-a) e^{s/2},
\end{array}\right.\end{align}
where $\chi$ is defined in (\ref{chi}).

\medskip

Let \begin{align}\label{w11}q_{T,a,y_0}(z,s)=w_{T,a,y_0}(z,s)-\varphi(z,s),\end{align}
where   $\varphi$ is defined in (\ref{w2}).
We then write
%We write
\begin{align}\label{quu}q_{T,a,y_0}=q_e+q_b,\end{align} where
\begin{align}\label{qu1}
q_b=q_{T,a,y_0}\chi_1,\ q_e=q_{T,a,y_0}(1-\chi_1).
\end{align}
Second, we decompose $q_b$ as follows,
\begin{align}\label{qu3}
q_b(z,s)=\sum\limits_{m=0}^2 q_m(s)h_m(z)+q_{-}(z,s),
\end{align}
where $q_m$ is the projection of $q_b$ on $h_m$, $q_{-}(z,s)=P_{-}(q_b)$, and $P_{-}$ is the projection on $\{h_i;\ i\geq 3\}$, the negative subspace of the operator $\mathcal{L}$.

\medskip

Our aim is to show that for  $A>0$, $s_0\geq \widehat{s}_0=-\log \widehat{T}$ large enough and for some $\mu\in(0,1)$, if $\|\widetilde{\varepsilon}_0\|_{W_0^{1,q}(\Omega)\bigcap W^{2,q}(\Omega)}$  is small enough, then
$T(y_0)$ is finite, and $y_{y_0}$ blows up at time $T(y_0)$ only at unique blowup point $a(y_0)$, with
\begin{align}\label{convergence}T(y_0)\rightarrow \widehat{T},\ a(y_0)\rightarrow \widehat{a}, \ \mbox{as}\ \widetilde{\varepsilon}_0=y_0-\widehat{y}_0\rightarrow 0\ \mbox{in} \ W_0^{1,q}(\Omega)\cap W^{2,q}(\Omega),\end{align} $y_{y_0}(t)\in S^*(K_0,\varepsilon_0,A,\mu,\eta_0,T(y_0),t)$, which impies $q_{T(y_0),a(y_0),y_0}(s)\in V_{K_0,A}(s)$
for any $s\geq \sigma_0$ large enough and $t=T(y_0)-e^{-s}$.

\medskip

In order to achieve the aim in (\ref{convergence}), we will study $q_{T,a,y_0}$, where $(T,a)$ is arbitrary in a small neighborhood of $(\widehat{T},\widehat{a})$, hoping to have some hint that some
 $(\overline{T}(y_0),\overline{a}(y_0))$ close to $(\widehat{T},\widehat{a})$ will correspond to the aimed $({T}(y_0),{a}(y_0))$.
%with $(a-2\varepsilon_0,a+2\varepsilon_0)\subset \omega$ will be fixed small enough later.

\medskip

Note that  all the $q_{T,a,y_0}$ satisfy the same equation for all $(z,s)\in \mathbb{R}\times[-\log T,-\log(T-T(y_0))_+)$ (by convention, we note $-\log 0_+=\infty$),
\begin{align}\label{equation L}
\partial_s q_{T,a,y_0}=(\mathcal{L}+V)q_{T,a,y_0}+B_{T,a,y_0}(z,s)+R(z,s)+N_{T,a,y_0}(z,s),
\end{align}
where $\mathcal{L}$ and $V$ are defined in (\ref{LD}), $R$ is defined in (\ref{R}) and
\begin{align}\label{LDD}&B_{T,a,y_0}(z,s)=|(\varphi+q_{T,a,y_0})|^{p-1}(\varphi+q_{T,a,y_0})-\varphi^p-p\varphi^{p-1}q_{T,a,y_0},\\
\label{}&N_{T,a,y_0}(z,s)=H_{T,a,y_0}+\partial_z G_{T,a,y_0}(z,s),\\
\label{H}&H_{T,a,y_0}(z,s)=W_{T,a,y_0}(\partial_s \chi+\partial_z^2 \chi+\frac{1}{2}z\partial_z\chi)+|W_{T,a,y_0}|^{p-1}W_{T,a,y_0}(\chi-\chi^p),\\
\label{GGG}&G_{T,a,y_0}(z,s)=-2\partial_z\chi W_{T,a,y_0},\ \partial_zG_{T,a,y_0}(z,s)=-2\partial_z^2\chi W_{T,a,y_0}-2\partial_z\chi\partial_z W_{T,a,y_0}.
\end{align}

%Indeed, consider $(T-t)^{1/(p-1)} {y}_{y_0}(x,t)$. If $T\leq T(y_0)$, since ${y}_{y_0}(x,t)$ exists on $[0,T)$, and $s=-\log(T-t)$, $t=0\Rightarrow
%s=-\log T$, $t=T\Rightarrow s=+\infty$. Hence, it is defined on $[-\log T,+\infty)$. If $T> T(y_0)$, since ${y}_{y_0}(x,t)$ exists on $[0,T(y_0))$,
 %$t=0\Rightarrow s=-\log T$,  $t=T(y_0)\Rightarrow s=-\log (T-T(y_0))$.

 %\medskip

Introducing
$$\widetilde{\varepsilon}(x,t)=y_{y_0}(x,t)-\widehat{y}(x,t),\ \mbox{for all}\ x\in\Omega,\ \mbox{and}\ 0\leq t<\min(T(y_0),\widehat{T}).$$
Let \begin{align}\label{dddd}\tau=(T-\widehat{T})e^{s_0},\ \alpha=(a-\widehat{a})e^{s_0/2},\ \sigma_0=s_0-\log(1+\tau)\ \mbox{and}\ \widetilde{z}=z\sqrt{(1+\tau)}+\alpha.\end{align}

For any $s_0\in[-\log \widehat{T},-\log(\widehat{T}-T(y_0))_+)$,  $q_{T,a}$ is
defined (\textit{at least}) on $(-\log T,\sigma_0]$ for $T$ near
$\widehat{T}$.

\medskip

For any $s_0\in[-\log \widehat{T},-\log(\widehat{T}-T(y_0))_+)$, we have that if $z\in (\Omega-a)e^{\sigma_0/2}$, then
\begin{align}\label{qt}&q_{T,a,y_0}(z,\sigma_0)=\overline{\psi}(s_0,y_0,T,a,z)\nonumber\\:=&(1+\tau)^{\frac{1}{p-1}}
[\widehat{q}(\widetilde{z},s_0)+\varphi(\widetilde{z},s_0)+e^{-\frac{s_0}{p-1}}
\widetilde{\varepsilon}(e^{-\frac{s_0}{2}}\widetilde{z}+\widehat{a},\widehat{T}-e^{-s_0})\chi(z,\sigma_0)]-\varphi(z,\sigma_0)\nonumber\\
&+e^{-\frac{\sigma_0}{p-1}}\widehat{y}
(ze^{-\frac{\sigma_0}{2}}+a,T-e^{-\sigma_0}))[\chi(z,\sigma_0)-\chi(\widetilde{z},s_0)].
\end{align}

If $z\not\in (\Omega-a)e^{\sigma_0/2}$, then
\begin{align}\label{zzz}q_{T,a,y_0}(z,\sigma_0):=\overline{\psi}(s_0,y_0,T,a,z)=-\varphi(z,\sigma_0).\end{align}

\medskip

In the following, $\overline{\psi}(s_0,y_0,T,a,z)$ appears as initial data for equation (\ref{equation L}) at initial time $s=\sigma_0$. Consider
the mapping $(T,a)\rightarrow \overline{\psi}(s_0,y_0,T,a,z)$.  Similarly as in \cite{LinZaag} and \cite{TZtams19}, we start from  $\overline{\psi}(s_0,y_0,T,a,z)$ at initial time $s=\sigma_0$ including
the reduction to a finite-dimensional problem, the construction procedure would work.

\subsection{Behavior of ``initial data" $\overline{\psi}$ for $(T,a)$ near $(\widehat{T},\widehat{a})$}

In this subsection, we will prove  the following proposition, which gives the properties of initial data $\overline{\psi}(s_0,y_0,T,a,z)$.

\medskip

Recall that we have fixed
$K_0\geq K_0^*$, $\varepsilon_0\in(0,\widehat{\delta}]$, $\widehat{A}\geq A^*$. Let $\eta_0\in(0,1]$.
We fix $\widehat{\eta}_0\in \Big(0,\frac{\eta_0}{8e^{(\sigma_1+1)^{p-1}}}\Big)$, where $\sigma_1= C_1(K_0)\Big[\frac{(p-1)^2\big(\frac{\varepsilon_0}{4}\big)^2}{8p|\log|\frac{\varepsilon_0}{4}||}\Big]^{-\frac{1}{p-1}}$ with
$C_1(K_0)\geq 1$. Let $b=\frac{(p-1)^2}{4p}$.
\begin{proposition}\label{prop3.1}There exists $\overline{C}\geq 1$ such that for any $A\geq \overline{C}\widehat{A}$, there
exists $\overline{s}_0(A)\geq \widehat{s}_1^*(K_0,\varepsilon_0,\widehat{A},\widehat{\eta}_0)$ large enough, such that for any $s_0\geq\overline{s}_0(A)$, there exists $\widehat{\varepsilon}_0(s_0)>0$ small enough such that
for all $y_0\in
\{y\in {W_0^{1,q}(\Omega)\cap W^{2,q}(\Omega)};\ \|y-\widehat{y}_0\|_{W_0^{1,q}(\Omega)\cap W^{2,q}(\Omega)}\leq \widehat{\varepsilon}_0(s_0)\}$, where
$q>n+2$, the following holds:

\medskip

(i) There exists a set
\begin{align}\label{dbar}\overline{D}_{A,s_0,y_0}\subset \Big\{(T,a)\Big|\ |T-\widehat{T}|\leq \frac{2e^{-s_0}(p-1)A}{\kappa s_0^2},\ |a-\widehat{a}|\leq \frac{e^{-s_0/2}(p-1)^2A}{b\kappa s_0}\Big\},\end{align}
whose boundary is a Jordan curve such that the mapping
\begin{align}\label{}(T,a)\rightarrow \sigma_0^2(\overline{\psi}_0,\overline{\psi}_1)(s_0,y_0,T,a),
\ \mbox{where}\ \sigma_0=s_0-\log(1+\tau),\end{align}
is one-to-one from $\overline{D}_{A,s_0,y_0}$ onto $[-A,A]^2$. Moreover, it is of degree -1 on the boundary.

\medskip

(ii) For all $(T,a)\in \overline{D}_{A,s_0,y_0},\ \overline{\psi}(s_0,y_0,T,a)\in V_{{K}_0,A}(\sigma_0)$ with strict inequalities
except for the first two, namely $(\overline{\psi}_0,\overline{\psi}_1)(s_0,y_0,T,a)$, in the sense that
\begin{align}&|\overline{\psi}_m|\leq\frac{A}{\sigma_0^2},\ m=0,\ 1,\nonumber\\
&|\overline{\psi}_2|< C\widehat{A}^2\sigma_0^{-2}\log \sigma_0,\nonumber\\
&\left\|\frac{\overline{\psi}_{-}(z,\sigma_0)}{1+|z|^3}\right\|_{L^{\infty}(\mathbb{R})}<\frac{C\widehat{A}}{\sigma_0^2},\nonumber\\
&\Big\|{\overline{\psi}_{e}(z,\sigma_0)}\Big\|_{L^{\infty}(\mathbb{R})}<\frac{C\widehat{A}^2}{\sigma_0^{\frac{1}{2}}},\nonumber\end{align}
for some positive constant $C>0$.

(iii) For all $(T,a)\in \overline{D}_{A_0,s_0,y_0}$ and for all $x\in\Omega$
 with $|x-a|\geq 3\varepsilon_0/4$, $|{y}_{y_0}(x,T-e^{-\sigma_0})|\leq2\widehat{\eta}_0$.

\end{proposition}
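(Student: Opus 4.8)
The plan is to view $\overline{\psi}(s_0,y_0,T,a,\cdot)$ as a perturbation of $\widehat q(\cdot,s_0)$, which is well controlled because $\widehat q(s_0)\in V_{K_0,\widehat A}(s_0)$ and $|\widehat y(x,t)|\le\widehat\eta_0$ whenever $|x-\widehat a|\ge\varepsilon_0/2$. Using \eqref{qt}, I would split $\overline\psi=\overline\psi^{\mathrm{geo}}+\mathcal E$, where $\overline\psi^{\mathrm{geo}}$ is the value obtained by taking $y_0=\widehat y_0$ (so it is a pure change of self-similar variables applied to $\widehat y$, depending on $(T,a)$ only through $\tau,\alpha,\sigma_0$), and $\mathcal E$ collects all terms containing $\widetilde\varepsilon=y_{y_0}-\widehat y$. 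The argument is then: (a) make $\mathcal E$ negligible by parabolic regularity; (b) analyze $\overline\psi^{\mathrm{geo}}$ by expanding in $(\tau,\alpha)$; (c) run a degree/topological argument for the first two modes.

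For step (a) I would first record the identity $T-e^{-\sigma_0}=\widehat T-e^{-s_0}$, which follows from $\sigma_0=s_0-\log(1+\tau)$ and $\tau=(T-\widehat T)e^{s_0}$; note this is a fixed time strictly before $\widehat T$. Since $q>n+2$, $W_0^{1,q}(\Omega)\cap W^{2,q}(\Omega)\hookrightarrow C^1(\overline\Omega)$, and by local well-posedness and continuous dependence, for $\|\widetilde\varepsilon_0\|_{W_0^{1,q}\cap W^{2,q}}$ small (depending on $s_0$) the solution $y_{y_0}$ exists on $[0,\widehat T-e^{-s_0}]$ with $\|\widetilde\varepsilon(\cdot,\widehat T-e^{-s_0})\|_{C^1(\overline\Omega)}\to 0$ as $\|\widetilde\varepsilon_0\|\to 0$. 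Because $\mathcal E$ is built from $e^{-s_0/(p-1)}\widetilde\varepsilon(e^{-s_0/2}\widetilde z+\widehat a,\widehat T-e^{-s_0})$ and its $z$-derivative with $s_0$ fixed, all of its modes ($m=0,1,2,-,e$) are bounded by a quantity vanishing with $\|\widetilde\varepsilon_0\|$, uniformly in $(T,a)$ in the region of \eqref{dbar}; in particular they are $\le\widehat A\sigma_0^{-2}$ (and as small as wanted for the $-$ and $e$ parts) once $\widehat\varepsilon_0(s_0)$ is small. The same fact yields (iii) at once: $y_{y_0}(x,T-e^{-\sigma_0})=\widehat y(x,\widehat T-e^{-s_0})+\widetilde\varepsilon(x,\widehat T-e^{-s_0})$, and for $|x-a|\ge 3\varepsilon_0/4$ the bound $|a-\widehat a|\le\varepsilon_0/4$ (valid for $s_0\ge\overline s_0(A)$ by \eqref{dbar}) gives $|x-\widehat a|\ge\varepsilon_0/2$, whence $|\widehat y|\le\widehat\eta_0$ and $|\widetilde\varepsilon|\le\widehat\eta_0$, i.e.\ $|y_{y_0}(x,T-e^{-\sigma_0})|\le 2\widehat\eta_0$.

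For step (b), in the region where $\chi(z,\sigma_0)$ and $\chi(\widetilde z,s_0)$ both equal $1$ (which covers $\{|z|\le 2K_0\sqrt{s_0}\}$ for $s_0$ large) one has $\overline\psi^{\mathrm{geo}}(z,\sigma_0)=(1+\tau)^{1/(p-1)}\bigl(\varphi(\widetilde z,s_0)+\widehat q(\widetilde z,s_0)\bigr)-\varphi(z,\sigma_0)$, and outside it $\overline\psi^{\mathrm{geo}}$ is a controlled cut-off error plus a mild rescaling of $\widehat q$. Expanding $(1+\tau)^{1/(p-1)}\varphi(z\sqrt{1+\tau}+\alpha,s_0)-\varphi(z,s_0-\log(1+\tau))$ in $(\tau,\alpha)$ about the origin, using $f(\eta)=(p-1+b|\eta|^2)^{-1/(p-1)}$, the leading contributions land exactly on $h_0$ and $h_1$, giving $\overline\psi^{\mathrm{geo}}_0=\lambda_0\tau+O(\tau^2+\alpha^2+\widehat A^2\sigma_0^{-2}\log\sigma_0)$ and $\overline\psi^{\mathrm{geo}}_1=\lambda_1\alpha/s_0+O(\cdots)$ with explicit nonzero $\lambda_0,\lambda_1$ calibrated so that $\sigma_0^2(\overline\psi_0^{\mathrm{geo}},\overline\psi_1^{\mathrm{geo}})$ sweeps across $[-A,A]^2$ as $(T,a)$ ranges over the rectangle of \eqref{dbar}; the contribution of $\widehat q$ to these two modes is only $O(\widehat A\sigma_0^{-2})$. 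Since $\|\widetilde z-z\|$ and $|\sigma_0-s_0|$ are small on the relevant sets, the change of variables is $C^2$-close to the identity, so $|\overline\psi^{\mathrm{geo}}_2|\le C\widehat A^2\sigma_0^{-2}\log\sigma_0$, $\|\overline\psi^{\mathrm{geo}}_-/(1+|z|^3)\|_{L^\infty}\le C\widehat A\sigma_0^{-2}$ and $\|\overline\psi^{\mathrm{geo}}_e\|_{L^\infty}\le C\widehat A^2\sigma_0^{-1/2}$, the cut-off and outer-region parts being estimated exactly as in the analogous computations of \cite{LinZaag} and \cite{TZtams19}. Adding $\mathcal E$ and choosing $A\ge\overline C\widehat A$ with $\overline C$ large makes $C\widehat A^2+(\text{tiny})<A^2$, $C\widehat A+(\text{tiny})<A/2$, etc., which gives (ii), with strict inequalities for the $m=2,-,e$ modes.

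Finally, for step (c) and claim (i), set $F(T,a)=\sigma_0^2(\overline\psi_0,\overline\psi_1)(s_0,y_0,T,a)$. By (a)--(b), in the variables $(\tau,\alpha)$ the map $F$ is a small $C^1$ perturbation of the linear isomorphism $(\tau,\alpha)\mapsto(\sigma_0^2\lambda_0\tau,\ \sigma_0^2\lambda_1\alpha/s_0)$, uniformly over $\{\|y_0-\widehat y_0\|_{W_0^{1,q}\cap W^{2,q}}\le\widehat\varepsilon_0(s_0)\}$ once $\widehat\varepsilon_0(s_0)$ is small. Defining $\overline D_{A,s_0,y_0}$ to be the connected component of $F^{-1}([-A,A]^2)$ containing $(\widehat T,\widehat a)$ (which lies there since $F(\widehat T,\widehat a)=O(\widehat A)$ is $<A$ in absolute value), one checks, exactly as in the topological argument of Section 2 (Propositions \ref{PP}--\ref{S94}) and in \cite{TZtams19}, that $\overline D_{A,s_0,y_0}$ is a Jordan domain contained in the rectangle of \eqref{dbar}, that $F$ is one-to-one from it onto $[-A,A]^2$, and that the degree of $F|_{\partial\overline D}$ equals $\operatorname{sgn}(\lambda_0\lambda_1)=-1$ (the sign, opposite to that in Lemma \ref{initial data1}, comes from the orientation introduced by $\sigma_0=s_0-\log(1+\tau)$ and the monotonicity of $\varphi$ in $s$). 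The main obstacle is precisely this last step together with the calibration of $\lambda_0,\lambda_1$: one must extract the exact leading-order affine dependence of $(\overline\psi_0,\overline\psi_1)$ on $(\tau,\alpha)$ and verify that every other contribution is genuinely of lower order, which requires the careful (by now standard) expansion of the self-similar change of variables about the profile.
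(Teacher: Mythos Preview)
Your proposal is essentially correct and follows the same route as the paper: the paper splits $\overline\psi=\overline\psi^1+\overline\psi^2+\overline\psi^3+\overline\psi^4+\overline\psi^5$, where $\overline\psi^1$ is exactly your $\mathcal E$ and $\overline\psi^2+\overline\psi^3+\overline\psi^4+\overline\psi^5$ is your $\overline\psi^{\mathrm{geo}}$; it then proves a detailed expansion lemma (Lemma~\ref{lem 2.2}) giving the leading affine behavior $\sigma_0^2\overline\psi_0\approx s_0^2\widehat q_0(s_0)+\frac{\kappa}{p-1}\widetilde\tau$, $\sigma_0^2\overline\psi_1\approx s_0^2\widehat q_1(s_0)-\frac{2b\kappa}{(p-1)^2}\widetilde\alpha$ in the rescaled variables $(\widetilde\tau,\widetilde\alpha)=(s_0^2\tau,s_0\alpha)$, and concludes by a Jacobian computation showing the map is a $C^1$-diffeomorphism of degree $-1$ on a rectangle covering $[-A,A]^2$.

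Two minor remarks. First, your justification of the sign is not the actual mechanism: the degree $-1$ comes simply from $\partial_\tau\overline\psi_0\sim\kappa/(p-1)>0$ while $\partial_\alpha\overline\psi_1\sim 2\varphi_2(s_0)=-\kappa/(2ps_0)<0$, i.e.\ from the sign of the second Hermite coefficient of the profile, not from any orientation effect of $\sigma_0=s_0-\log(1+\tau)$. Second, the estimate $\|\overline\psi^{\mathrm{geo}}_-/(1+|z|^3)\|_{L^\infty}\le C\widehat A\sigma_0^{-2}$ does not follow merely from ``the change of variables is $C^2$-close to the identity'': since $\varphi$ is $O(1)$, a naive $C^2$-closeness argument only gives $O(|\tau|+|\alpha|)$ for the profile contribution, which after inserting $|\tau|\lesssim A/s_0^2$, $|\alpha|\lesssim A/s_0$ yields $O(A/s_0)$, far too weak. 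The paper handles this by a second-order Taylor expansion of $(1+\tau)^{1/(p-1)}\varphi(\widetilde z,s_0)-\varphi(z,\sigma_0)$ in $(\tau,\alpha)$ and an explicit computation showing that the first-order pieces project onto $h_0,h_1,h_2$ up to $O(s_0^{-1})$ remainders; this is the genuinely delicate step you defer to \cite{LinZaag,TZtams19}.
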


Define
\begin{align}\label{gL}\overline{g}(z,\sigma_0)=(1+\tau)^{\frac{1}{p-1}}g(\widetilde{z},s_0),\end{align}
where $\widetilde{z}=z\sqrt{1+\tau}+\alpha$, $\sigma_0=s_0-\log(1+\tau)$, $g(s_0)\in L^{\infty}(\mathbb{R})$.
In order to prove Proposition \ref{prop3.1}, we need the following lemma, which is an adaptation of Lemma 6.4 in \cite{TZtams19}.
\begin{lemma}\label{lemma g}
(Expansions of $\overline{g}(z,\sigma_0))$ There exists $\widetilde{s}_{0}>0$ large enough such that for any $s_0\geq \widetilde{s}_{0}$,
and $(\tau,\alpha)\in \mathbb{R}^2$ satisfying
\begin{align}\label{taualpha}|\tau|\leq \frac{1}{16},\ |\alpha|\leq \frac{1}{4}\end{align}
such that if $g(s_0)\in L^{\infty}(\mathbb{R})$ and $\overline{g}$ is given by (\ref{gL}), then

\medskip

(i) $(\frac{\sigma_0}{s_0})^2\overline{g}_0(\sigma_0)=g_0(s_0)(1+\frac{\tau}{p-1})+O(\alpha g_1(s_0))$
+$O((|\tau|+\alpha^2)g_2(s_0))+O((e^{-s_0}+\tau^2+\frac{|\tau|}{s_0}+|\tau\alpha|+|\alpha|^3)\|g(s_0)\|_{L^{\infty}}$);

(ii) $(\frac{\sigma_0}{s_0})^2\overline{g}_1(\sigma_0)=g_1(s_0)(1+\frac{(p+1)\tau}{2(p-1)})+2\alpha g_2(s_0)$
+$O((|\tau|+\alpha^2)g_3(s_0))+O((e^{-s_0}+\tau^2+\frac{|\tau|}{s_0}+|\tau\alpha|+|\alpha|^3)\|g(s_0)\|_{L^{\infty}}$);

(iii) $\overline{g}_2(\sigma_0)=g_2(s_0)+O(\alpha g_3(s_0))$
+$O(\alpha^2g_4(s_0))+O((e^{-s_0}+
|\tau|+|\alpha|^3)\|g(s_0)\|_{L^{\infty}}$);

(iv) $\sigma_0^{-2}\partial_\tau[\sigma_0^2\overline{g}_0](\sigma_0)=\frac{g_0(s_0)}{p-1}+O(g_2(s_0))+O((s_0^{-1}+
|\tau|+|\alpha|)\|g(s_0)\|_{L^{\infty}}$);

(v) $\partial_\tau[(\frac{\sigma_0}{s_0})^2\overline{g}_1](\sigma_0)=O(g_1(s_0))+O(g_3(s_0))+O((s_0^{-1}+
|\tau|+|\alpha|)\|g(s_0)\|_{L^{\infty}}$);

(vi) $\partial_\alpha \overline{g}_0(\sigma_0)=O(g_1(s_0))+O(\alpha g_2(s_0))+O((e^{-s_0}+
|\tau|+|\alpha|^2)\|g(s_0)\|_{L^{\infty}}$);

(vii) $\partial_\alpha \overline{g}_1(\sigma_0)=2g_2(s_0)+O(\alpha g_3(s_0))+O((e^{-s_0}+
|\tau|+|\alpha|^2)\|g(s_0)\|_{L^{\infty}}$);

(viii) $\|\frac{\overline{g}_{-}(z,\sigma_0)}{1+|z|^3}\|_{L^{\infty}(\mathbb{R})}\leq C\|\frac{{g}_{-}(\widetilde{z},s_0)}{1+|\widetilde{z}|^3}\|_{L^{\infty}(\mathbb{R})}
+C|\alpha|(|g_1(s_0)|+|g_2(s_0)|)+O((s_0^{-2}+
|\tau|+|\alpha|^3)\|g(s_0)\|_{L^{\infty}}$);

(ix) $\|\overline{g}_{e}(z,\sigma_0)\|_{L^{\infty}(\mathbb{R})}\leq \|{g}_{e}(\widetilde{z},s_0)\|_{L^{\infty}(\mathbb{R})}
+C(|\tau|+|\alpha|)\|g(s_0)\|_{L^{\infty}(\mathbb{R})}.$
\end{lemma}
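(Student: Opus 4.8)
The plan is to treat Lemma \ref{lemma g} as a purely algebraic statement about how the Hermite-type coordinates of a bounded function transform under the affine change of variable $\widetilde z=z\sqrt{1+\tau}+\alpha$, the prefactor $(1+\tau)^{1/(p-1)}$ and the rescaling $\sigma_0=s_0-\log(1+\tau)$, following the scheme of Lemma 6.4 in \cite{TZtams19}; the passage from $\m R^n$ to $\Omega$ enters only through the cut-off $\chi_1$ and is $L^\infty$-harmless. First I would start from the projection formulas in \eqref{r0r1r2}, insert $\overline g(z,\sigma_0)=(1+\tau)^{1/(p-1)}g(\widetilde z,s_0)$ and change variables $z\mapsto\widetilde z$ (so $z=(\widetilde z-\alpha)/\sqrt{1+\tau}$, $dz=(1+\tau)^{-1/2}\,d\widetilde z$), which rewrites $\overline g_m(\sigma_0)$ as $\int_{\m R}\Phi_m(\widetilde z;\tau,\alpha)\,g(\widetilde z,s_0)\,\chi_1\big((\widetilde z-\alpha)/\sqrt{1+\tau},\sigma_0\big)\rho(\widetilde z)\,d\widetilde z$ with kernel $\Phi_m(\widetilde z;\tau,\alpha)=(1+\tau)^{\frac1{p-1}-\frac12}\,k_m\big((\widetilde z-\alpha)/\sqrt{1+\tau}\big)\,e^{E(\widetilde z;\tau,\alpha)}$, where $E=\frac{\widetilde z^2}{4}-\frac{(\widetilde z-\alpha)^2}{4(1+\tau)}=\frac{\tau\widetilde z^2}{4(1+\tau)}+\frac{\alpha\widetilde z}{2(1+\tau)}-\frac{\alpha^2}{4(1+\tau)}$, and analogously for the data defining $\overline g_-$ and $\overline g_e$.

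The next step is to dispose of the cut-off mismatch: under \eqref{taualpha}, $\chi_1((\widetilde z-\alpha)/\sqrt{1+\tau},\sigma_0)$ and $\chi_1(\widetilde z,s_0)$ coincide outside a region contained in $\{|\widetilde z|\gtrsim K_0\sqrt{s_0}\}$, where $\rho(\widetilde z)e^{E}\le Ce^{-s_0}$ once $K_0$ is large enough (note $\frac{\tau}{4(1+\tau)}<\frac14$ keeps $\rho e^{E}$ Gaussian-decaying); since $\Phi_m$ times any fixed power of $\widetilde z$ then lies in $L^1(\rho)$ with a bounded norm, replacing one cut-off by the other costs only $O(e^{-s_0}\|g(s_0)\|_{L^\infty})$, which is absorbed in every stated error. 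After this replacement, items (i)--(iii) follow by Taylor-expanding $\Phi_m$ in $(\tau,\alpha)$ at $(0,0)$ to first order with integral remainder. The zeroth-order term is $k_m(\widetilde z)$, which integrates against $g(\widetilde z,s_0)\chi_1\rho$ to exactly $g_m(s_0)$; the first-order pieces are the explicit polynomials $\partial_\tau\Phi_m|_0=\big(\tfrac1{p-1}-\tfrac12+\tfrac{\widetilde z^2}{4}\big)k_m-\tfrac{\widetilde z}{2}k_m'$ and $\partial_\alpha\Phi_m|_0=\tfrac{\widetilde z}{2}k_m-k_m'$, which by $h_m'=mh_{m-1}$ and $zh_m=h_{m+1}+2mh_{m-1}$ decompose on $\{h_n\}$ into combinations of $h_{m-1},\dots,h_{m+2}$ only; reading off these coefficients (and multiplying by $\|h_n\|_{L^2_\rho}^2$) produces the constants $\frac1{p-1}$, $\frac{p+1}{2(p-1)}$, $2,\dots$ appearing in (i)--(iii), while the remainder, a polynomial in $\widetilde z$ of bounded degree with $O((|\tau|+|\alpha|)^2)$ coefficients, contributes $O((|\tau|+|\alpha|)^2\|g(s_0)\|_{L^\infty})$ because $\int|\widetilde z|^N\chi_1\rho\,d\widetilde z$ is bounded uniformly in $s_0$. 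Finally I would multiply by $(\sigma_0/s_0)^2=1+O(|\tau|/s_0)$ (using $\sigma_0=s_0-\log(1+\tau)$), which yields the $|\tau|/s_0$- and $s_0^{-1}$-type errors, and bound any undesired term $\tau g_j$ or $\alpha g_j$ by $O((|\tau|+|\alpha|)\|g(s_0)\|_{L^\infty})$ via $|g_j(s_0)|\le C\|g(s_0)\|_{L^\infty}$. Items (iv)--(vii) come from differentiating the integral representation in $\tau$ (resp.\ $\alpha$) under the integral sign — licit since every integrand is dominated by $C(1+|\widetilde z|^N)\rho(\widetilde z)\|g(s_0)\|_{L^\infty}$ — and repeating the same bookkeeping.

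For (viii) the plan is to use that $P_-$ is the $L^2_\rho$-orthogonal projection onto $\mathrm{span}\{h_i:i\ge3\}$ together with the standard regularizing bound $\|(1+|z|^3)^{-1}P_-(r)\|_{L^\infty}\le C\|(1+|z|^3)^{-1}r\|_{L^\infty}$: writing $\chi_1\overline g=\overline g_0h_0+\overline g_1h_1+\overline g_2h_2+\overline g_-$ and carrying out the change of variable, the ``$g_-$-part'' of the right-hand side transforms into $C\|(1+|\widetilde z|^3)^{-1}g_-(\widetilde z,s_0)\|_{L^\infty}$ up to factors $1+O(|\tau|+|\alpha|)$ coming from $(1+\tau)^{1/(p-1)-1/2}$ and $(1+|z|^3)/(1+|\widetilde z|^3)$, while the affine shift leaks the low modes of $g$ into the $h_0,h_1,h_2$ directions with $\alpha$- and $\tau$-prefactors; subtracting $\overline g_0,\overline g_1,\overline g_2$ then leaves exactly the $C|\alpha|(|g_1(s_0)|+|g_2(s_0)|)$ contribution plus an $O((s_0^{-2}+|\tau|+|\alpha|^3)\|g(s_0)\|_{L^\infty})$ remainder, the $s_0^{-2}$ stemming from the cut-off mismatch reweighted by $1+|z|^3$. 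Item (ix) is then an immediate $L^\infty$ estimate from $\overline g_e(z,\sigma_0)=(1-\chi_1(z,\sigma_0))(1+\tau)^{1/(p-1)}g(\widetilde z,s_0)$, using the support inclusion for $1-\chi_1$ and $|\tau|\le\frac1{16}$, $|\alpha|\le\frac14$. I expect the main obstacle to be not any individual step but the uniform, accurate accounting of the first-order Taylor coefficients and of the precise sizes of the error terms — in particular telling $\tau^2$, $|\tau|/s_0$, $|\tau\alpha|$ and $|\alpha|^3$ apart — and, in (viii), correctly pushing the nonlocal projection $P_-$ through the affine change of variable so as to isolate exactly the $|\alpha|(|g_1|+|g_2|)$ term; everything else is routine.
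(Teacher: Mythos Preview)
The paper does not give its own proof of this lemma: it simply states it as ``an adaptation of Lemma 6.4 in \cite{TZtams19}'' and immediately uses it to derive Lemma \ref{lem 2.2}. Your proposal follows precisely that scheme --- change variables $z\mapsto\widetilde z$ in the projection formulas, Taylor-expand the resulting kernel in $(\tau,\alpha)$, use the Hermite recursions to identify the leading coefficients, and control the cut-off mismatch via the Gaussian weight --- so it is entirely consistent with what the paper invokes, and there is nothing further to compare.
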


By Lemma \ref{lemma g}, we can obtain the following lemma.

\begin{lemma}\label{lem 2.2} (Expansions of modes for $(T,a)$ close to $(\widehat{T},\widehat{a})$) For $s_0>0$ large enough, there exists $C_0(s_0)>0$ such that
for any
\begin{align}\label{lem 6.2}\|\widetilde{\varepsilon}_0\|_{W_0^{1,q}(\Omega)\cap W^{2,q}(\Omega)}\leq \frac{1}{C_0}, \ |\tau|\leq \frac{1}{16},\ |\alpha|\leq \frac{1}{4},
\end{align} where $q>n+2$, we have the following inequalities:
\begin{align}(i)\
&\Big|\frac{\sigma_0^2}{s_0^2}\overline{\psi}_0(s_0,y_0,T,a)-\widehat{q}_0(s_0)
-\frac{\kappa \tau}{p-1}\Big|\nonumber\\ \leq& C\|\widetilde{\varepsilon}_0\|_{W_0^{1,q}(\Omega)\cap W^{2,q}(\Omega)}+C\frac{\widehat{A}}{s_0^2}|{\tau}|
+C\frac{\widehat{A}}{s_0^2}|\alpha|+C\frac{\widehat{A}^2}{s_0^{\frac{1}{2}}}e^{-s_0}
+C\frac{\widehat{A}^2}{s_0^{\frac{1}{2}}}(|{\tau}|+|\alpha|^3)
\nonumber\\&+C\frac{\widehat{A}^2}{s_0^{2}}\log s_0(\tau
+|{\alpha}|^2)+Ce^{-s_0}+C{\tau}^2+C\frac{|\tau|}{s_0}+C\frac{{\alpha}^2}{s_0}
+C|\alpha|e^{-s_0/2};
\nonumber\end{align} \begin{align}(ii)\ &
\Big|\frac{\sigma_0^2}{s_0^2}\overline{\psi}_1(s_0,y_0,T,a)-\widehat{q}_1(s_0)
+\frac{2b\kappa}{(p-1)^2}\frac{\alpha }{s_0}\Big|\nonumber\\ \leq& C\|\widetilde{\varepsilon}_0\|_{W_0^{1,q}(\Omega)\cap W^{2,q}(\Omega)}+C\frac{\widehat{A}^2}{s_0^{\frac{1}{2}}}e^{-s_0}
+C\frac{\widehat{A}}{s_0^2}|{\tau}|+C\frac{\widehat{A}^2}{s_0^2}\log s_0|\alpha|\nonumber\\&+C\frac{\widehat{A}^2}{s_0^{\frac{1}{2}}}\Big(|{\tau}|^2+|\tau\alpha|+\frac{|\tau|}{s_0}+|\alpha|^3\Big)+
\Big(\frac{C\widehat{A}}{s_0^2}+e^{-Cs_0}\Big)(|\tau|+|\alpha|^2)\nonumber\\&+C\Big(\frac{|\alpha|}{s_0^2}+e^{-s_0}+|{\tau}|^2+|\tau\alpha|
+\frac{|\tau|}{s_0}+|\alpha|^3\Big)
+C|{\alpha}|e^{-s_0/2};
\nonumber\\ (iii)\ &|\overline{\psi}_2(s_0,y_0,T,a)|\leq C\|\widetilde{\varepsilon}_0\|_{W_0^{1,q}(\Omega)\cap W^{2,q}(\Omega)}+
\frac{C\widehat{A}}{s_0^{2}}|\alpha|+\frac{C\widehat{A}}{s_0^{2}}|\alpha|^2+\nonumber\\&+\frac{C\widehat{A}^2}{{s_0^\frac{1}{2}}}(|\tau|+|\alpha|^3)+
\frac{C\widehat{A}^2}{{s_0^\frac{1}{2}}}e^{-s_0}
+C(|\tau|+|\alpha|^3+e^{-s_0})
+\frac{C}{s_0^2}
\nonumber\\&+Ce^{-Cs_0}|\alpha|+Ce^{-Cs_0}|\alpha|^2+C\frac{|\tau|}{s_0^2}+\widehat{A}^2s_0^{-2}\log s_0+C|\alpha|e^{-s_0/2};\nonumber\\
 (iv)\ &\Big|\sigma_0^{-2}\partial_\tau[\sigma_0^2\overline{\psi}_0](s_0,y_0,T,a)-\frac{\kappa}{p-1}\Big|\nonumber\\ \leq&C\|\widetilde{\varepsilon}_0\|_{W_0^{1,q}(\Omega)\cap W^{2,q}(\Omega)}+
\frac{C\widehat{A}}{s_0^{2}}+\frac{C\widehat{A}^2}{s_0^{\frac{1}{2}}}(|\tau|+|\alpha|)+C(|\tau|+|\alpha|)+\frac{C}{s_0}
+\frac{C\widehat{A}^2}{s_0^{3/2}}\nonumber\\&+Ce^{-Cs_0}+Ce^{-Cs_0}|\alpha|+C\frac{\widehat{A}^2}{s_0^{2}}\log s_0;\nonumber\\
 (\mbox{v})\ &\Big|\partial_\tau\Big[\frac{\sigma_0^2}{s_0^2}\overline{\psi}_1\Big](s_0,y_0,T,a)\Big|\nonumber\\ \leq &C\|\widetilde{\varepsilon}_0\|_{W_0^{1,q}(\Omega)\cap W^{2,q}(\Omega)}+
\frac{C\widehat{A}}{s_0^{2}}+\frac{C\widehat{A}^2}{s_0^{\frac{1}{2}}}(s_0^{-1}+|\tau|+|\alpha|)\nonumber\\&+C(\frac{1}{s_0}+|\tau|+|\alpha|)
+Ce^{-Cs_0}+Ce^{-Cs_0}|\alpha|;\nonumber\\
(\mbox{vi})\ &|\partial_\alpha \overline{\psi}_0(s_0,y_0,T,a)|\leq C\|\widetilde{\varepsilon}_0\|_{W_0^{1,q}(\Omega)\cap W^{2,q}(\Omega)}+
\frac{C\widehat{A}}{s_0^{2}}+\frac{C\widehat{A}^2}{s_0^2}\log s_0|\alpha|+\nonumber\\&+\frac{C\widehat{A}^2}{{s_0^\frac{1}{2}}}(|\tau|+|\alpha|^2)+
\frac{C\widehat{A}^2}{{s_0^\frac{1}{2}}}e^{-s_0}
+C\Big(|\tau|+|\alpha|^2+e^{-s_0}+\frac{|\alpha|}{s_0}\Big)
\nonumber\\&+Ce^{-Cs_0}|\alpha|+Ce^{-Cs_0};\nonumber\\
(\mbox{vii})\ &|\partial_\alpha \overline{\psi}_1(s_0,y_0,T,a)+\frac{2b\kappa}{(p-1)^2s_0}|\nonumber
\\\leq&C\|\widetilde{\varepsilon}_0\|_{W_0^{1,q}(\Omega)\cap W^{2,q}(\Omega)}+
\frac{C\widehat{A}}{s_0^{2}}|\alpha|+\frac{C\widehat{A}^2}{s_0^2}\log s_0+\nonumber\\&+\frac{C\widehat{A}^2}{s_0^{\frac{1}{2}}}(|\tau|+|\alpha|^2)+
\frac{C\widehat{A}^2}{s_0^{\frac{1}{2}}}e^{-s_0}+\frac{C}{s_0^2}
+C(|\tau|+|\alpha|^2)
\nonumber\\&+Ce^{-Cs_0}|\alpha|+Ce^{-Cs_0};\nonumber\end{align}
\begin{align}
(viii)\ &\left\|\frac{\overline{\psi}_{-}(s_0,y_0,T,a)}{1+|z|^3}\right\|_{L^\infty(\Omega)} \nonumber\\
\leq
& C\|\widetilde{\varepsilon}_0\|_{W_0^{1,q}(\Omega)\cap W^{2,q}(\Omega)}+\frac{C\widehat{A}}{s_0^{2}}+\frac{C\widehat{A}}{s_0^{2}}|\alpha|+
\frac{C\widehat{A}^2}{s_0^{\frac{1}{2}}}(|\tau|+|\alpha|^3)
+C\frac{\widehat{A}^2}{s_0^{2}}\log s_0|\alpha|+\frac{C\widehat{A}^2}{s_0^{5/2}}\nonumber\\&+Ce^{-Cs_0}|\alpha|
+\frac{C}{s_0}|\tau|+C\frac{|\alpha|}{s_0^2}+C\frac{|\alpha|^2}{s_0^2}+C|\tau|^2+Ce^{-s_0};\nonumber\\
(ix)\ &\Big\|\overline{\psi}_{e}(s_0,y_0,T,a)\Big\|_{L^{\infty}(\mathbb{R})} \nonumber\\
\leq& C\|\widetilde{\varepsilon}_0\|_{W_0^{1,q}(\Omega)\cap W^{2,q}(\Omega)}
+\frac{C\widehat{A}^2}{s_0^{\frac{1}{2}}}(1+|\tau|+|\alpha|)
+C\Big(|\tau|+\frac{|\alpha|}{s_0^{\frac{1}{2}}}\Big)+\frac{C}{s_0}+Ce^{-Cs_0}|\alpha|.\nonumber
\end{align}
\end{lemma}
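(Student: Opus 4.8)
The plan is to prove Lemma~\ref{lem 2.2} by inserting the explicit formula (\ref{qt})--(\ref{zzz}) for the ``initial data'' $\overline\psi(s_0,y_0,T,a,z)$ and splitting it into four pieces whose modes can be estimated separately. Using that the similarity variables of $y_{y_0}$ around $(a,T)$ at time $\sigma_0=s_0-\log(1+\tau)$ coincide, after the affine change of variable $\widetilde z=\sqrt{1+\tau}\,z+\alpha$, with those of $y_{y_0}=\widehat y+\widetilde\varepsilon$ around $(\widehat a,\widehat T)$ at time $s_0$, and that $\widehat w=\widehat q+\varphi$, one obtains
\[
\overline\psi(z,\sigma_0)=\overline{\widehat q}(z,\sigma_0)+\big(\overline\varphi(z,\sigma_0)-\varphi(z,\sigma_0)\big)+\mathcal C(z,\sigma_0)+\mathcal E(z,\sigma_0),
\]
where $\overline{\widehat q}$ and $\overline\varphi$ denote the rescalings (\ref{gL}) of $\widehat q(s_0)$ and $\varphi(s_0)$, $\mathcal C(z,\sigma_0)=(1+\tau)^{1/(p-1)}\widehat W(\widetilde z,s_0)\big[\chi(z,\sigma_0)-\chi(\widetilde z,s_0)\big]$ is a cut-off mismatch (with $\widehat W$ the similarity variables of $\widehat y$ around $(\widehat a,\widehat T)$), and $\mathcal E(z,\sigma_0)=(1+\tau)^{1/(p-1)}e^{-s_0/(p-1)}\widetilde\varepsilon\big(\widehat a+e^{-s_0/2}\widetilde z,\widehat T-e^{-s_0}\big)\chi(z,\sigma_0)$ carries the perturbation. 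It then suffices to project each piece onto the modes and regroup.

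For $\overline{\widehat q}$, apply Lemma~\ref{lemma g} with $g=\widehat q(s_0)$: parts (i)--(ix) of that lemma express $\overline{\widehat q}_m(\sigma_0)$, $\overline{\widehat q}_-$, $\overline{\widehat q}_e$ and their $\tau,\alpha$-derivatives in terms of the modes of $\widehat q(s_0)$. Substituting the bounds coming from $\widehat q(s)\in V_{K_0,\widehat A}(s)$, namely $|\widehat q_m(s_0)|\le\widehat A s_0^{-2}$ for $m=0,1$, $|\widehat q_2(s_0)|\le\widehat A^2 s_0^{-2}\log s_0$, $\|\widehat q_-(s_0)/(1+|z|^3)\|_{L^\infty}\le\widehat A s_0^{-2}$, $\|\widehat q_e(s_0)\|_{L^\infty}\le\widehat A^2 s_0^{-1/2}$, together with $\|\widehat q(s_0)\|_{L^\infty}\le C\widehat A^2 s_0^{-1/2}$ from Lemma~\ref{W^*}(ii), produces exactly the $\widehat A$- and $\widehat A^2$-weighted error terms in the right-hand sides of (i)--(ix) (the $\widehat A^2 s_0^{-2}\log s_0\,(|\tau|+|\alpha|^2)$ contributions arising from the $\widehat q_2$ terms of Lemma~\ref{lemma g}). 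The piece $\overline\varphi-\varphi$ is handled by a direct expansion of $\varphi(\xi,s)=f(\xi/\sqrt s)+\frac{\kappa}{2ps}$ in the small quantities $\tau,\alpha,s_0^{-1}$ and $\xi/\sqrt{s_0}$ on the support of $\chi_1$, using $\varphi_0(s)=\kappa+O(s^{-2})$ (the $s^{-1}$ term cancels because $b=(p-1)^2/(4p)$), $\varphi_1\equiv0$, and $\varphi_2(s)=-\frac{b\kappa}{(p-1)^2 s}+O(s^{-2})$. Since $(1+\tau)^{1/(p-1)}=1+\frac{\tau}{p-1}+O(\tau^2)$, the zeroth mode of $\overline\varphi-\varphi$ equals $\frac{\kappa\tau}{p-1}$ up to lower order, and since Lemma~\ref{lemma g}(ii) applied to $g=\varphi$ gives a first mode $2\alpha\varphi_2(s_0)+\cdots$, the first mode of $\overline\varphi-\varphi$ equals $-\frac{2b\kappa}{(p-1)^2}\frac{\alpha}{s_0}$ up to lower order; these are precisely the principal terms of (i) and (ii), and the derivatives in (iv)--(vii) follow by differentiating the same expansions.

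The cut-off mismatch $\mathcal C$ is supported in $\{|z|\gtrsim\varepsilon_0 e^{\sigma_0/2}\}$, hence vanishes on $\mathrm{supp}\,\chi_1$ and contributes nothing to $\overline\psi_0,\overline\psi_1,\overline\psi_2,\overline\psi_-$, affecting only $\overline\psi_e$; moreover the arguments of $\chi_0$ in $\chi(z,\sigma_0)$ and $\chi(\widetilde z,s_0)$ differ by exactly $\alpha e^{-s_0/2}/\varepsilon_0$, so $|\chi(z,\sigma_0)-\chi(\widetilde z,s_0)|\le C\varepsilon_0^{-1}|\alpha|e^{-s_0/2}$, and with $\|\widehat W(s_0)\|_{L^\infty}$ bounded by Lemma~\ref{W^*}(iii) this yields $\|\mathcal C\|_{L^\infty}\le C|\alpha|e^{-s_0/2}$, accounting for the exponentially small $|\alpha|e^{-s_0/2}$ terms. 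For $\mathcal E$, one uses that $\widetilde\varepsilon=y_{y_0}-\widehat y$ is the difference of two solutions of (\ref{profile1}) with initial data $y_0$ and $\widehat y_0$, both considered on the \emph{fixed} compact interval $[0,\widehat T-e^{-s_0}]$ on which $\widehat y$ is smooth and bounded; standard parabolic well-posedness and a Gronwall argument then give $\|\widetilde\varepsilon(\cdot,\widehat T-e^{-s_0})\|_{L^\infty(\Omega)}\le C(s_0)\|\widetilde\varepsilon_0\|_{W_0^{1,q}(\Omega)\cap W^{2,q}(\Omega)}$ once $\|\widetilde\varepsilon_0\|$ is smaller than $1/C_0(s_0)$, and since the mode projections are bounded on the relevant weighted spaces, every mode of $\mathcal E$ is bounded by $C\|\widetilde\varepsilon_0\|_{W_0^{1,q}(\Omega)\cap W^{2,q}(\Omega)}$, which supplies the $C\|\widetilde\varepsilon_0\|$ terms. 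Summing the four contributions mode by mode and absorbing the small quantities into one another gives (i)--(ix).

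The main obstacle is the bookkeeping in the second paragraph: one must track precisely which powers of $s_0$, $\tau$, $\alpha$ and $\widehat A$ multiply each remainder so that, after all the cancellations, only the principal terms $\frac{\kappa\tau}{p-1}$ and $-\frac{2b\kappa}{(p-1)^2}\frac{\alpha}{s_0}$ survive and everything else is genuinely lower order; this is the only place where the explicit form of the profile $f$ and the precise spectral structure of $\mathcal L$ enter, and it is routine but delicate. The remaining ingredients — the splitting, the vanishing of $\mathcal C$ on $\mathrm{supp}\,\chi_1$, and the Gronwall bound on $\mathcal E$ — are soft.
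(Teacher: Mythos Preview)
Your proposal is correct and follows essentially the same decomposition as the paper: the paper splits $\overline\psi$ into five pieces $\overline\psi^1,\dots,\overline\psi^5$ which coincide with your $\mathcal E$, $\overline{\widehat q}$, $\overline\varphi$, $-\varphi$, $\mathcal C$ respectively, applies Lemma~\ref{lemma g} to $\overline{\widehat q}$, expands $\overline\varphi-\varphi$ directly (using a second-order Taylor expansion in $(\tau,\alpha)$ for the negative part, which is the delicate bookkeeping you flag), and bounds $\mathcal E$ via parabolic continuous dependence. Your observation that $\mathcal C\chi_1\equiv 0$ for $s_0$ large is slightly sharper than the paper, which merely bounds the corresponding projections by $C|\alpha|e^{-s_0/2}$; either suffices.
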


\begin{proof} From (\ref{qt}) and (\ref{zzz}), we write
$$\overline{\psi}(s_0,y_0,T,a,z)=\overline{\psi}^1(z,\sigma_0)+\overline{\psi}^2(z,\sigma_0)+\overline{\psi}^3(z,\sigma_0)+\overline{\psi}^4(z,\sigma_0)
+\overline{\psi}^5(z,\sigma_0)$$
for $z\in (\Omega-a)e^{\sigma_0/2}$, where
\begin{align}&\overline{\psi}^1(z,\sigma_0)=(1+\tau)^{\frac{1}{p-1}}e^{-\frac{s_0}{p-1}}
\widetilde{\varepsilon}(e^{-\frac{s_0}{2}}\widetilde{z}+\widehat{a},\widehat{T}-e^{-s_0})\chi(z,\sigma_0);\label{psibar1} \\
&\overline{\psi}^2(z,\sigma_0)=(1+\tau)^{\frac{1}{p-1}}\widehat{q}(\widetilde{z},s_0);\label{psibar2} \\
&\overline{\psi}^3(z,\sigma_0)=(1+\tau)^{\frac{1}{p-1}}\varphi(\widetilde{z},s_0);
\label{psibar3} \\
&\overline{\psi}^4(z,\sigma_0)=-\varphi(z,\sigma_0);
\label{psibar4} \\
&\overline{\psi}^5(z,\sigma_0)=e^{-\frac{\sigma_0}{p-1}}\widehat{y}
(ze^{-\frac{\sigma_0}{2}}+a,T-e^{-\sigma_0}))[\chi(z,\sigma_0)-\chi(\widetilde{z},s_0)],\label{psibar5}
\end{align}
and for $z\not\in (\Omega-a)e^{\sigma_0/2}$,
\begin{align}\overline{\psi}^1=\overline{\psi}^2=\overline{\psi}^3=\overline{\psi}^5=0,\
\overline{\psi}^4(z,\sigma_0)=-\varphi(z,\sigma_0)\label{psibar7}. \end{align}

{\bf Step 1. Expansion of $\overline{\psi}^1$}

\medskip

When $t=\widehat{T}-e^{-s_0}$,  $\widehat{y}$ is meaningful in $[0,\widehat{T}-e^{-s_0}]$. Thus,
${y}_{y_0}$is meaningful in $[0,\widehat{T}-e^{-s_0}]$ if $y_0$ is close enough to $\widehat{y}_0$ in $W_0^{1,q}(\Omega)\cap W^{2,q}(\Omega)$ with $q>n+2$,
which implies $t<\min(T(y_0),\widehat{T})$
%Let us first note that since equation (\ref{profile1}) is well-posed in $L^\infty(\Omega)$ through a simple fixed-point
%argument,
and it follows that
\begin{align}\label{tilde}\|\widetilde{\varepsilon}(\widehat{T}-e^{-s_0})\|_{L^\infty(\Omega)}\leq \widetilde{C}_0\|\widetilde{\varepsilon}_0\|_{W_0^{1,q}(\Omega)\cap W^{2,q}(\Omega)},\end{align}
 whenever $\|\widetilde{\varepsilon}_0\|_{W_0^{1,q}(\Omega)\cap W^{2,q}(\Omega)}\leq 1/\widetilde{C}_0$ for some
$\widetilde{C}_0=\widetilde{C}_0(s_0)$, where $q>n+2$.

\medskip

\medskip

Now, we prove that for $s_0$ large enough and $(\tau,\alpha)$ satisfies (\ref{lem 6.2}), we have
\begin{align}\label{1}&|\overline{\psi}_0^1(\sigma_0)|+|\overline{\psi}_1^1(\sigma_0)|+|\overline{\psi}_2^1(\sigma_0)|+
\Big|\partial_\tau\Big[\Big(\frac{\sigma_0}{s_0}\Big)^2\overline{\psi}_0^1\Big](\sigma_0)\Big|
+\Big|\partial_\tau\Big[\Big(\frac{\sigma_0}{s_0}\Big)^2\overline{\psi}_1^1\Big](\sigma_0)\Big|\nonumber\\
&+|\partial_\alpha \overline{\psi}_0^1(\sigma_0)|+|\partial_\alpha \overline{\psi}_1^1(\sigma_0)| +\left\|\frac{\overline{\psi}_{-}^1(z,\sigma_0)}{1+|z|^3}\right\|_{L^\infty(\mathbb{R})}+\|\overline{\psi}_e^1(\sigma_0)\|_{L^\infty(\mathbb{R})}\nonumber\\
\leq &C\|\widetilde{\varepsilon}(\widehat{T}-e^{-s_0})\|_{L^{\infty}(\Omega)}\leq C_0(s_0)\|\widetilde{\varepsilon}_0\|_{
{W_0^{1,q}(\Omega)\cap W^{2,q}(\Omega)}}.
\end{align}

Indeed, for $\overline{\psi}_0^1(\sigma_0)$, it holds that
\begin{align}|\overline{\psi}_0^1(\sigma_0)|=&\Big|\int \chi_1(z,\sigma_0)\overline{\psi}^1(z,\sigma_0)\rho(z)dz\Big|\nonumber\\=&\Big|\int_{(\Omega-a)e^{\sigma_0/2}}\rho(z)(1+\tau)^{\frac{1}{p-1}}e^{-\frac{s_0}{p-1}}
\widetilde{\varepsilon}(e^{-\frac{s_0}{2}}\widetilde{z}+\widehat{a},\widehat{T}-e^{-s_0})\chi_1(z,\sigma_0)\chi(z,\sigma_0)dz\Big|.\nonumber\end{align}
%$\log(1+\tau)=s_0-\sigma_0$ which implies $(1+\tau)^{\frac{1}{p-1}}=(e^{s_0-\sigma_0})^{\frac{1}{p-1}}$.
It is easy to see that $z\in (\Omega-a)e^{\sigma_0/2}$ is equivalent to  $\widetilde{z}\in (\Omega-\widehat{a})e^{s_0/2}$.
%this implies $x\in \Omega$.
Hence, \begin{align}\label{psi10}|\overline{\psi}_0^1(\sigma_0)|\leq C\|\widetilde{\varepsilon}(\widehat{T}-e^{-s_0})\|_{L^\infty(\Omega)}.\end{align}

Similarly, we have
\begin{align}|\overline{\psi}_1^1(\sigma_0)|
%=&\Big|\int k_1(z)\chi_1(z,\sigma_0)\overline{\psi}^1(z,\sigma_0)\rho(z)dz\Big|\nonumber\\=&\Big|\int_{(\Omega-a)e^{\sigma_0/2}}\rho(z)\frac{z}{2}(1+\tau)^{\frac{1}{p-1}}e^{-\frac{s_0}{p-1}}
%\widetilde{\varepsilon}(e^{-\frac{s_0}{2}}\widetilde{z}+\widehat{a},\widehat{T}-e^{-s_0})\chi_1(z,\sigma_0)\chi(z,\sigma_0)dz\Big|\nonumber\\
\leq C\|\widetilde{\varepsilon}(\widehat{T}-e^{-s_0})\|_{L^{\infty}(\Omega)},
\end{align}
 \begin{align}|\overline{\psi}_2^1(\sigma_0)|
\leq C\|\widetilde{\varepsilon}(\widehat{T}-e^{-s_0})\|_{L^{\infty}(\Omega)},
\end{align}
and
\begin{align}\label{psiee}\|\overline{\psi}_e^1(\sigma_0)\|_{{L^\infty(\mathbb{R})}}=\|(1-\chi_1(z,\sigma_0))\overline{\psi}^1(\sigma_0)\|_{{L^\infty(\mathbb{R})}}
\leq  C\|\widetilde{\varepsilon}(\widehat{T}-e^{-s_0})\|_{L^{\infty}(\Omega)}.
\end{align}

%Indeed, when $z\not\in (\Omega-a)e^{\sigma_0/2}$, $\overline{\psi}^1(z,\sigma_0)=0$,
 %when $z\in (\Omega-a)e^{\sigma_0/2}$, $x\in \Omega$, \begin{align}\|\overline{\psi}_e^1(\sigma_0)\|_{{L^\infty(\mathbb{R})}}
%\leq  C\|\widetilde{\varepsilon}(\widehat{T}-e^{-s_0})\|_{L^{\infty}(\Omega)}.
%\end{align}

By the above estimates,
\begin{align}\label{psinegative}&\left\|\frac{\overline{\psi}^1_{-}(z,\sigma_0)}{1+|z|^3}\right\|_{L^{\infty}(\mathbb{R})}=\left\|\frac{\overline{\psi}^1(z,\sigma_0)\chi_1(z,\sigma_0)-\sum\limits_{m=0}^2 \overline{\psi}_m^1(\sigma_0)h_m(z)}{1+|z|^3}\right\|_{L^{\infty}(\mathbb{R})}\nonumber\\ \leq &C\|\widetilde{\varepsilon}(\widehat{T}-e^{-s_0})\|_{L^{\infty}(\Omega)}.\end{align}

For $\frac{\partial}{\partial\tau}\Big[\Big(\frac{\sigma_0}{s_0}\Big)^2\overline{\psi}_0^1\Big](\sigma_0)$, we have

\begin{align}\Big(\frac{\sigma_0}{s_0}\Big)^2\overline{\psi}_0^1(\sigma_0)= & \Big(\frac{s_0-\log(1+\tau)}{s_0}\Big)^2\int_{\mathbb{R}}\chi_1(z,\sigma_0){\overline{\psi}}^1(z,\sigma_0)\rho(z)dz \nonumber\\
 =&\Big(\frac{s_0-\log(1+\tau)}{s_0}\Big)^2(1+\tau)^{-\frac{1}{2}}\int_{\mathbb{R}}\chi_1(z,s_0-\log(1+\tau)){\overline{\psi}}^1
 (z,\sigma_0)\rho(z)d\widetilde{z}.\nonumber
\end{align}

We see that
\begin{align}{\overline{\psi}}^1(z,\sigma_0)=\left\{\begin{array}{ll}\label{ru} (1+\tau)^{\frac{1}{p-1}}e^{-\frac{s_0}{p-1}}
\widetilde{\varepsilon}(e^{-\frac{s_0}{2}}\widetilde{z}+\widehat{a},\widehat{T}-e^{-s_0})\chi(z,\sigma_0),\ z\in (\Omega-a)e^{\sigma_0/2},\\
0,\ \mbox{otherwise}.\nonumber
\end{array}\right.\end{align}
However, we can take $s_0$ large enough such that $|a-\widehat{a}|=|\alpha e^{-\frac{s_0}{2}}|\leq  e^{-\frac{s_0}{2}}<\frac{\varepsilon_0}{4}$. Thus, since
$(\widehat{a}-4\widehat{\delta},\widehat{a}+4\widehat{\delta})\subset\omega$, $\varepsilon_0\in(0,\widehat{\delta}]$ and $\chi(z,\sigma_0)$ is 0 outside a small neighborhood of $a$ when $|a-\widehat{a}|<\frac{\varepsilon_0}{4}$, it is easy to check that ${\overline{\psi}}^1(z,\sigma_0)$ is smooth in $\mathbb{R}$.

\medskip

Then, we have that for $s_0$ large enough,
\begin{align}&\frac{\partial}{\partial\tau}\Big[\Big(\frac{\sigma_0}{s_0}\Big)^2\overline{\psi}_0^1\Big](\sigma_0)=
-2\frac{1}{s_0(1+\tau)}\frac{s_0-\log(1+\tau)}{s_0}
\int_{(\Omega-a)e^{\sigma_0/2}}\rho(z)(1+\tau)^{\frac{1}{p-1}}e^{-\frac{s_0}{p-1}}\nonumber\\&\cdot
\widetilde{\varepsilon}(e^{-\frac{s_0}{2}}(z\sqrt{1+\tau}+\alpha)+\widehat{a},\widehat{T}-e^{-s_0})\chi_1(z,s_0-\log(1+\tau))\chi(z,\sigma_0)dz
\nonumber\\&-\frac{1}{2}(1+\tau)^{-1}\Big(\frac{s_0-\log(1+\tau)}{s_0}\Big)^2
\int_{(\Omega-a)e^{\sigma_0/2}}\rho(z)\chi_1(z,s_0-\log(1+\tau))(1+\tau)^{\frac{1}{p-1}}e^{-\frac{s_0}{p-1}}\nonumber\\&\cdot
\widetilde{\varepsilon}(e^{-\frac{s_0}{2}}(z\sqrt{1+\tau}+\alpha)+\widehat{a},\widehat{T}-e^{-s_0})
\chi(z,\sigma_0)dz
\nonumber\\&+\frac{1}{\sqrt{4\pi}}\Big(\frac{s_0-\log(1+\tau)}{s_0}\Big)^2
\int_{(\Omega-a)e^{\sigma_0/2}}\rho(z)\frac{z^2}{4}(1+\tau)^{\frac{1}{p-1}-1}\chi_1(z,s_0-\log(1+\tau))e^{-\frac{s_0}{p-1}}\nonumber\\&\cdot
\widetilde{\varepsilon}(e^{-\frac{s_0}{2}}(z\sqrt{1+\tau}+\alpha)+\widehat{a},\widehat{T}-e^{-s_0})
\chi(z,\sigma_0)dz
\nonumber\\&-\frac{1}{2}\Big(\frac{s_0-\log(1+\tau)}{s_0}\Big)^2
\int_{(\Omega-a)e^{\sigma_0/2}}\rho(z)z(1+\tau)^{\frac{1}{p-1}-1}\chi_{1z}(z,s_0-\log(1+\tau))e^{-\frac{s_0}{p-1}}\nonumber\\&\cdot
\widetilde{\varepsilon}(e^{-\frac{s_0}{2}}(z\sqrt{1+\tau}+\alpha)+\widehat{a},\widehat{T}-e^{-s_0})
\chi(z,\sigma_0)dz
\nonumber\\&-\frac{1}{2}\Big(\frac{s_0-\log(1+\tau)}{s_0}\Big)^2
\int_{(\Omega-a)e^{\sigma_0/2}}\rho(z)(1+\tau)^{\frac{1}{p-1}-1}\chi_1(z,s_0-\log(1+\tau))e^{-\frac{s_0}{p-1}}\nonumber\\&\cdot
\widetilde{\varepsilon}(e^{-\frac{s_0}{2}}(z\sqrt{1+\tau}+\alpha)+\widehat{a},\widehat{T}-e^{-s_0})
\chi_z(z,\sigma_0)zdz
\nonumber\\&+\Big(\frac{s_0-\log(1+\tau)}{s_0}\Big)^2
\int_{(\Omega-a)e^{\sigma_0/2}}\rho(z){\frac{1}{p-1}}(1+\tau)^{\frac{1}{p-1}-1}\chi_1(z,s_0-\log(1+\tau))
e^{-\frac{s_0}{p-1}}\nonumber\\&\cdot
\widetilde{\varepsilon}(e^{-\frac{s_0}{2}}(z\sqrt{1+\tau}+\alpha)+\widehat{a},\widehat{T}-e^{-s_0})\chi(z,\sigma_0)dz
\nonumber\\&+\Big(\frac{s_0-\log(1+\tau)}{s_0}\Big)^2
\int_{(\Omega-a)e^{\sigma_0/2}}\rho(z)(1+\tau)^{\frac{1}{p-1}}e^{-\frac{s_0}{p-1}}\nonumber\\&\cdot
\widetilde{\varepsilon}(e^{-\frac{s_0}{2}}(z\sqrt{1+\tau}+\alpha)+\widehat{a},\widehat{T}-e^{-s_0})
\chi_1(z,\sigma_0)\frac{\chi_{\sigma_0}(z,\sigma_0)}{-(1+\tau)}dz\nonumber\\&+\Big(\frac{s_0-\log(1+\tau)}{s_0}\Big)^2
\int_{(\Omega-a)e^{\sigma_0/2}}\rho(z)(1+\tau)^{\frac{1}{p-1}}e^{-\frac{s_0}{p-1}}\nonumber\\&\cdot
\widetilde{\varepsilon}(e^{-\frac{s_0}{2}}(z\sqrt{1+\tau}+\alpha)+\widehat{a},\widehat{T}-e^{-s_0})
\chi(z,\sigma_0)\frac{\chi_{1\sigma_0}(z,\sigma_0)}{-(1+\tau)}dz,\end{align}
from which and the following equalities,
\begin{align}\label{tilde1}\chi_{1\sigma_0}(z,\sigma_0)=-\chi_0'\Big(\frac{|z|}{K_0\sqrt{\sigma_0}}\Big)\frac{|z|}{K_0}\frac{\sigma_0^{-3/2}}{2},\ \chi_{1z}(z,\sigma_0)=\chi_0'\Big(\frac{|z|}{K_0\sqrt{\sigma_0}}\Big)\frac{1}{K_0\sqrt{\sigma_0}},\end{align}
\begin{align}\label{tilde2}\chi_{\sigma_0}(z,\sigma_0)=-\chi_0'\Big(\frac{ze^{-\sigma_0/2}}{\varepsilon_0}\Big)\frac{z}{\varepsilon_0}\frac{e^{-\sigma_0/2}}{2},\
\chi_{z}(z,\sigma_0)=\chi_0'\Big(\frac{ze^{-\sigma_0/2}}{\varepsilon_0}\Big)\frac{e^{-\sigma_0/2}}{\varepsilon_0},\end{align}
we have that for $s_0$ large enough,
\begin{align}\label{partial tao 0} \Big|\frac{\partial}{\partial\tau}\Big[\Big(\frac{\sigma_0}{s_0}\Big)^2\overline{\psi}_0^1\Big](\sigma_0)\Big|\leq C\|\widetilde{\varepsilon}_0(\widehat{T}-e^{-s_0})\|_{L^{\infty}(\Omega)}.\end{align}

Similarly to the above inequality,  we have  that for $s_0$ large enough, \begin{align} \Big|\frac{\partial}{\partial\tau}\Big[\Big(\frac{\sigma_0}{s_0}\Big)^2\overline{\psi}_1^1\Big](\sigma_0)\Big|\leq C\|\widetilde{\varepsilon}_0(\widehat{T}-e^{-s_0})\|_{L^{\infty}(\Omega)},\end{align}
\begin{align}|\partial_\alpha \overline{\psi}^1_0(\sigma_0)|\leq C\|\widetilde{\varepsilon}_0(\widehat{T}-e^{-s_0})\|_{L^{\infty}(\Omega)},\end{align}
and
\begin{align}\label{partial alpha} |\partial_\alpha \overline{\psi}^1_1(\sigma_0)|\leq C\|\widetilde{\varepsilon}_0(\widehat{T}-e^{-s_0})\|_{L^{\infty}(\Omega)}.\end{align}

Then, by (\ref{psi10})-(\ref{psinegative}), (\ref{partial tao 0})-(\ref{partial alpha}) and (\ref{tilde}), we can get (\ref{1}).

\medskip

{\bf Step 2. Expansion of $\overline{\psi}^2$}

\medskip

Since
for any $t\in[0,\widehat{T})$, $\widehat{y}(t)\in S^*({K}_0,{\varepsilon}_0,\widehat{A},1/2,\widehat{\eta}_0,\widehat{T},t)$,
where $\widehat{q}(s)\in V_{{K}_0,\widehat{A}}(s)$, for all $s\geq-\log \widehat{T}$, and  $|\widehat{y}(x,t)|\leq \widehat{\eta}_0$ for all $x\in \Omega$ with $|x-\widehat{a}|\geq\frac{{\varepsilon}_0}{2}$.  In particular, for $s_0\geq-\log \widehat{T}$, we have
\begin{align}\left\{\begin{array}{ll}\label{ru1} |\widehat{q}_m(s_0)|\leq \widehat{A}s_0^{-2} (m=0, 1),\ |\widehat{q}_2(s_0)|\leq \widehat{A}^2s_0^{-2}\log s_0,\\|\widehat{q}_{-}(\widetilde{z},s_0)|\leq \widehat{A}s_0^{-2}(1+|\widetilde{z}|^3),\ \ \ \ |\widehat{q}_e(\widetilde{z},s_0)|\leq \widehat{A}^2s_0^{-1/2},
\end{array}\right.\end{align}
and
\begin{align}\label{ru2}\|\widehat{q}(s_0)\|_{L^\infty}\leq C\widehat{A}^2s_0^{-1/2}.\end{align}

(i) For $s_0$ large enough,
\begin{align}\label{psi2inequality}\Big|\Big(\frac{\sigma_0}{s_0}\Big)^2\overline{\psi}^2_0(\sigma_0)-\widehat{q}_0(s_0)\Big|\leq&\frac{C\widehat{A}}{s_0^{2}}|\tau|+\frac{C\widehat{A}}{s_0^{2}}|\alpha|+
\frac{C\widehat{A}^2e^{-s_0}}{s_0^{\frac{1}{2}}}+\frac{C\widehat{A}^2}{s_0^{\frac{1}{2}}}(|\tau|+|\alpha|^3)
\nonumber\\&+C(|\tau|+|\alpha|^2)\frac{\widehat{A}^2}{s_0^{2}}\log s_0.\end{align}

Indeed, by (\ref{psibar2}) and (\ref{psibar7}),
\begin{align}\overline{\psi}^2(z,\sigma_0)=\left\{\begin{array}{ll}\label{ru} (1+\tau)^{\frac{1}{p-1}}\widehat{q}(\widetilde{z},s_0),\ \widetilde{z}\in (\Omega-\widehat{a})e^{s_0/2},\\0,
\ \widetilde{z}\not\in (\Omega-\widehat{a})e^{s_0/2},
\end{array}\right.\end{align}
we have by Lemma \ref{lemma g} (i) that for $s_0$ large enough,
\begin{align}\label{psi20}&\Big|\Big(\frac{\sigma_0}{s_0}\Big)^2\overline{\psi}^2_0(\sigma_0)-\widehat{q}_0(s_0)\Big|=\Big|\widehat{q}_0(s_0)\frac{\tau}{p-1}+O(\alpha \widehat{q}_1(s_0))\nonumber\\
&+O((|\tau|+\alpha^2)\widehat{q}_2(s_0))+O\Big(\Big(e^{-s_0}+\tau^2+\frac{|\tau|}{s_0}+|\tau\alpha|+|\alpha|^3\Big)\|g^1(s_0)\|_{L^{\infty}}\Big)\Big|,\end{align}
where
\begin{align}{g}^1(\widetilde{z},s_0)=\left\{\begin{array}{ll}\label{g2} \widehat{q}(\widetilde{z},s_0),\
\widetilde{z}\in (\Omega-\widehat{a})e^{s_0/2}(\Longleftrightarrow z\in (\Omega-a)e^{\sigma_0/2}),\\0,
\ \widetilde{z}\not\in (\Omega-\widehat{a})e^{s_0/2}.
\end{array}\right.\end{align}
Hence, by (\ref{ru1}), (\ref{ru2}) and (\ref{psi20}), (\ref{psi2inequality}) hold.

%\begin{align}\overline{\eta}(z,\sigma_0)=\left\{\begin{array}{ll}\label{ru} 0,\ \widetilde{z}\in (\Omega-\widehat{a})e^{s_0/2},\\-(1+\tau)^{\frac{1}{p-1}}{\varphi}(\widetilde{z},s_0),
%\ \widetilde{z}\not\in (\Omega-\widehat{a})e^{s_0/2}.
%\end{array}\right.\end{align}
%Thus, $\overline{\psi}^2(z,\sigma_0)+\overline{\eta}(z,\sigma_0)=(1+\tau)^{\frac{1}{p-1}}\widehat{q}(\widetilde{z},s_0),\ \widetilde{z}\in \mathbb{R}$.

\medskip

%When $s_0$ is large enough and $\widetilde{z}\in \mathbb{R}\setminus(\Omega-\widehat{a})e^{s_0/2}$, we have
%$|\widetilde{z}|\geq 2K_0\sqrt{s_0}$. Thus, $\overline{\eta}\chi_1(\widetilde{z},s_0)=0$. Thus,
%$g^2_m(s_0)=\widehat{q}_m(s_0)$, $m=0,\ 1, \ 2...$, $g^2_{-}(s_0)=\widehat{q}_{-}(s_0)$.

\medskip

(ii) For $s_0$ large enough,
\begin{align}\label{psi3inequality}&\Big|\Big(\frac{\sigma_0}{s_0}\Big)^2\overline{\psi}^2_1(\sigma_0)-\widehat{q}_1(s_0)\Big|\nonumber\\\leq&
\frac{C\widehat{A}^2e^{-s_0}}{s_0^{\frac{1}{2}}}+\frac{C\widehat{A}^2}{s_0^{\frac{1}{2}}}\Big(|\tau|^2+|\alpha|^3+|\tau\alpha|+\frac{|\tau|}{s_0}\Big)
+\frac{C\widehat{A}}{s_0^{2}}|\tau|+\frac{C\widehat{A}^2}{s_0^{2}}\log s_0|\alpha|\nonumber\\&+(\frac{C\widehat{A}}{s_0^{2}}
+e^{-Cs_0})(|\tau|+|\alpha|^2).\end{align}

Indeed, since $k_0$, $k_1$, $k_2$ are all orthogonal to $k_3$, we have
\begin{align}&|\widehat{q}_3(s_0)|\nonumber\\
%=&\Big|\int_{\mathbb{R}} \widehat{q}(\widetilde{z},s_0)k_3(\widetilde{z})\chi_1(\widetilde{z},s_0)\rho(\widetilde{z})d\widetilde{z}\Big|\nonumber\\
=&\Big|\int_{\mathbb{R}} \widehat{q}_{-}(\widetilde{z},s_0)k_3(\widetilde{z})\chi_1(\widetilde{z},s_0)\rho(\widetilde{z})d\widetilde{z}
+\int_{\mathbb{R}}\sum\limits_{m=0}^2 q_m(s_0)h_m(\widetilde{z})k_3(\widetilde{z})\chi_1(\widetilde{z},s_0)\rho(\widetilde{z})d\widetilde{z}\Big|
\nonumber\\
%&\leq &C\frac{\widehat{A}}{s_0^2}\Big|\int_{\mathbb{R}} (1+|\widetilde{z}|^3)k_3(\widetilde{z})\chi_1(\widetilde{z},s_0)\rho(\widetilde{z})d\widetilde{z}
%&\Big|\nonumber\\&+\int_{\mathbb{R}}\sum\limits_{m=0}^2
%q_m(s_0)h_m(\widetilde{z})k_3(\widetilde{z})(\chi_1(\widetilde{z},s_0)-1+1)\rho(\widetilde{z})d\widetilde{z}\Big|
%\nonumber\\
\leq &C\frac{\widehat{A}}{s_0^2}+Ce^{-Cs_0}.\label{q33}\end{align}
Similarly, we have
\begin{align}\label{q44}|\widehat{q}_4(s_0)|\leq &C\frac{\widehat{A}}{s_0^2}+Ce^{-Cs_0}.\end{align}
Then, we have by Lemma \ref{lemma g} (ii) that for $s_0$ large enough,
\begin{align}\label{3}&\Big|\Big(\frac{\sigma_0}{s_0}\Big)^2\overline{\psi}^2_1(\sigma_0)-\widehat{q_1}(s_0)\Big|\nonumber\\=&\Big|\widehat{q}_1(s_0)
\frac{(p+1)\tau}{2(p-1)}+2\alpha \widehat{q}_2(s_0)
+O((|\tau|+\alpha^2)\widehat{q}_3(s_0))\nonumber\\ &+O\Big(\Big(e^{-s_0}+\tau^2+\frac{|\tau|}{s_0}+|\tau\alpha|+|\alpha|^3)\|g^2(s_0)\|_{L^{\infty}}\Big)\Big|.\end{align}
Hence, by (\ref{ru1}), (\ref{ru2}), (\ref{g2})  and (\ref{q33}), (\ref{psi3inequality}) holds.

\medskip

(iii) For $s_0$ large enough,
\begin{align}\label{psi4inequality}|\overline{\psi}_2^2(\sigma_0)|\leq &
\frac{C\widehat{A}^2e^{-s_0}}{s_0^{\frac{1}{2}}}+\frac{C\widehat{A}^2}{s_0^{\frac{1}{2}}}(|\tau|+|\alpha|^3)
+\frac{C\widehat{A}}{s_0^{2}}|\alpha|+\frac{C\widehat{A}}{s_0^{2}}|\alpha|^2\nonumber\\&+Ce^{-Cs_0}|\alpha|+Ce^{-Cs_0}|\alpha|^2+\frac{\widehat{A}^2}{s_0^{2}}\log s_0.\end{align}

Indeed, by Lemma \ref{lemma g} (iii) that for $s_0$ large enough,
\begin{align}\label{4}|\overline{\psi}_2^2(\sigma_0)|=&\Big|\widehat{q}_2(s_0)+O(\alpha \widehat{q}_3(s_0))
+O(\alpha^2\widehat{q}_4(s_0))+O((e^{-s_0}+
|\tau|+|\alpha|^3)\|g^2(s_0)\|_{L^{\infty}})\Big|.\end{align}
Hence, by (\ref{ru1}), (\ref{ru2}), (\ref{g2}), (\ref{q33}) and (\ref{q44}), (\ref{psi4inequality}) holds.

\medskip

(iv) For $s_0$ large enough,
\begin{align}\label{psi5inequality}|\sigma_0^{-2}\partial_\tau[\sigma_0^2\overline{\psi}_0^2](\sigma_0)|\leq
\frac{C\widehat{A}^2}{s_0^{\frac{1}{2}}}(|\tau|+|\alpha|)
+\frac{C\widehat{A}}{s_0^{2}}+\frac{C\widehat{A}^2}{s_0^{3/2}}+\frac{C\widehat{A}^2}{s_0^{2}}\log s_0.\end{align}

Indeed, by Lemma \ref{lemma g} (iv),
\begin{align}|\sigma_0^{-2}\partial_\tau[\sigma_0^2\overline{\psi}_0^2](\sigma_0)|=\Big|\frac{\widehat{q}_0(s_0)}{p-1}
+O(\widehat{q}_2(s_0))+O((s_0^{-1}+
|\tau|+|\alpha|)\|g^2(s_0)\|_{L^{\infty}}\Big|.\nonumber\end{align}
Hence, by (\ref{ru1}), (\ref{ru2}) and (\ref{g2}), (\ref{psi5inequality}) holds.

\medskip

(v) For $s_0$ large enough,
\begin{align}\label{psi6inequality}|\sigma_0^{-2}\partial_\tau[\sigma_0^2\overline{\psi}_1^2](\sigma_0)|
\leq
\frac{C\widehat{A}^2}{s_0^{\frac{1}{2}}}(s_0^{-1}+|\tau|+|\alpha|)
+\frac{C\widehat{A}}{s_0^{2}}+Ce^{-Cs_0}.\end{align}

Indeed, by Lemma \ref{lemma g} (v),
\begin{align}&|\sigma_0^{-2}\partial_\tau[\sigma_0^2\overline{\psi}_1^2](\sigma_0)|
=\Big|\sigma_0^{-2}s_0^2\partial_\tau\Big[(\frac{\sigma_0}{s_0})^2\overline{\psi}_1^2\Big](\sigma_0)\Big|
\nonumber\\=&\Big|\frac{s_0^2}{\sigma_0^2}[O(\widehat{q}_1(s_0))+O(\widehat{q}_3(s_0))+O((s_0^{-1}+
|\tau|+|\alpha|)\|g^2(s_0)\|_{L^{\infty}}]\Big|.\nonumber\end{align}
Hence, by (\ref{ru1}), (\ref{ru2}), (\ref{g2})  and (\ref{q33}), (\ref{psi6inequality}) holds.

\medskip

(vi) For $s_0$ large enough,
 \begin{align}\label{psi7inequality}|\partial_\alpha \overline{\psi}_0^2(\sigma_0)| \leq \frac{C\widehat{A}}{s_0^{2}}+
\frac{C\widehat{A}^2e^{-s_0}}{s_0^{\frac{1}{2}}}+\frac{C\widehat{A}^2}{s_0^{\frac{1}{2}}}(|\tau|+|\alpha|^2)
+C\frac{\widehat{A}^2}{s_0^{2}}\log s_0|\alpha|.\end{align}

 Indeed, by Lemma \ref{lemma g} (vi),
\begin{align}|\partial_\alpha \overline{\psi}_0^2(\sigma_0)|=|O(\widehat{q}_1(s_0))+O(\alpha \widehat{q}_2(s_0))+O((e^{-s_0}+
|\tau|+|\alpha|^2)\|g^2(s_0)\|_{L^{\infty}}|.\end{align}
Hence, by (\ref{ru1}), (\ref{ru2}) and (\ref{g2}), (\ref{psi7inequality}) holds.

\medskip

(vii) For $s_0$ large enough,
\begin{align}\label{psi8inequality}|\partial_\alpha \overline{\psi}_1^2(\sigma_0)|\leq \frac{C\widehat{A}}{s_0^{2}}|\alpha|+
\frac{C\widehat{A}^2e^{-s_0}}{s_0^{\frac{1}{2}}}+\frac{C\widehat{A}^2}{s_0^{\frac{1}{2}}}(|\tau|+|\alpha|^2)
+C\frac{\widehat{A}^2}{s_0^{2}}\log s_0+Ce^{-Cs_0}|\alpha|.\end{align}

 Indeed, by Lemma \ref{lemma g} (vii),
\begin{align}\label{8}|\partial_\alpha \overline{\psi}_1^2(\sigma_0)|=|2\widehat{q}_2(s_0)+O(\alpha \widehat{q}_3(s_0))+O((e^{-s_0}+
|\tau|+|\alpha|^2)\|g^2(s_0)\|_{L^{\infty}}|.\end{align}
Hence, by (\ref{ru1}), (\ref{ru2}), (\ref{g2})  and (\ref{q33}), (\ref{psi8inequality}) holds.

\medskip

(viii) For $s_0$ large enough,
\begin{align}\label{psi9inequality}&\left\|\frac{\overline{\psi}_{-}^2(z,\sigma_0)}{1+|z|^3}\right\|_{L^{\infty}(\mathbb{R})}\nonumber\\ \leq
& \frac{C\widehat{A}}{s_0^{2}}(|\alpha|+1)+
\frac{C\widehat{A}^2}{s_0^{\frac{5}{2}}}+\frac{C\widehat{A}^2}{s_0^{\frac{1}{2}}}(|\tau|+|\alpha|^3)
+C\frac{\widehat{A}^2}{s_0^{2}}\log s_0|\alpha|.\end{align}

 Indeed, by Lemma \ref{lemma g} (viii),
\begin{align}\label{9}\left\|\frac{\overline{\psi}_{-}^2(z,\sigma_0)}{1+|z|^3}\right\|_{L^{\infty}(\mathbb{R})}\leq & C\left\|\frac{{\widehat{q}}_{-}(\widetilde{z},s_0)}{1+|\widetilde{z}|^3}\right\|_{L^{\infty}(\mathbb{R})}
+C|\alpha|(|\widehat{q}_1(s_0)|+|\widehat{q}_2(s_0)|)\nonumber\\&+O((s_0^{-2}+
|\tau|+|\alpha|^3)\|g^2(s_0)\|_{L^{\infty}}).\end{align}
Hence, by (\ref{ru1}), (\ref{ru2}) and (\ref{g2}), (\ref{psi9inequality}) holds.

\medskip

(ix) For $s_0$ large enough, by  Lemma \ref{lemma g} (ix), (\ref{ru1}), (\ref{ru2}) and (\ref{g2}),
\begin{align}\label{10}&\|\overline{\psi}_{e}^2(\sigma_0)\|_{L^{\infty}(\mathbb{R})}\leq \|{\widehat{q}}_{e}(\widetilde{z},s_0)\|_{L^{\infty}(\mathbb{R})}
+C(|\tau|+|\alpha|)\|g^2(s_0)\|_{L^{\infty}}\nonumber\\ \leq
& \frac{C\widehat{A}^2}{s_0^{\frac{1}{2}}}(|\tau|+|\alpha|)
+\frac{C\widehat{A}^2}{s_0^{1/2}}.\end{align}

{\bf Step 3. Expansion of $\overline{\psi}^3$ and  $\overline{\psi}^4$}

\medskip

It is easy to check that
\begin{align}\label{ff}
\varphi(z,s)=&\Big(p-1+\frac{(p-1)^2}{4p}\frac{z^2}{s}\Big)^{-\frac{1}{p-1}}+\frac{(p-1)^{-\frac{1}{p-1}}}{2ps}\nonumber\\
=&\kappa-\frac{\kappa}{4ps}(|z|^2-2)+O\Big(\frac{|z|^4}{s^2}\Big).
\end{align}
%(Zaag 80 page p.72).
%Handling the truncation as usual, we write
%\begin{align}\label{fff}
%\varphi_2(s)=&\int \chi_1(z,s)\varphi(z,s)k_2(z)\rho(z)dz\nonumber\\
%=&\int \varphi(z,s)k_2(z)\rho(z)dz+O(e^{-Cs})
%=-\frac{\kappa}{4ps}+O(s^{-2}),
%\end{align}
%we have just proved the estimate for $\varphi_2(s)$. For $\varphi_4(s)$, since $h_4(s)$ and $h_2(s)$ are orthogonal, we write from
%(\ref{ff})
%$$\int \varphi(z,s)k_4(z)\rho(z)dz=O(s^{-2}).$$
%Handling the cut-off term as usual,
%$$\varphi_4(s)=O(s^{-2}).$$

Recalling that $k_2(z)=\frac{z^2-2}{8}$, it follows that
\begin{align}\label{f1}
\int \varphi(z,s)k_2(z)\rho(z)dz
=&\int \Big[\kappa-\frac{\kappa}{4ps}(|z|^2-2)+O\Big(\frac{|z|^4}{s^2}\Big)\Big]\frac{z^2-2}{8}\rho(z)dz\nonumber\\
=&-\frac{\kappa}{4ps}+O(s^{-2}),
\end{align}
and we have that for $s_0$ large enough,
\begin{align}\label{psi222}&\varphi_0(s)=\kappa+O(s^{-2}),\ \varphi_2(s)=-\frac{\kappa}{4ps}+O(s^{-2}),\\
\label{psi134}&\varphi_1(s)=\varphi_3(s)=0,\ \varphi_4(s)=O(s^{-2}),\ |\varphi(s)|\leq C.\end{align}

Define
\begin{align}{g}^2(\widetilde{z},s_0)=\left\{\begin{array}{ll}{\varphi}(\widetilde{z},s_0),\ \widetilde{z}\in (\Omega-\widehat{a})e^{s_0/2},\\0,
\ \widetilde{z}\not\in (\Omega-\widehat{a})e^{s_0/2}.
\end{array}\right.\end{align}
Then if $s_0$ is large enough, we have \begin{align}\label{g21234} g^2_i(s_0)=\varphi_i(s_0), i=0,\ 1, \ 2, \ 3,\ 4, \
\|g^2(s_0)\|\leq C.\end{align}
%it holds that
%\begin{align}\label{ff1}
%&\overline{\psi}^3(z,\sigma_0)=(1+\tau)^{\frac{1}{p-1}}\Big\{\Big(p-1+\frac{(p-1)^2}{4p}\frac{\widetilde{z}^2}{s_0}\Big)^{-\frac{1}{p-1}}
%+\frac{\kappa}{2ps_0}\Big\}, \nonumber\\&\overline{\psi}^4(z,\sigma_0)=-\Big\{\Big(p-1+\frac{(p-1)^2}{4p}\frac{{z}^2}{\sigma_0}\Big)^{-\frac{1}{p-1}}
%+\frac{\kappa}{2p\sigma_0}\Big\},\end{align}
%$$\overline{\psi}_0^3(\sigma_0)=\int_{\mathbb{R}} (1+\tau)^{\frac{1}{p-1}}\varphi(\widetilde{z},s_0)\chi_1(z,\sigma_0)\rho(z)dz.$$

Let \begin{align}{\eta}({z},\sigma_0)=\left\{\begin{array}{ll}\label{ru} 0,\ {z}\in (\Omega-a)e^{\sigma_0/2},\\(1+\tau)^{\frac{1}{p-1}}{\varphi}(\widetilde{z},s_0),
\ {z}\not\in (\Omega-a)e^{\sigma_0/2}.
\end{array}\right.\end{align}
By (\ref{psibar3}) and (\ref{psibar7}), it holds that $\overline{\psi}^3(z,\sigma_0)+{\eta}(z,\sigma_0)=(1+\tau)^{\frac{1}{p-1}}\varphi(\widetilde{z},s_0),\ {z}\in \mathbb{R}$.

Since $|a-\widehat{a}|
=|\alpha|e^{-s_0/2}\leq e^{-s_0/2}$, $(\widehat{a}-4\widehat{\delta}, \widehat{a}+4\widehat{\delta})\subset \omega$ and $\widehat{\varepsilon}_0\in(0,\widehat{\delta}]$, we have that when $s_0$ is large enough, which implies $\sigma_0$ is large enough, if $z\not\in (\Omega-a)e^{\sigma_0/2}$, then $|z|\geq 2K_0\sqrt{\sigma_0}$.
Thus, $$\eta\chi_1(z,\sigma_0)=0.$$
Hence, we have that for $s_0$ large enough,
$$\overline{\psi}_0^3(\sigma_0)=\int_{\mathbb{R}} (1+\tau)^{\frac{1}{p-1}}\varphi({z}\sqrt{1+\tau}+\alpha,\sigma_0+\log(1+\tau))\chi_1(z,\sigma_0)\rho(z)dz.$$

On the other hand, by (\ref{psibar4}) and (\ref{psibar7}),
$$\overline{\psi}_0^4(\sigma_0)=-\int_{\mathbb{R}} \varphi(z,\sigma_0)\chi_1(z,\sigma_0)\rho(z)dz.$$

Thus, we can get that for $s_0$ large enough,
\begin{align}\label{11}&\Big|\Big(\frac{\sigma_0}{s_0}\Big)^2\overline{\psi}^3_0(\sigma_0)+\Big(\frac{\sigma_0}{s_0}\Big)^2\overline{\psi}^4_0(\sigma_0)
-\frac{\kappa \tau}{p-1}\Big|\nonumber\\ \leq & Ce^{-\sigma_0}+C\frac{|\tau|}{\sigma_0}+C\frac{\alpha^2}{\sigma_0}+C\tau^2.
\end{align}

Remember $b=\frac{(p-1)^2}{4p}$, $\frac{2b\kappa}{(p-1)^2}=\frac{\kappa}{2p}$.
By  Lemma \ref{lemma g} (ii), (\ref{psi222}), (\ref{psi134}) and (\ref{g21234}), for $s_0$ large enough,
\begin{align}\label{12}&\Big|\Big(\frac{\sigma_0}{s_0}\Big)^2\overline{\psi}^3_1(\sigma_0)+\frac{2b\kappa\alpha}{(p-1)^2s_0}\Big|\nonumber\\ = & \Big|\Big(\frac{\sigma_0}{s_0}\Big)^2\overline{\psi}^3_1(\sigma_0)+\frac{\kappa}{2p}\frac{\alpha}{s_0}\Big|\nonumber\\ \leq & O\Big(\frac{|\alpha|}{s_0^2}\Big)+O(e^{-s_0}+\tau^2+\frac{|\tau|}{s_0}+|\tau\alpha|+|\alpha|^3)\|\varphi(s_0)\|_{L^\infty}
\nonumber\\ \leq & C\Big(\frac{|\alpha|}{s_0^2}+e^{-s_0}+\tau^2+\frac{|\tau|}{s_0}+|\tau\alpha|+|\alpha|^3\Big).
\end{align}

By  Lemma \ref{lemma g} (iii), (\ref{psi222}), (\ref{psi134}) and (\ref{g21234}), for $s_0$ large enough,
\begin{align}\label{13}&\overline{\psi}^3_2(\sigma_0)\nonumber\\ = & -\frac{\kappa}{4ps_0}+O\Big(\frac{1}{s_0^2}\Big)+O\Big(\frac{\alpha^2}{s_0^2}\Big)+O(e^{-s_0}
+|\tau|+|\alpha|^3)\|\varphi(s_0)\|_{L^\infty}
\nonumber\\ = & -\frac{\kappa}{4ps_0}+O\Big(\frac{1}{s_0^2}\Big)+O(e^{-s_0}
+|\tau|+|\alpha|^3).
\end{align}

By  Lemma \ref{lemma g} (iv), (\ref{psi222}), (\ref{psi134}) and (\ref{g21234}), for $s_0$ large enough,
\begin{align}\label{14}\sigma_0^{-2}\partial_\tau[\sigma_0^2\overline{\psi}^3_0](\sigma_0)
=&\frac{\kappa}{p-1}+O\Big(\frac{1}{s_0^2}\Big)+O(\frac{1}{s_0})+O\Big(\frac{1}{s_0^2}\Big)+O\Big(\frac{1}{s_0}+
|\tau|+|\alpha|\Big)\|\varphi(s_0)\|_{L^{\infty}}\nonumber\\
=&\frac{\kappa}{p-1}+O\Big(\frac{1}{s_0}\Big)+O(
|\tau|+|\alpha|).
\end{align}

By  Lemma \ref{lemma g} (v), (\ref{psi134}) and (\ref{g21234}), for $s_0$ large enough,
\begin{align}\label{15}|s_0^{-2}\partial_\tau[\sigma_0^2\overline{\psi}^3_1](\sigma_0)|
=&O\Big(\frac{1}{s_0}+
|\tau|+|\alpha|\Big)\|\varphi(s_0)\|_{L^{\infty}}\nonumber\\
\leq&C\Big(\frac{1}{s_0}+
|\tau|+|\alpha|\Big).
\end{align}

By  Lemma \ref{lemma g} (vi), (\ref{psi222}), (\ref{psi134}) and (\ref{g21234}),  for $s_0$ large enough,
\begin{align}\label{16}|\partial_\alpha\overline{\psi}^3_0(\sigma_0)|
=&|O(\alpha\varphi_2(s_0))+O(e^{-s_0}+|\tau|+|\alpha|^2)\|\varphi(s_0)\|_{L^\infty}|\nonumber\\
\leq&C\Big(e^{-s_0}+
|\tau|+|\alpha|^2+\frac{|\alpha|}{s_0}\Big).
\end{align}

By  Lemma \ref{lemma g} (vii), (\ref{psi222}), (\ref{psi134}) and (\ref{g21234}), for $s_0$ large enough,
\begin{align}\label{17}\partial_\alpha\overline{\psi}^3_1(\sigma_0)
=&2\varphi_2(s_0)+O(e^{-s_0}+|\tau|+|\alpha|^2)\|\varphi(s_0)\|_{L^\infty}\nonumber\\
=&-\frac{\kappa}{2p}\frac{1}{s_0}+O\Big(\frac{1}{s_0^2}\Big)+O(e^{-s_0}+
|\tau|+|\alpha|^2)\nonumber\\
=&-\frac{\kappa}{2p}\frac{1}{s_0}+O\Big(\frac{1}{s_0^2}\Big)+O(
|\tau|+|\alpha|^2)\nonumber\\=&-\frac{2b\kappa}{(p-1)^2}\frac{1}{s_0}+O\Big(\frac{1}{s_0^2}\Big)+O(
|\tau|+|\alpha|^2).
\end{align}

Now for the remaining components, we need more refined estimates, for $s_0$ large enough and for all $z\in (\Omega-a)e^{\sigma_0/2}$,
\begin{align}\label{psi31}|\overline{\psi}^3(z,\sigma_0)-\varphi(z,s_0)|
\leq&C\Big(
|\tau|+\frac{|\alpha|}{s_0}\Big)(1+|z|^3).
\end{align}

Indeed, for $z\in (\Omega-a)e^{\sigma_0/2}$,
\begin{align}\varphi_z(z,s)=-\frac{1}{p-1}\Big(p-1+b\frac{{z}^2}{s}\Big)^{-\frac{1}{p-1}-1}\frac{2z}{4ps}(p-1)^2.
\end{align}
Then, we have that for $s_0$ large enough,
\begin{align}&|\overline{\psi}^3(z,\sigma_0)-\varphi(z,s_0)|\nonumber\\
=&|(1+\tau)^{\frac{1}{p-1}}\varphi(\widetilde{z},s_0)-\varphi(z,s_0)|\nonumber\\=&
|(1+\tau)^{\frac{1}{p-1}}\varphi(z\sqrt{(1+\tau)}+\alpha,s_0)-(1+\tau)^{\frac{1}{p-1}}\varphi(z,s_0)+(1+\tau)^{\frac{1}{p-1}}\varphi(z,s_0)-
\varphi(z,s_0)|
\nonumber\end{align}\begin{align}\leq&(1+\tau)^{\frac{1}{p-1}}|\varphi(z\sqrt{(1+\tau)}+\alpha,s_0)-\varphi(z,s_0)|+|[(1+\tau)^{\frac{1}{p-1}}-1]\varphi(z,s_0)|\nonumber\\\leq&
C\frac{1}{s_0}|z+\theta(z\sqrt{(1+\tau)}+\alpha-z)||z\sqrt{(1+\tau)}+\alpha-z|++C|\tau|\nonumber\\\leq&
C\frac{1}{s_0}(|z|+|z\tau|+|\alpha|)(|\tau z|+|\alpha|)+C|\tau|\nonumber\\\leq&
C\Big(|\tau|+\frac{|\alpha|}{s_0}\Big)(1+|z|^3).
\end{align}

For $z\not\in (\Omega-a)e^{\sigma_0/2}$, $\overline{\psi}^3(z,\sigma_0)-\varphi(z,s_0)=-\varphi(z,s_0)$.

On the other hand, for $s_0$ large enough and for all $z\in (\Omega-a)e^{\sigma_0/2}$, we could obtain
\begin{align}\label{psi32}|\overline{\psi}^3(z,\sigma_0)-\varphi(z,s_0)|
\leq&C\Big(|\tau|+\frac{|\alpha|}{\sqrt{s_0}}\Big).
\end{align}

For all $s$ large enough and for all $z\in \mathbb{R}$, it holds by (\ref{ff}) that
\begin{align}\Big|\varphi(z,s)-\kappa+\frac{\kappa}{4ps}(z^2-2)\Big|\leq C\frac{|z|^4}{s^2},
\end{align}
and we have
\begin{align}\label{psi4}|\partial_s\varphi(z,s)|
\leq C\min\Big(\frac{1}{s}, \frac{1+|z|^2}{s^2}\Big).
\end{align}

%Indeed,
%\begin{align}\partial_s\varphi(z,s)=-\frac{1}{p-1}\Big(p-1+b\frac{{z}^2}{s}\Big)^{-\frac{1}{p-1}-1}\Big(-\frac{bz^2}{s^2}\Big)-\frac{\kappa}{2ps^2}.
%\end{align}
%If $\frac{1}{s}\leq\frac{1+|z|^2}{s^2}$, then $s\leq 1+|z|^2$ which implies $|z|\geq\sqrt{s-1}\geq \sqrt{\frac{s}{2}}$.
%\begin{align}\Big|\frac{1}{p-1}\Big(p-1+b\frac{{z}^2}{s}\Big)^{-\frac{1}{p-1}-1}\Big(\frac{bz^2}{s^2}\Big)\Big|\leq
%C\frac{s^{\frac{1}{p-1}+1}}{|z|^{\frac{2}{p-1}}}\frac{1}{s^2}\leq C\frac{s^{\frac{1}{p-1}+1}}{|\frac{s}{2}|^{\frac{1}{p-1}}}\frac{1}{s^2}\leq \frac{C}{s}.
%\end{align}

Since $\overline{\psi}^4(z,\sigma_0)=-\varphi(z,\sigma_0)$ with $\sigma_0=s_0-\log(1+\tau)$, it holds that for $s_0$ large enough,
$$\Big|\frac{1}{\sigma_0}-\frac{1}{s_0}\Big|\leq \frac{C}{s_0^2}|s_0-\sigma_0|=\Big|\frac{C}{s_0^2}\log(1+\tau)\Big|\leq\frac{C}{s_0^2}|\tau|.$$ Then, it follows that
by (\ref{ff}), for $s_0$ large enough,
\begin{align}\label{18}&|\overline{\psi}^4(z,\sigma_0)+\kappa-\frac{\kappa}{4ps_0}h_2(z)|\nonumber\\
%=&|\overline{\psi}^4(z,\sigma_0)+\kappa-\frac{\kappa}{4p\sigma_0}h_2(z)+\frac{\kappa}{4p\sigma_0}h_2(z)-\frac{\kappa}{4ps_0}h_2(z)|
%\nonumber\\
\leq& C\frac{|z|^4}{s_0^2}+C\frac{|\tau|(1+|z|^2)}{s_0^2}.
\end{align}
Moreover, we have that for $s_0$ large enough,
\begin{align}\label{19}|\overline{\psi}^4(z,\sigma_0)+\varphi(z,s_0)|
\leq &C|\tau|\min\Big(\frac{1}{s_0}, \frac{1+|z|^2}{s_0^2}\Big).
\end{align}
%Indeed,
%\begin{align}&|\overline{\psi}^4(z,\sigma_0)+\varphi(z,s_0)|\nonumber\\
%=&|\varphi(z,s_0)-\varphi(z,\sigma_0)|
%\nonumber\\
%=& |\partial_s\varphi(z,...)||s_0-\sigma_0|\nonumber\\
%\leq& C|\tau|\min\Big(\frac{1}{s_0}, \frac{1+|z|^2}{s_0^2}\Big).\nonumber
%\end{align}
Thus,  for $s_0$ large enough,
\begin{align}\label{20}&\Big(\frac{\sigma_0}{s_0}\Big)^2\overline{\psi}^4_0(\sigma_0)=-\kappa+O(s_0^2)+O(\tau s_0^{-1}),\end{align}

%Indeed,
%$$|\Big(\frac{\sigma_0}{s_0}\Big)^2-1|=|\Big(\frac{\sigma_0}{s_0}+1\Big)\Big(\frac{\sigma_0}{s_0}-1\Big)|\leq C\frac{|\tau|}{s_0}.$$
%$|\chi_1-1|$, $\rho(z)\leq O(e^{-Cs_0})$, $\varphi_0(s_0)=\kappa+O(s_0^{-2})$,
%\begin{align}&\Big(\frac{\sigma_0}{s_0}\Big)^2\overline{\psi}^4_0(\sigma_0)=\Big(\Big(\frac{\sigma_0}{s_0}\Big)^2-1\Big)\overline{\psi}^4_0
%(\sigma_0)+\overline{\psi}^4_0
%(\sigma_0)=-\kappa+O(s_0^{-2})+O(\tau s_0^{-1}).\end{align}
\begin{align}\label{212}\overline{\psi}^4_1(\sigma_0)=0,\ \overline{\psi}^4_2(\sigma_0)=\frac{\kappa}{4ps_0}+O(s_0^{-2})+O(\tau s_0^{-2}),\end{align}
%Indeed,
%$$-{\varphi}_2(s_0)=\frac{\kappa}{4ps_0}+O(s_0^{-2}).$$
%and by (\ref{19}),
%\begin{align}&|\overline{\psi}^4_2(\sigma_0)+{\varphi}_2(s_0)|\leq O(\tau s_0^{-2}),\end{align}
\begin{align}\label{22}|s_0^{-2}\partial_\tau[\sigma_0^2\overline{\psi}^4_0](\sigma_0)|
\leq&\frac{C}{s_0},
\end{align}
%Indeed, \begin{align}\partial_\tau[\sigma_0^2\overline{\psi}^4_0](\sigma_0)=\frac{2\sigma_0}{1+\tau}\varphi_0(\sigma_0)-\sigma_0^2\partial_\tau\varphi_0(z,\sigma_0)
%.
%\end{align}
%$$\varphi_0(z,\sigma_0)=\int_{\mathbb{R}} \varphi(z,\sigma_0)\chi_1(z,\sigma_0)\rho(z)dz,$$
%\begin{align}&\Big|\frac{\partial}{\partial \tau}\varphi_0(z,\sigma_0)\Big|\nonumber\\=&\Big|-\int_{\mathbb{R}} \frac{\partial\varphi(z,\sigma_0)}{\partial s}\frac{1}{1+\tau}\chi_1(z,\sigma_0)\rho(z)dz\Big|+\Big|\int_{\mathbb{R}} -\varphi(z,\sigma_0)\frac{1}{1+\tau}\frac{\partial\chi_1(z,\sigma_0)}{\partial s}\rho(z)dz\Big|\nonumber\\
%\leq &\frac{C}{s_0}+\frac{C}{\sigma_0^{3/2}}\leq \frac{C}{s_0},\end{align}
%$$\chi_1(z,\sigma_0)=\chi_0\Big(\frac{|z|}{K_0\sqrt{\sigma_0}}\Big).$$
%$$ \frac{\partial\chi_1(z,\sigma_0)}{\partial s}=\chi_0^{'}\Big(\frac{|z|}{K_0\sqrt{\sigma_0}}\Big)\frac{|z|}{K_0}\sigma_0^{-3/2}(-\frac{1}{2}).$$
%\begin{align}|s_0^{-2}\partial_\tau[\sigma_0^2\overline{\psi}^4_0](\sigma_0)|\leq C|s_0^{-2}\sigma_0\varphi_0(\sigma_0)|+C \Big|s_0^{-2}\sigma_0^2\frac{1}{s_0}\Big|
%\leq&\frac{C}{s_0}.
%\end{align}
and
\begin{align}\label{psi41}\partial_\tau[\sigma_0^2\overline{\psi}^4_1](\sigma_0)=0,\ \partial_\alpha\overline{\psi}^4_0(\sigma_0)=\partial_\alpha\overline{\psi}^4_1(\sigma_0)=0.
\end{align}

%By (\ref{psi31}), (\ref{psi32}) and (\ref{19}), when $z\in (\Omega-a)e^{\sigma_0/2}$,
%\begin{align}\label{psi34}&|\overline{\psi}^3(z,\sigma_0)+\overline{\psi}^4(z,\sigma_0)|\nonumber\\
%=&|\overline{\psi}^3(z,\sigma_0)-\varphi(z,\sigma_0)|
%\nonumber\\
%\leq& |\overline{\psi}^3(z,\sigma_0)-\varphi(z,s_0)|+|\varphi(z,\sigma_0)-\varphi(z,s_0)|\nonumber\\
%\leq& C\min\Big\{\Big(|\tau|+\frac{|\alpha|}{s_0}\Big)(1+|z|^3),\Big(|\tau|+\frac{|\alpha|}{\sqrt{s_0}}\Big)\Big\}+C\frac{|\tau|}{{s_0}}
%\nonumber\\
%\leq& C\min\Big\{\Big(|\tau|+\frac{|\alpha|}{s_0}\Big)(1+|z|^3),\Big(|\tau|+\frac{|\alpha|}{\sqrt{s_0}}\Big)\Big\}+C|\tau|.
%\end{align}
%When $z\not\in (\Omega-a)e^{\sigma_0/2}$, that is $|-\varphi(z,\sigma_0)|$.

In order to estimate $\overline{\psi}_{-}^3(z,\sigma_0)+\overline{\psi}_{-}^4(z,\sigma_0)$, we will use and easily check the following estimate:
\begin{align}&\Big\|\frac{v_{-}(z,\sigma_0)}{1+|z|^3}\Big\|_{L^{\infty}(\mathbb{R})}\leq
C\Big\|\frac{v(z,\sigma_0)}{1+|z|^3}\Big\|_{L^{\infty}(\mathbb{R})}.\end{align}

%Indeed,
%$$v_{-}(z,\sigma_0)=v(z,\sigma_0)\chi_1-\sum\limits_{m=0}^2 v_m(\sigma_0)h_m(z),$$
%\begin{align}&\Big|\frac{v_{-}(z,\sigma_0)}{1+|z|^3}\Big|=\Big|\frac{v(z,\sigma_0)\chi_1-\sum\limits_{m=0}^2 v_m(\sigma_0)h_m(z)}{1+|z|^3}\Big|
%\nonumber\\&\leq C\Big\|\frac{v(z,\sigma_0)}{1+|z|^3}\Big\|_{L^{\infty}(\mathbb{R})}+C\Big\|\frac{v(z,\sigma_0)}{1+|z|^3}\Big\|_{L^{\infty}(\mathbb{R})}
%\frac{\sum\limits_{m=0}^2|h_m(z)|}{1+|z|^3}\nonumber\\&
%\leq C\Big\|\frac{v(z,\sigma_0)}{1+|z|^3}\Big\|_{L^{\infty}(\mathbb{R})}.\end{align}
%\begin{align}|v_m(\sigma_0)|=&\Big|\int_{\mathbb{R}} k_m(z)v(z,\sigma_0)\chi_1(z,\sigma_0)\rho(z)dz\Big|\nonumber\\
%\leq&\Big|\int_{\mathbb{R}}| k_m(z)|\Big|\frac{v(z,\sigma_0)}{1+|z|^3}\Big|(1+|z|^3)|\chi_1(z,\sigma_0)|\rho(z)dz\Big|\nonumber\\ \leq & C\Big\|\frac{v(z,\sigma_0)}{1+|z|^3}\Big\|_{L^{\infty}(\mathbb{R})}.\end{align}
%When $z\in (\Omega-a)e^{\sigma_0/2}$, by (\ref{psi34}),
%\begin{align}\label{24}\Big|\frac{\overline{\psi}^3(z,\sigma_0)+\overline{\psi}^4(z,\sigma_0)}{1+|z|^3}\Big|
%\leq C\Big(|\tau|+\frac{|\alpha|}{s_0}\Big).
%\end{align}
For $z\not\in (\Omega-a)e^{\sigma_0/2}$,  it holds that for $s_0$ large enough,
\begin{align}\left|\frac{\overline{\psi}^3(z,\sigma_0)+\overline{\psi}^4(z,\sigma_0)}{1+|z|^3}\right|
=\frac{|-\varphi(z,\sigma_0)|}{1+|z|^3}\leq Ce^{-s_0}.\end{align}
%Hence,
%\begin{align}\label{24}|\overline{\psi}_{-}^3(z,\sigma_0)+\overline{\psi}_{-}^4(z,\sigma_0)|
%\leq C\Big(|\tau|+\frac{|\alpha|}{s_0}+e^{-\frac{3\sigma_0}{2}}\Big)(1+|z|^3).
%\end{align}

For $z\in (\Omega-a)e^{\sigma_0/2}$, we introduce
\begin{align}\Psi(z,s_0,\tau,\alpha)=\left\{\begin{array}{ll}\ \ \
\ \overline{\psi}^3(z,\sigma_0) +\overline {\psi}^4(z,\sigma_0)
 \\ =(1+\tau)^{\frac 1{p-1}}\varphi(\tilde z, s_0) - \varphi(z,\sigma_0),\ z\in(\Omega-a)e^{\sigma_0/2},\\ \ \ \ \ 0,
\ {z}\not\in (\Omega-a)e^{\sigma_0/2},
\end{array}\right.\end{align}
and
\begin{align}\Psi_1(z,\sigma_0)=\left\{\begin{array}{ll} 0,\ z\in(\Omega-a)e^{\sigma_0/2},\\-\varphi(z,\sigma_0),
\ {z}\not\in (\Omega-a)e^{\sigma_0/2},
\end{array}\right.\end{align}
where
\[
  \varphi(z,s)
  = \left[p-1+\frac{(p-1)^2}{4p} \frac{z^2}s\right]^{-\frac 1{p-1}}
+ \frac \kappa{2ps},\\
\tilde z = z\sqrt{1+\tau}+\alpha,\;\;
\sigma_0 = s_0 - \log(1+\tau).
\]

Note that $\Psi(z,s_0,0,0)=0$.
Making a Taylor
expansion, we may write
\begin{align}
  &\Psi(z,s_0,\tau,\alpha) - \tau \Psi_\tau (z,s_0,0,0)
  -\alpha  \Psi_\alpha (z,s_0,0,0)\nonumber\\=&\frac{1}{2} \Psi_{\alpha\alpha} (z,s_0,\theta(z,s_0)\tau,\theta(z,s_0)\alpha)\alpha^2
  +\frac{1}{2} \Psi_{\tau\tau} (z,s_0,\theta(z,s_0)\tau,\theta(z,s_0)\alpha)\tau^2
  \nonumber\\&+ \Psi_{\tau\alpha} (z,s_0,\theta(z,s_0)\tau,\theta(z,s_0)\alpha)\tau\alpha,\label{taylor}
\end{align}
where $0<\theta(z,s_0)<1$.

%You may want to add one third bound: $B_3(z,\sigma_0)|\tau\alpha|$, but
%since $2|\tau \alpha|\leq \tau^2+\alpha^2$, its contribution can be
%added to $B_1$ and $B_2$.

\medskip

%Now, making a cut-off with $\chi_1(z,\sigma_0)$ then projecting on the
%negative subspace, we may write from \eqref{taylor}
%\begin{align}
%&\Psi_-(z,s_0,\tau,\alpha) - \tau \left(\partial_\tau\Psi\right)_- (z,s_0,0,0)
%  -\alpha \left(\partial_\tau\Psi\right)_- (z,s_0,0,0)=\nonumber\\
 % \le &\left[b_1(s_0)\tau^2+ b_2(s_0)\alpha^2\right](1+|z|^3).\nonumber
%\end{align}
%In order to get the bound, we have to use the usual trick : for
%example, with the first term in the bound in \eqref{taylor}, you may
%find some $D_1$ such that
%\[
%| B_1(z,\sigma_0)\tau^2|\le CD_1(1+|z|^3)\tau^2,
%\]
%\[
%| B_2(z,\sigma_0)\alpha^2|\le CD_1(1+|z|^3)\alpha^2/s_0^2,
%\]
%which implies that the projection on the negative subspace (after
%cut-off) will satisfy the same bound.

First, for $z\in(\Omega-a)e^{\sigma_0/2}$,
  \begin{align}
 &\Psi_{\tau}(z,s_0,\tau,\alpha) \nonumber\\
  = &\frac 1{p-1}(1+\tau)^{\frac
      1{p-1}-1}\varphi(z\sqrt{1+\tau}+\alpha, s_0)\nonumber\\
    &-\frac{p-1}{2p}(1+\tau)^{\frac
      1{p-1}}\Big[p-1+\frac{(p-1)^2}{4p}
      \frac{(z\sqrt{1+\tau}+\alpha)^2}{s_0}\Big]^{-\frac
      1{p-1}-1}\frac{(z\sqrt{1+\tau}+\alpha)}{s_0} \times\nonumber\\
    &\times \frac{z}{2\sqrt{1+\tau}}\nonumber\\
    &+\frac{p-1}{4p}\Big[p-1+\frac{(p-1)^2}{4p} \frac{z^2}{s_0 - \log(1+\tau)}\Big]^{-\frac{1}{p-1}-1}\frac{z^2}{(s_0 - \log(1+\tau))^2}\frac{1}{1+\tau}\nonumber\\
    &-\frac{\kappa}{2p(s_0 - \log(1+\tau))^2}\frac{1}{1+\tau},
  \end{align}
which implies for $z\in(\Omega-a)e^{\sigma_0/2}$,
  \begin{align}\label{psitao}
 &\Psi_{\tau}(z,s_0,0,0) \nonumber\\
  = &\frac 1{p-1}\varphi(z,s_0)-\frac{p-1}{2p}\Big[p-1+\frac{(p-1)^2}{4p}\frac{z^2}{s_0}\Big]^{-\frac 1{p-1}-1}{\frac{z^2}{2s_0}}\nonumber
  \\ &+\frac{p-1}{4p}\Big[p-1+\frac{(p-1)^2}{4p} \frac{z^2}{s_0}\Big]^{-\frac{1}{p-1}-1}\frac{z^2}{s_0^2}-\frac{\kappa}{2ps_0^2}.
  \end{align}
 For $z\not\in(\Omega-a)e^{\sigma_0/2}$,
 \begin{align}
 \Psi_{\tau}(z,s_0,0,0)=0.
  \end{align}

In (\ref{psitao}), we see four terms. Using the usual trick, we can bound
the projection on the negative part (after cut-off) of the 2nd, 3rd
and 4th term by
\[
\frac{C}{s_0}(1+|z|^3),
\]
which after multiplication by $\tau$, should give us a convenient
rate.

\medskip

Let us then try to bound the projection on the negative part (after
cut-off) of $\varphi(z,s_0)$, namely
\[
[\varphi(z,s_0)]_-(z,\sigma_0)=\chi_1(z,\sigma_0)\varphi(z,s_0) - \sum_{j=0}^2 [\varphi(z,s_0)]_j(\sigma_0)h_j(\sigma_0).
\]
Note first that
\[
  \chi_1(z,\sigma_0)  = 1+O\Big(\frac{|z|^2}{\sigma_0}\Big),
\]
and
\[
  \varphi(z,s_0)=\kappa+O\Big(\frac{|z|^2}{s_0}\Big)+\frac{\kappa}{2ps_0}.
\]

Recalling that $\varphi$ is even in $z$, this implies that for $s_0$ large enough,
\begin{align}
  [\varphi(z,s_0)]_0(\sigma_0)=&\int \chi_1(z,\sigma_0)k_0(z)\varphi(z,s_0)\rho(z)dz\nonumber\\
  =& \int \chi_1(z,\sigma_0)\Big[\kappa+O\Big(\frac{|z|^2}{s_0}\Big)+\frac{\kappa}{2ps_0}\Big]\rho(z)dz\nonumber\\
  =&\int [(\chi_1(z,\sigma_0)-1)\kappa+\kappa]\rho(z)dz+ \int \chi_1(z,\sigma_0)\Big[O\Big(\frac{|z|^2}{s_0}\Big)
  +\frac{\kappa}{2ps_0}\Big]\rho(z)dz\nonumber\\
  =&\kappa+O\Big(\frac{1}{s_0}\Big),
\end{align}
\[
  [\varphi(z,s_0)]_1(\sigma_0)=0,
\]
and by (\ref{f1}), for $s_0$ large enough,
\begin{align}
  [\varphi(z,s_0)]_2(\sigma_0)=&\int \chi_1(z,\sigma_0)k_2(z)\varphi(z,s_0)\rho(z)dz\nonumber\\
  =&\int [(\chi_1(z,\sigma_0)-1)k_2(z)\varphi(z,s_0)\rho(z)dz+ \int k_2(z)\varphi(z,s_0)\rho(z)dz\nonumber\\
   =&\int [(\chi_1(z,\sigma_0)-1)k_2(z)\varphi(z,s_0)\rho(z)dz-\frac{\kappa}{4ps_0}+O(\frac{1}{s_0})\nonumber\\
  =&O\Big(\frac{1}{s_0}\Big).
\end{align}
This way, for $s_0$ large enough,
\begin{align}
  &[\varphi(z,s_0)]_-(z,\sigma_0)\nonumber\\=&\Big[1+O\Big(\frac{|z|^2}{\sigma_0}\Big)\Big]\Big[\kappa+O\Big(\frac{|z|^2}{s_0}\Big)+\frac{\kappa}{2ps_0}\Big]
  -\Big(\kappa+O\Big(\frac{1}{s_0}\Big)\Big)h_0(z)
                     - O\Big(\frac{1}{s_0}\Big)h_2(z)\nonumber\\
  =& \kappa + O\Big(\frac{|z|^2}{s_0}\Big) - \kappa+O\Big(\frac{1}{s_0}\Big) +
     O\Big(\frac{1+|z|^2}{s_0}\Big) \nonumber\\
  =&   O\Big(\frac{1+|z|^2}{s_0}\Big)\label{bu1}.
\end{align}

%$$O(\frac{|z|^2}{\sigma_0})O(\frac{|z|^2}{s_0})=(\chi_1-1)O(\frac{|z|^2}{s_0}).$$
Thus,  for $s_0$ large enough,
\[
|[\varphi(z,s_0)]_-(z,\sigma_0)|\le \frac{C}{s_0}(1+|z|^2) \le \frac{C}{s_0}(1+|z|^3),
\]
from which we have that for $s_0$ large enough,
\begin{align}
|\tau(\Psi_\tau)_- (z,s_0,0,0)|\le \frac{C|\tau|}{s_0}(1+|z|^3)\label{taylor1}.
 \end{align}

Second, for $z\in(\Omega-a)e^{\sigma_0/2}$,
  \begin{align}
 &\Psi_{\alpha}(z,s_0,\tau,\alpha) \nonumber\\
  = &-\frac{1}{p-1}(1+\tau)^{\frac
      1{p-1}}
   \Big[p-1+\frac{(p-1)^2}{4p}
      \frac{(z\sqrt{1+\tau}+\alpha)^2}{s_0}\Big]^{-\frac
      1{p-1}-1}\frac{(p-1)^2(z\sqrt{1+\tau}+\alpha)}{2ps_0}.
     \end{align}
Hence, for $z\in(\Omega-a)e^{\sigma_0/2}$,
 \begin{align}
 &\Psi_{\alpha}(z,s_0,0,0)
  =-\frac{1}{p-1}(1+\tau)^{\frac
      1{p-1}}
   \Big[p-1+\frac{(p-1)^2}{4p}
      \frac{z^2}{s_0}\Big]^{-\frac
      1{p-1}-1}\frac{(p-1)^2z}{2ps_0}.
     \end{align}

Write $\overline{\varphi}=\Big[p-1+\frac{(p-1)^2}{4p}
      \frac{z^2}{s_0}\Big]^{-\frac
      1{p-1}-1}z$. Then $\overline{\varphi}=(p-1)^{-\frac
      1{p-1}-1}z+O(\frac{|z|^2}{s_0})z$.

It is easy to check that $\overline{\varphi}_0(\sigma_0)=\overline{\varphi}_2(\sigma_0)=0$, and for $s_0$ large enough,
 \begin{align}
 \overline{\varphi}_1=&\int (\chi_1-1+1)(p-1)^{-\frac
      1{p-1}-1}\frac{z^2}{2}\rho dz+\int \chi_1(z,\sigma_0)O\Big(\frac{|z|^2}{s_0}\Big)\frac{z^2}{2}\rho dz\nonumber\\
      =&\int (p-1)^{-\frac
      1{p-1}-1}\Big(\frac{z^2-2}{2}\Big)\rho dz+\int (p-1)^{-\frac
      1{p-1}-1}\rho dz+O\Big(\frac{1}{s_0}\Big)\nonumber\\=&(p-1)^{-\frac
      1{p-1}-1}+O\Big(\frac{1}{s_0}\Big).
     \end{align}
     Thus, for $s_0$ large enough,
\begin{align}
&(\Psi_{\alpha})(z,s_0,0,0)_{-}(z,\sigma_0)\nonumber\\=&\Big[1+O\Big(\frac{|z|^2}{s_0}\Big)\Big]\Big[-(1+\tau)^{\frac
      1{p-1}}\frac{(p-1)}{2ps_0}(p-1)^{-\frac
      1{p-1}-1}z-(1+\tau)^{\frac
      1{p-1}}\frac{(p-1)}{2ps_0}O\Big(\frac{|z|^2}{s_0}\Big)z\Big]\nonumber\\&
  +(1+\tau)^{\frac
      1{p-1}}\frac{(p-1)}{2ps_0}\Big[(p-1)^{-\frac
      1{p-1}-1}+O\Big(\frac{1}{s_0}\Big)\Big]z
                   \nonumber\\
  =&   O\Big(\frac{1+|z|^3}{s_0^2}\Big),
\end{align}
which implies that for $s_0$ large enough,
%$$O(\frac{|z|^2}{s_0})O(\frac{|z|^2}{s_0})=(\chi_1-1)O(\frac{|z|^2}{s_0}).$$
\begin{align}
|\alpha(\Psi_{\alpha})_- (z,s_0,0,0)|\le \frac{C|\alpha|}{s_0^2}(1+|z|^3)\label{taylor2}.
\end{align}

Further, we can check that for $s_0$ large enough,
\begin{align}&|\Psi_{\tau\tau} (z,s_0,\theta(z,s_0)\tau,\theta(z,s_0)\alpha)\tau^2|\leq C(1+|z|^3)\tau^2,\nonumber\\
&|\Psi_{\tau\alpha} (z,s_0,\theta(z,s_0)\tau,\theta(z,s_0)\alpha)\tau\alpha|\leq  C(1+|z|^3)\Big(\tau^2+\frac{\alpha^2}{s_0}\Big).\nonumber\end{align}
Hence, for $s_0$ large enough,
\begin{align}&|\frac{1}{2}(\Psi_{\tau\tau})_- (z,s_0,\theta(z,s_0)\tau,\theta(z,s_0)\alpha)\tau^2|\leq C(1+|z|^3)\tau^2,\label{taylor3}\\
&|(\Psi_{\tau\alpha})_- (z,s_0,\theta(z,s_0)\tau,\theta(z,s_0)\alpha)\tau\alpha|\leq C(1+|z|^3)\Big(\tau^2+\frac{\alpha^2}{s_0}\Big).\label{taylor4}\end{align}

Finally, we have
 \begin{align}
 &\Psi_{\alpha\alpha}(z,s_0,\tau,\alpha) \nonumber\\
  = &\frac{p}{(p-1)^2}(1+\tau)^{\frac
      1{p-1}}
   \Big[p-1+\frac{(p-1)^2}{4p}
      \frac{(z\sqrt{1+\tau}+\alpha)^2}{s_0}\Big]^{-\frac
      1{p-1}-2}\frac{(p-1)^4(z\sqrt{1+\tau}+\alpha)^2}{4(ps_0)^2}\nonumber\\
      &-\frac{p-1}{2ps_0}(1+\tau)^{\frac
      1{p-1}}
   \Big[p-1+\frac{(p-1)^2}{4p}
      \frac{(z\sqrt{1+\tau}+\alpha)^2}{s_0}\Big]^{-\frac
      1{p-1}-1}=J_1+J_2.
     \end{align}

     For $J_1$, it holds that for $s_0$ large enough,
     $$|J_1(z,s_0,\theta(z,s_0)\tau,\theta(z,s_0))\alpha^2|\leq  C(1+|z|^3)\frac{\alpha^2}{s_0^2},$$
which implies  that for $s_0$ large enough,
 \begin{align}|(J_1)_-(z,s_0,\theta(z,s_0)\tau,\theta(z,s_0))\alpha^2|\leq  C(1+|z|^3)\frac{\alpha^2}{s_0^2}.\label{taylor5}\end{align}

For $J_2$, we have
 \begin{align}&J_2(z,s_0,\theta(z,s_0)\tau,\theta(z,s_0))\nonumber\\=&-\frac{p-1}{2ps_0}(1+\theta\tau)^{\frac
      1{p-1}}
   \Big[p-1+\frac{(p-1)^2}{4p}
      \frac{(z\sqrt{1+\theta\tau}+\theta\alpha)^2}{s_0}\Big]^{-\frac
      1{p-1}-1}\nonumber\\
      =&-\frac{p-1}{2ps_0}(1+\theta\tau)^{\frac
      1{p-1}}
   \Big\{\Big[p-1+\frac{(p-1)^2}{4p}
      \frac{(z\sqrt{1+\theta\tau}+\theta\alpha)^2}{s_0}\Big]^{-\frac
      1{p-1}-1}\nonumber\\
      &-\Big[p-1+\frac{(p-1)^2}{4p}
      \frac{z^2}{s_0}\Big]^{-\frac
      1{p-1}-1}\Big\}-\frac{p-1}{2ps_0}(1+\theta\tau)^{\frac
      1{p-1}}\Big\{\Big[p-1+\frac{(p-1)^2}{4p}
      \frac{z^2}{s_0}\Big]^{-\frac
      1{p-1}-1}\nonumber\\
      &+\frac{(p-1)^{-\frac
      1{p-1}-1}}{2s_0}\Big\}+\frac{(p-1)^{-\frac
      1{p-1}}}{4ps_0^2}(1+\theta\tau)^{\frac
      1{p-1}}=J_2^1+J_2^2+J_2^3.\nonumber\end{align}

It is easy to check that
\begin{align}|(J_2^1)_-(z,s_0,\theta(z,s_0)\tau,\theta(z,s_0))\alpha^2|\leq  C(1+|z|^3)\frac{\alpha^2}{s_0^2},\label{taylor6}\end{align}
and
\begin{align}|(J_2^3)_-(z,s_0,\theta(z,s_0)\tau,\theta(z,s_0))\alpha^2|\leq  C(1+|z|^3)\frac{\alpha^2}{s_0^2},\label{taylor7}
\end{align}
and by the similar argument used to prove (\ref{bu1}), we have
\begin{align}
|(J_2^2)_-(z,s_0,\theta(z,s_0)\tau,\theta(z,s_0))\alpha^2|\le C(1+|z|^3)\frac{\alpha^2}{s_0^2}\label{taylor8}.
\end{align}

Hence, by (\ref{taylor}), (\ref{taylor1}), (\ref{taylor2})-(\ref{taylor4}), (\ref{taylor5})-(\ref{taylor8}), it holds that
for $s_0$ large enough,
\begin{align}\label{2511}\left\|\frac{\overline{\psi}_{-}^3(z,\sigma_0)+\overline{\psi}_{-}^4(z,\sigma_0)}{1+|z|^3}\right\|_{L^\infty(\mathbb{R}^n)}
\leq \frac{C}{s_0}|\tau|+C\frac{|\alpha|}{s_0^2}+C\frac{|\alpha|^2}{s_0^2}+C|\tau|^2+Ce^{-s_0}.
\end{align}

When $z\in (\Omega-a)e^{\sigma_0/2}$, it holds that by (\ref{psi31}) that for $s_0$ large enough,
\begin{align}\label{25}|\overline{\psi}_{e}^3(z,\sigma_0)+\overline{\psi}_{e}^4(z,\sigma_0)|
=|(1-\chi_1)(\overline{\psi}^3(z,\sigma_0)+\overline{\psi}^4(z,\sigma_0))|\leq C\Big(|\tau|+\frac{|\alpha|}{\sqrt{s_0}}\Big).
\end{align}
When $z\not\in (\Omega-a)e^{\sigma_0/2}$, for $s_0$ large enough,
\begin{align}\label{251}|\overline{\psi}_{e}^3(z,\sigma_0)+\overline{\psi}_{e}^4(z,\sigma_0)|
=|(1-\chi_1)\varphi(z,\sigma_0)|\leq\frac{C}{\sigma_0}.\end{align}

{\bf Step 4. Expansion of $\overline{\psi}^5$}

\medskip

(i) For $\overline{\psi}_0^5(\sigma_0)$, we have that  when $s_0$ is large enough such that  $|a-\widehat{a}|=|\alpha e^{-\frac{s_0}{2}}|
\leq e^{-\frac{s_0}{2}}<\frac{\varepsilon_0}{4}$,
\begin{align} \label{star1}|\overline{\psi}_0^5(\sigma_0)|\leq C|\alpha|e^{-s_0/2}.\end{align}

Indeed,
\begin{align}\overline{\psi}_0^5(\sigma_0)=&\int_{\mathbb{R}} \overline{\psi}^5(\sigma_0)k_0(z)\chi_1(z,\sigma_0)\rho(z)dz\nonumber\\
=&\int_{(\Omega-a)e^{\sigma_0/2}} e^{-\frac{\sigma_0}{p-1}}\widehat{y}(e^{-\frac{\sigma_0}{2}}{z}+a,T-e^{-\sigma_0})[\chi(z,\sigma_0)-\chi(\widetilde{z},s_0)]\chi_1(z,\sigma_0)\rho(z)dz
\nonumber,\end{align}
where
$$\chi(z,\sigma_0)=\chi_0\Big(\frac{ze^{-\sigma_0/2}}{\varepsilon_0}\Big),\ \chi(\widetilde{z},s_0)=\chi_0\Big(\frac{\widetilde{z}e^{-s_0/2}}{\varepsilon_0}\Big)=\chi_0\Big(\frac{ze^{-\sigma_0/2}+a-\widehat{a}}{\varepsilon_0}\Big).$$

On one hand, when $x\in \Omega$ with $|x-\widehat{a}|\leq \frac{\varepsilon_0}{2}$ and $s_0$ is large enough such that  $|a-\widehat{a}|<\frac{\varepsilon_0}{4}$,
we have $\chi(z,\sigma_0)-\chi(\widetilde{z},s_0)=0$.

%Indeed,  when $s_0$ is large enough such that  $|a-\widehat{a}|=|\alpha e^{-\frac{s_0}{2}}|
%\leq e^{-\frac{s_0}{2}}<\frac{\varepsilon_0}{4}$,
% when $|z|\leq \varepsilon_0e^{\sigma_0/2}$ (i. e. $|x-a|\leq \varepsilon_0$), $\chi(z,\sigma_0)=1$.
 %When $\Big|\frac{\widetilde{z}e^{-s_0/2}}{\varepsilon_0}\Big|=\Big|\frac{ze^{-\sigma_0/2}+a-\widehat{a}}{\varepsilon_0}\Big|\leq 1$, (i. e., $|x-\widehat{a}|\leq \varepsilon_0$), $\chi(\widetilde{z},s_0)=1$.
%Since $$\Big|\frac{ze^{-\sigma_0/2}}{\varepsilon_0}\Big|=\Big|\frac{ze^{-\sigma_0/2}+a-\widehat{a}-(a-\widehat{a})}{\varepsilon_0}\Big|\leq \Big|\frac{\widetilde{z}e^{-s_0/2}}{\varepsilon_0}\Big|+
%\Big|\frac{a-\widehat{a}}{\varepsilon_0}\Big|,$$
%it holds that if $\Big|\frac{\widetilde{z}e^{-s_0/2}}{\varepsilon_0}\Big|\leq\frac{1}{2},\
%\Big|\frac{a-\widehat{a}}{\varepsilon_0}\Big|<\frac{1}{4}$, then $\chi(z,\sigma_0)=1$.
%That is, when $|x-\widehat{a}|\leq \frac{\varepsilon_0}{2}$, $|a-\widehat{a}|<\frac{\varepsilon_0}{4}$, $\chi(z,\sigma_0)=1$.

\medskip

On the other hand, for $x\in \Omega$ with  $|x-\widehat{a}|\geq \varepsilon_0/2$, it holds that
$$\Big|\chi_0\Big(\frac{ze^{-\sigma_0/2}}{\varepsilon_0}\Big)-\chi_0\Big(\frac{ze^{-\sigma_0/2}+a-\widehat{a}}{\varepsilon_0}\Big)\Big|
\leq \frac{C}{\varepsilon_0}|a-\widehat{a}|,$$
and
$$|\widehat{y}(e^{-\frac{\sigma_0}{2}}{z}+a,T-e^{-\sigma_0})|
=|\widehat{y}(e^{-\frac{s_0}{2}}{\widetilde{z}}+\widehat{a},\widehat{T}-e^{-s_0})|\leq \widehat{\eta}_0\leq 1.$$

Hence,  when $s_0$ is large enough such that  $|a-\widehat{a}|=|\alpha e^{-\frac{s_0}{2}}|
\leq e^{-\frac{s_0}{2}}<\frac{\varepsilon_0}{4}$,
\begin{align}\label{}|\overline{\psi}_0^5(\sigma_0)|\leq \frac{C}{\varepsilon_0}e^{-\frac{\sigma_0}{p-1}}|a-\widehat{a}|=\frac{C}{\varepsilon_0}e^{-\frac{\sigma_0}{p-1}}
|a-\widehat{a}|e^{s_0/2}e^{-s_0/2}\leq C|\alpha|e^{-s_0/2}.\end{align}

(ii) Similarly to (\ref{star1}), we have that when $s_0$ is large enough,
\begin{align}\label{star2} \Big|\frac{\sigma_0^2}{s_0^2}\overline{\psi}_1^5(\sigma_0)\Big|\leq C|\alpha|e^{-s_0/2},\end{align}
and
\begin{align}\label{star3} \Big|\overline{\psi}_2^5(\sigma_0)\Big|\leq C|\alpha|e^{-s_0/2}.\end{align}

(iii) For $\sigma_0^{-2}\partial_\tau[\sigma_0^2\overline{\psi}^5_0]$, we have that when $s_0$ is large enough,
\begin{align}\label{star5}&\Big|\sigma_0^{-2}\partial_\tau[\sigma_0^2\overline{\psi}^5_0]\Big|\leq
Ce^{-Cs_0}(1+|\alpha|).\end{align}

%Take $\varepsilon_0$ even smaller, $(\widehat{a}-3\varepsilon_0,\widehat{a}+3\varepsilon_0)\subset \omega$, when $\Big|\frac{\widetilde{z}e^{-s_0/2}}{\varepsilon_0}\Big|\geq\frac{5}{2}$,
 %$|a-\widehat{a}|<\frac{\varepsilon_0}{2}$,
%$$\Big|\frac{ze^{-\sigma_0/2}}{\varepsilon_0}\Big|=\Big|\frac{ze^{-\sigma_0/2}+a-\widehat{a}-(a-\widehat{a})}{\varepsilon_0}\Big|\geq \Big|\frac{\widetilde{z}e^{-s_0/2}}{\varepsilon_0}\Big|-
%$$\Big|\frac{a-\widehat{a}}{\varepsilon_0}\Big|\geq \frac{5}{2}-\frac{1}{2}=2,$$
%which implies
%$\chi(z,\sigma_0)=\chi(\widetilde{z},s_0)=0$.

%Recall that $(\widehat{a}-4\varepsilon_0,\widehat{a}+4\varepsilon_0)\subset \omega$. If  $|a-\widehat{a}|<\frac{\varepsilon_0}{4}$, then $({a}-3\varepsilon_0,{a}+3\varepsilon_0)\subset \omega$. When $|x-\widehat{a}|\geq \frac{9}{4}\varepsilon_0$ and $s_0$ is large enough such that  $|a-\widehat{a}|=|\alpha e^{-\frac{s_0}{2}}|
%\leq e^{-\frac{s_0}{2}}<\frac{\varepsilon_0}{4}$,
% we have
% $$\Big|\frac{ze^{-\sigma_0/2}}{\varepsilon_0}\Big|=\Big|\frac{ze^{-\sigma_0/2}+a-\widehat{a}-(a-\widehat{a})}{\varepsilon_0}\Big|\geq \Big|\frac{\widetilde{z}e^{-s_0/2}}{\varepsilon_0}\Big|0-
%\Big|\frac{a-\widehat{a}}{\varepsilon_0}\Big|\geq\frac{9}{4}-\frac{1}{4}\geq 2.$$
%Then $\chi(z,\sigma_0)-\chi(\widetilde{z},s_0)=0$, and

Similarly to $\overline{\psi}_0^1(\sigma_0)$, it hods that for $s_0$ large enough,
\begin{align}&\Big(\frac{\sigma_0}{s_0}\Big)^2\overline{\psi}_0^5(\sigma_0)\nonumber\\=&\Big(\frac{\sigma_0}{s_0}\Big)^2\int_{\mathbb{R}} e^{-\frac{\sigma_0}{p-1}}\widehat{y}(e^{-\frac{s_0}{2}}\widetilde{z}+\widehat{a},\widehat{T}-e^{-s_0})\chi_1(z,\sigma_0)\rho(z)
[\chi(z,\sigma_0)-\chi(\widetilde{z},s_0)]dz \nonumber\\=&\Big(\frac{s_0-\log(1+\tau)}{s_0}\Big)^2(1+\tau)^{-\frac{1}{2}}\int_{\mathbb{R}}
\chi_1(z,s_0-\log(1+\tau))e^{-\frac{\sigma_0}{p-1}}\widehat{y}(e^{-\frac{s_0}{2}}\widetilde{z}+\widehat{a},\widehat{T}-e^{-s_0})
\nonumber\\&\cdot[\chi(z,\sigma_0)-\chi(\widetilde{z},s_0)]\rho(z)d\widetilde{z}.\end{align}
Hence,
\begin{align}&\frac{\partial}{\partial\tau}\Big[\Big(\frac{\sigma_0}{s_0}\Big)^2\overline{\psi}_0^5\Big](\sigma_0)=
-2\frac{1}{s_0(1+\tau)}\frac{s_0-\log(1+\tau)}{s_0}
\int_{(\Omega-a)e^{\sigma_0/2}}
\chi_1(z,s_0-\log(1+\tau))\nonumber\\&e^{-\frac{\sigma_0}{p-1}}\widehat{y}(e^{-\frac{s_0}{2}}\widetilde{z}+\widehat{a},\widehat{T}-e^{-s_0})
[\chi(z,\sigma_0)-\chi(\widetilde{z},s_0)]\rho(z)dz\nonumber\\&-\frac{1}{2}(1+\tau)^{-1}\Big(\frac{s_0-\log(1+\tau)}{s_0}\Big)^2
\int_{(\Omega-a)e^{\sigma_0/2}}
\chi_1(z,s_0-\log(1+\tau))e^{-\frac{\sigma_0}{p-1}}\widehat{y}(e^{-\frac{s_0}{2}}\widetilde{z}\nonumber\\&+\widehat{a},\widehat{T}-e^{-s_0})
[\chi(z,\sigma_0)-\chi(\widetilde{z},s_0)]\rho(z)dz
\nonumber\\&+\frac{1}{(p-1)(1+\tau)}\Big(\frac{s_0-\log(1+\tau)}{s_0}\Big)^2
\int_{(\Omega-a)e^{\sigma_0/2}}
\chi_1(z,s_0-\log(1+\tau))e^{-\frac{\sigma_0}{p-1}}\widehat{y}(e^{-\frac{s_0}{2}}\widetilde{z}\nonumber\\&+\widehat{a},\widehat{T}-e^{-s_0})
[\chi(z,\sigma_0)-\chi(\widetilde{z},s_0)]\rho(z)dz
\nonumber\\&-\Big(\frac{s_0-\log(1+\tau)}{s_0}\Big)^2
\int_{(\Omega-a)e^{\sigma_0/2}}(1+\tau)^{-1}\chi_{1\sigma_0}(z,s_0-\log(1+\tau))
e^{-\frac{\sigma_0}{p-1}}\widehat{y}(e^{-\frac{s_0}{2}}\widetilde{z}\nonumber\\&+\widehat{a},\widehat{T}-e^{-s_0})
[\chi(z,\sigma_0)-\chi(\widetilde{z},s_0)]\rho(z)dz
\nonumber\\&-\frac{1}{2}\Big(\frac{s_0-\log(1+\tau)}{s_0}\Big)^2
\int_{(\Omega-a)e^{\sigma_0/2}}(1+\tau)^{-1}z\chi_{1z}(z,s_0-\log(1+\tau))
e^{-\frac{\sigma_0}{p-1}}\widehat{y}(e^{-\frac{s_0}{2}}\widetilde{z}\nonumber\\&+\widehat{a},\widehat{T}-e^{-s_0})
[\chi(z,\sigma_0)-\chi(\widetilde{z},s_0)]\rho(z)dz
\nonumber\\&-\frac{1}{2}\Big(\frac{s_0-\log(1+\tau)}{s_0}\Big)^2
\int_{(\Omega-a)e^{\sigma_0/2}}(1+\tau)^{-1}z\chi_1(z,s_0-\log(1+\tau))
e^{-\frac{\sigma_0}{p-1}}\widehat{y}(e^{-\frac{s_0}{2}}\widetilde{z}\nonumber\\&+\widehat{a},\widehat{T}-e^{-s_0})
\chi_z(z,\sigma_0)\rho(z)dz+\Big(\frac{s_0-\log(1+\tau)}{s_0}\Big)^2
\int_{(\Omega-a)e^{\sigma_0/2}}\frac{z^2}{4}e^{-\frac{\sigma_0}{p-1}}\widehat{y}(e^{-\frac{s_0}{2}}\widetilde{z}\nonumber\\&
+\widehat{a},\widehat{T}-e^{-s_0})
\chi_1(z,\sigma_0)[\chi(z,\sigma_0)-\chi(\widetilde{z},s_0)](1+\tau)^{-1}\rho(z)dz\nonumber\\&-\Big(\frac{s_0-\log(1+\tau)}{s_0}\Big)^2
\int_{(\Omega-a)e^{\sigma_0/2}}(1+\tau)^{-1}\chi_1(z,s_0-\log(1+\tau))
e^{-\frac{\sigma_0}{p-1}}\widehat{y}(e^{-\frac{s_0}{2}}\widetilde{z}\nonumber\\&+\widehat{a},\widehat{T}-e^{-s_0})
\chi_{\sigma_0}(z,\sigma_0)\rho(z)dz.\end{align}
%Since
%$$\chi_{1\sigma_0}(z,\sigma_0)=-\chi_0'\Big(\frac{|z|}{K_0\sqrt{\sigma_0}}\Big)\frac{|z|}{K_0}\frac{\sigma_0^{-3/2}}{2},\ \chi_{1z}(z,\sigma_0)=\chi_0'\Big(\frac{|z|}{K_0\sqrt{\sigma_0}}\Big)\frac{1}{K_0\sqrt{\sigma_0}},$$
%$$\chi_{\sigma_0}(z,\sigma_0)=\chi_0'\Big(\frac{ze^{-\sigma_0/2}}{\varepsilon_0}\Big)\frac{z}{\varepsilon_0}\frac{-e^{-\sigma_0/2}}{2},\
%\chi_{z}(z,\sigma_0)=\chi_0'\Big(\frac{ze^{-\sigma_0/2}}{\varepsilon_0}\Big)\frac{e^{-\sigma_0/2}}{\varepsilon_0},$$
%similarly to the estimate of $\overline{\psi}_0^1(\sigma_0)$, we can deal with  the sixth and the eighth term.

 For $|x-\widehat{a}|\geq \varepsilon_0/2$, we have
$$|\widehat{y}(e^{-\frac{\sigma_0}{2}}{z}+a,T-e^{-\sigma_0})|
=|\widehat{y}(e^{-\frac{s_0}{2}}{\widetilde{z}}+\widehat{a},\widehat{T}-e^{-s_0})|\leq \widehat{\eta}_0\leq 1.$$

If $|x-\widehat{a}|
< \frac{\varepsilon_0}{2}$ and $|a-\widehat{a}|<\frac{\varepsilon_0}{4}$, then $\chi(z,\sigma_0)-\chi(\widetilde{z},s_0)=0$,
which implies $\partial_\tau[\chi(z,\sigma_0)-\chi(\widetilde{z},s_0)]=0$. Thus, for $s_0$ large enough,  we can get (\ref{star5}) by
the similar argument used in the proof of (\ref{partial tao 0}).

\medskip

(iv) For $\partial_\tau\Big[\frac{\sigma_0^2}{s_0^2}\overline{\psi}_1^5\Big]$, we have that for $s_0$ large enough,
\begin{align}\label{star7} &\Big|\partial_\tau\Big[\frac{\sigma_0^2}{s_0^2}\overline{\psi}_1^5\Big]\Big| \leq
Ce^{-Cs_0}(1+|\alpha|).\end{align}

Since
\begin{align}\overline{\psi}_1^5(\sigma_0)=&\int_{\mathbb{R}} k_1(z)\overline{\psi}^5(\sigma_0)\chi_1(z,\sigma_0)\rho(z)dz\nonumber\\
=&\int_{(\Omega-a)e^{\sigma_0/2}} \frac{z}{2}e^{-\frac{\sigma_0}{p-1}}\widehat{y}(e^{-\frac{\sigma_0}{2}}{z}+a,T-e^{-\sigma_0})\chi_1(z,\sigma_0)
[\chi(z,\sigma_0)-\chi(\widetilde{z},s_0)]\rho(z)dz\nonumber,\end{align}
similarly to (iii), we can get (\ref{star7}).

\medskip

(v) For $\partial_\alpha\overline{\psi}_0^5$ and $\partial_\alpha\overline{\psi}_1^5$, we can use the similar techniques used in the proof of (iii)
to get that for $s_0$ large enough,
\begin{align}\label{star8}\Big|\partial_\alpha\overline{\psi}_0^5\Big| \leq
Ce^{-Cs_0}(1+|\alpha|),\end{align}
and
\begin{align}\label{star9}\Big|\partial_\alpha\overline{\psi}_1^5\Big| \leq
Ce^{-Cs_0}(1+|\alpha|).\end{align}

(vi) For $\overline{\psi}_{-}^5(z,\sigma_0)$, similarly to (i), we have that  for $s_0$ large enough,
\begin{align}\label{star10}&\left\|\frac{\overline{\psi}_{-}^5(z,\sigma_0)}{1+|z|^3}\right\|_{L^{\infty}(\mathbb{R})}\leq
\left\|\frac{\overline{\psi}^5(z,\sigma_0)}{1+|z|^3}\right\|_{L^{\infty}(\mathbb{R})}\nonumber\\
=
& \left\|\frac{ e^{-\frac{\sigma_0}{p-1}}\widehat{y}(e^{-\frac{\sigma_0}{2}}{z}+a,T-e^{-\sigma_0})
[\chi(z,\sigma_0)-\chi(\widetilde{z},s_0)]}{1+|z|^3}\right\|_{L^{\infty}((\Omega-a)e^{\sigma_0/2})}\nonumber\\
\leq& Ce^{-s_0/2}|\alpha|.\end{align}

(vii) For $\overline{\psi}_{e}^5(z,\sigma_0)$,  similarly to (i), we have that  for $s_0$ large enough,
\begin{align}\label{star11}&\Big\|\overline{\psi}_{e}^5(z,\sigma_0)\Big\|_{L^{\infty}(\mathbb{R})} \nonumber\\
\leq &\|e^{-\frac{\sigma_0}{p-1}}\widehat{y}(e^{-\frac{\sigma_0}{2}}{z}+a,T-e^{-\sigma_0})(1-\chi_1(z,\sigma_0))
[\chi(z,\sigma_0)-\chi(\widetilde{z},s_0)]\|_{L^{\infty}((\Omega-a)e^{\sigma_0/2)}}\nonumber\\
\leq& Ce^{-s_0/2}|\alpha|.\end{align}

%When $\sigma_0$ is large enough, there exists a constant $C_0>0$ such that $|z|\geq C_0e^{\sigma_0/2}$ ($\{z;|z|\geq 2\varepsilon_0e^{\sigma_0/2}\}
%\subset (\Omega-a)e^{\sigma_0/2})$.
%Then
%$$|\varphi(z,\sigma_0)|\leq (p-1+\frac{Ce^{\sigma_0}}{\sigma_0})^{-\frac{1}{p-1}}+\frac{\kappa}{2p\sigma_0}\leq\frac{C}{\sigma_0}.$$

%$\forall (T,a)\in \overline{D}_{A_0,s_0,y_0}$, $x\in \Omega$, when $|x-a|\geq\frac{\varepsilon_0}{4}$, $|z|\geq \frac{\varepsilon_0}{4}e^{\sigma_0/2}$????.
%\begin{align}|\overline{\psi}(s_0,y_0,T,a,z)|=|\varphi(z,\sigma_0)|\leq (p-1+\frac{(p-1)^2}{4p}\Big|\frac{z}{\sqrt{\sigma_0}}\Big|^2)^{-\frac{1}{p-1}}+\frac{\kappa}{2p\sigma_0}\leq Ce^{\frac{-C\sigma_0}{p-1}}+\frac{C}{\sigma_0}.
%\end{align}

{\bf Conclusion of Lemma \ref{lem 2.2}.}

\medskip

For $s_0$ large enough, Lemma \ref{lem 2.2} (i) holds from (\ref{1}), (\ref{psi2inequality}),  (\ref{11}) and (\ref{star1});

\medskip

For $s_0$ large enough, Lemma \ref{lem 2.2} (ii) holds from (\ref{1}), (\ref{psi3inequality}), (\ref{12}), (\ref{212}) and (\ref{star2});

\medskip

For $s_0$ large enough, Lemma \ref{lem 2.2} (iii) holds from  (\ref{1}), (\ref{psi4inequality}), (\ref{13}), (\ref{212}) and (\ref{star3});

\medskip

For $s_0$ large enough, Lemma \ref{lem 2.2} (iv) holds from (\ref{1}), (\ref{psi5inequality}), (\ref{14}), (\ref{22}) and (\ref{star5});

\medskip

For $s_0$ large enough, Lemma \ref{lem 2.2} (v) holds from (\ref{1}), (\ref{psi6inequality}), (\ref{15}), (\ref{psi41}) and (\ref{star7});

\medskip

For $s_0$ large enough, Lemma \ref{lem 2.2} (vi) holds from  (\ref{1}), (\ref{psi7inequality}), (\ref{16}), (\ref{psi41}) and (\ref{star8});

\medskip

For $s_0$ large enough, Lemma \ref{lem 2.2} (vii) holds from  (\ref{1}), (\ref{psi8inequality}), (\ref{17}), (\ref{psi41})  and (\ref{star9});

\medskip

For $s_0$ large enough, Lemma \ref{lem 2.2} (viii) holds from  (\ref{1}), (\ref{psi9inequality}), (\ref{2511}) and  (\ref{star10});

\medskip

For $s_0$ large enough, Lemma \ref{lem 2.2} (ix) holds from (\ref{1}), (\ref{10}), (\ref{25}),  (\ref{251}) and  (\ref{star11}).

\bigskip

{\bf Proof of Proposition \ref{prop3.1}.}

\medskip

Assuming Lemma \ref{lem 2.2} holds.

(i) Introducing the following change of functions and variables:
\begin{align}\label{transformation}\widetilde{\psi}_m(s_0,\widetilde{\varepsilon}_0,\widetilde{\tau},\widetilde{\alpha})=\sigma_0^2\overline{\psi}_m(s_0,y_0,T,a)\end{align}
for $m=0,\ 1$, $\widetilde{\varepsilon}_0=y_0-\widehat{y}_0$, $\widetilde{\tau}=s_0^2\tau$, $\widetilde{\alpha}=s_0\alpha$. By Lemma \ref{lem 2.2}, it holds that
whenever $\|\widetilde{\varepsilon}_0\|_{W_0^{1,q}(\Omega)\cap W^{2,q}(\Omega)}\leq \frac{1}{C_0(s_0)}$, $|\widetilde{\tau}|\leq \frac{s_0^2}{16}$, $|\widetilde{\alpha}|\leq \frac{s_0}{4}$
and $s_0$ is large enough, it follows that
\begin{align}\label{y1}&\Big|\widetilde{\psi}_0(s_0,\widetilde{\varepsilon}_0,\widetilde{\tau},\widetilde{\alpha})-s_0^2\widehat{q}_0(s_0)
-\frac{{\kappa} \widetilde{\tau}}{p-1}\Big|\nonumber\\ =&
\Big|\sigma_0^2\overline{\psi}_0(s_0,y_0,T,a)-s_0^2\widehat{q}_0(s_0)
-\frac{\kappa s_0^2\tau}{p-1}\Big|\nonumber\\ =&
s_0^2\Big|\frac{\sigma_0^2}{s_0^2}\overline{\psi}_0(s_0,y_0,T,a)-\widehat{q}_0(s_0)
-\frac{\kappa \tau}{p-1}\Big|\nonumber\\ \leq& Cs_0^2\|\widetilde{\varepsilon}_0\|_{
{W_0^{1,q}(\Omega)\cap W^{2,q}(\Omega)}}+C\frac{\widehat{A}}{s_0^2}|\widetilde{\tau}|+C\frac{\widehat{A}}{s_0}|\widetilde{\alpha}|+C\widehat{A}^2s_0^{\frac{3}{2}}e^{-s_0}
+C\frac{\widehat{A}^2}{s_0^{\frac{1}{2}}}|\widetilde{\tau}|+C\frac{\widehat{A}^2}{s_0^{\frac{3}{2}}}|\widetilde{\alpha}|^3
\nonumber\\&+C\widehat{A}^2\log s_0(\frac{|\widetilde{\tau}|}{s_0^2}
+\frac{|\widetilde{\alpha}|^2}{s_0^2})+Ce^{-s_0}s_0^2+C\frac{\widetilde{\tau}^2}{s_0^2}+C\frac{|\widetilde{\tau}|}{s_0}+C\frac{\widetilde{\alpha}^2}{s_0}
+C|\widetilde{\alpha}|s_0e^{-s_0/2}
,\end{align}
\begin{align}\label{y2}&\Big|\widetilde{\psi}_1(s_0,\widetilde{\varepsilon}_0,\widetilde{\tau},\widetilde{\alpha})-s_0^2\widehat{q}_1(s_0)
+\frac{2b\kappa}{(p-1)^2}\widetilde{\alpha}\Big|\nonumber\\ =&
\Big|\sigma_0^2\overline{\psi}_1(s_0,y_0,T,a)-s_0^2\widehat{q}_1(s_0)
+\frac{2b\kappa}{(p-1)^2}s_0\alpha \Big|\nonumber\\ =&
s_0^2\Big|\frac{\sigma_0^2}{s_0^2}\overline{\psi}_1(s_0,y_0,T,a)-\widehat{q}_1(s_0)
+\frac{2b\kappa}{(p-1)^2}\frac{\alpha }{s_0}\Big|\nonumber\\ \leq& Cs_0^2\|\widetilde{\varepsilon}_0\|_{
{W_0^{1,q}(\Omega)\cap W^{2,q}(\Omega)}}+C\widehat{A}^2s_0^{\frac{3}{2}}e^{-s_0}
+C\frac{\widehat{A}}{s_0^2}|\widetilde{\tau}|+C\frac{\widehat{A}^2}{s_0^{\frac{5}{2}}}|\widetilde{\tau}^2|\nonumber\\&+C\frac{\widehat{A}^2}{s_0^{\frac{3}{2}}}|\widetilde{\alpha}|^3
+C\frac{\widehat{A}^2}{s_0^{\frac{3}{2}}}|\widetilde{\tau}|
+C\frac{\widehat{A}}{s_0^2}|\widetilde{\tau}|+C\frac{\widehat{A}}{s_0^2}|\widetilde{\alpha}|^2+C\widehat{A}^2\log s_0\Big(\frac{|\widetilde{\alpha}|}{s_0}\Big)\nonumber\\&+C\frac{\widehat{A}^2}{s_0^{\frac{3}{2}}}|\widetilde{\tau}\widetilde{\alpha}|+Ce^{-s_0}s_0^2
+C\frac{\widetilde{\tau}^2}{s_0^2}+C\frac{|\widetilde{\tau}|}{s_0}+C\frac{|\widetilde{\alpha}|}{s_0}\nonumber\\&+C\frac{|\widetilde{\tau}\widetilde{\alpha}|}{s_0}
+C\frac{|\widetilde{\alpha}|^3}{s_0}
+C|\widetilde{\alpha}|s_0e^{-s_0/2}+C|\widetilde{\tau}|e^{-Cs_0}+C|\widetilde{\alpha}|^2e^{-Cs_0}
.\end{align}

Let
\begin{align}
\mbox{Jac}_{\widetilde{\tau},\widetilde{\alpha}}(s_0,\widetilde{\varepsilon}_0,\widetilde{\psi}_0,\widetilde{\psi}_1)(\widetilde{\tau},\widetilde{\alpha})=\left|\begin{matrix}
\frac{\partial \widetilde{\psi}_0}{\partial\widetilde{\tau}} & \frac{\partial \widetilde{\psi}_0}{\partial\widetilde{\alpha}}\\\\
\frac{\partial \widetilde{\psi}_1}{\partial\widetilde{\tau}} & \frac{\partial \widetilde{\psi}_1}{\partial\widetilde{\alpha}}
\end{matrix}\right|=\frac{\partial \widetilde{\psi}_0}{\partial\widetilde{\tau}}\frac{\partial \widetilde{\psi}_1}{\partial\widetilde{\alpha}}
-\frac{\partial \widetilde{\psi}_0}{\partial\widetilde{\alpha}}\frac{\partial \widetilde{\psi}_1}{\partial\widetilde{\tau}}.
\end{align}

Since
\begin{align}
&\frac{\partial \widetilde{\psi}_0}{\partial\widetilde{\tau}}-\frac{\kappa}{p-1}
=\frac{\partial( \sigma_0^2\overline{\psi}_0)}{\partial{\tau}}\frac{\partial{\tau}}{\partial\widetilde{\tau}}-\frac{\kappa}{p-1}\nonumber\\=&
\frac{1}{s_0^2}\frac{\partial (\sigma_0^2\overline{\psi}_0)}{\partial{\tau}}-\frac{\kappa}{p-1}=\frac{\sigma_0^2}{s_0^2}\frac{1}{\sigma_0^2}\frac{\partial (\sigma_0^2\overline{\psi}_0)}{\partial{\tau}}-\frac{\sigma_0^2}{s_0^2}\frac{\kappa}{p-1}\nonumber\\&+\Big(\frac{\sigma_0^2}{s_0^2}-1\Big)\frac{\kappa}{p-1},
\end{align}
and
\begin{align}&\Big|\frac{\sigma_0^2}{s_0^2}-1\Big|
\leq\frac{C}{s_0}|\log(1+\tau)|\leq \frac{C|\tau|}{s_0}= \frac{C|\widetilde{\tau}|}{s_0^3},\end{align}
we have by Lemma \ref{lem 2.2} that for $s_0$ large enough, the following four inequalities holds:
\begin{align}&\Big|\frac{\partial \widetilde{\psi}_0}{\partial\widetilde{\tau}}-\frac{\kappa}{(p-1)}\Big|
\nonumber\\ \leq& C\|\widetilde{\varepsilon}_0\|_{
{W_0^{1,q}(\Omega)\cap W^{2,q}(\Omega)}}+C\frac{\widehat{A}}{s_0^2}+Ce^{-Cs_0}
+C\frac{\widehat{A}^2}{s_0^{\frac{1}{2}}}\Big(\frac{1}{s_0}+\frac{|\widetilde{\tau}|}{s_0^{2}}+\frac{|\widetilde{\alpha}|}{s_0}\Big)\nonumber\\&+
C\frac{\widehat{A}^2}{s_0^2}\log s_0+C\Big(\frac{1}{s_0}+
\frac{|\widetilde{\tau}|}{s_0^2}+\frac{|\widetilde{\alpha}|}{s_0}\Big)
+C\frac{|\widetilde{\alpha}|}{s_0}e^{-Cs_0}+C\frac{|\widetilde{\tau}|}{s_0^3}
,\end{align}
\begin{align}&\Big|\frac{\partial \widetilde{\psi}_0}{\partial\widetilde{\alpha}}\Big|=\Big|\frac{\partial( \sigma_0^2\overline{\psi}_0)}{\partial{\alpha}}\frac{\partial{\alpha}}{\partial\widetilde{\alpha}}\Big|=\Big|\frac{\partial( \overline{\psi}_0)}{\partial{\alpha}}\frac{\sigma_0^2}{s_0}\Big|
\nonumber\\ \leq& Cs_0\|\widetilde{\varepsilon}_0\|_{
{W_0^{1,q}(\Omega)\cap W^{2,q}(\Omega)}}+C\frac{\widehat{A}}{s_0}+\frac{C\widehat{A}^2\log s_0|\widetilde{\alpha}|}{s_0^2}+Cs_0^{\frac{1}{2}}\widehat{A}^2e^{-s_0}+Cs_0e^{-s_0}
\nonumber\\&+C\frac{{|\widetilde\alpha|}}{s_0}+C\frac{|\widetilde{\tau}|}{s_0}
+C\frac{\widehat{A}^2}{s_0^{\frac{3}{2}}}|\widetilde{\tau}|+C\frac{\widehat{A}^2}{s_0^{\frac{3}{2}}}|\widetilde{\alpha}|^2+C\frac{{|\widetilde{\alpha}|}^2}{s_0}
+C|\widetilde{\alpha}|e^{-Cs_0}+Cs_0e^{-Cs_0}
,\end{align}
\begin{align}&\Big|\frac{\partial \widetilde{\psi}_1}{\partial\widetilde{\tau}}\Big|=\Big|\frac{\partial( \sigma_0^2\overline{\psi}_1)}{\partial{\tau}}\frac{\partial{\tau}}{\partial\widetilde{\tau}}\Big|=\Big|\frac{\partial( \sigma_0^2 \overline{\psi}_1)}{\partial{\tau}}\frac{1}{s_0^2}\Big|
\nonumber\\ \leq& C\|\widetilde{\varepsilon}_0\|_{
{W_0^{1,q}(\Omega)\cap W^{2,q}(\Omega)}}+C\frac{\widehat{A}}{s_0^2}+C\frac{\widehat{A}^2}{s_0^{\frac{1}{2}}}\Big(\frac{1}{s_0}+\frac{|\widetilde{\tau}|}{s_0^{2}}+\frac{|\widetilde{\alpha}|}{s_0}\Big)+C\Big(\frac{1}{s_0}+
\frac{|\widetilde{\tau}|}{s_0^2}
\nonumber\\&
+\frac{|\widetilde{\alpha}|}{s_0}\Big)+Ce^{-Cs_0}
+C\frac{|\widetilde{\alpha}|}{s_0}e^{-Cs_0}
,\end{align}
and
\begin{align}
&\Big|\frac{\partial \widetilde{\psi}_1}{\partial\widetilde{\alpha}}+\frac{2b\kappa}{(p-1)^2}\Big|=\Big|\frac{\partial( \overline{\psi}_1)}{\partial{\alpha}}\frac{\sigma_0^2}{s_0}+\frac{2b\kappa}{(p-1)^2}\Big|\nonumber\\=&\Big|\frac{\partial( \overline{\psi}_1)}{\partial{\alpha}}\frac{\sigma_0^2}{s_0}+\frac{1}{s_0}\frac{\sigma_0^2}{s_0}\frac{2b\kappa}{(p-1)^2}
-\frac{\sigma_0^2}{s_0^2}\frac{2b\kappa}{(p-1)^2}+\frac{2b\kappa}{(p-1)^2}\Big|
 \nonumber\\\leq& Cs_0\|\widetilde{\varepsilon}_0\|_{
{W_0^{1,q}(\Omega)\cap W^{2,q}(\Omega)}}+C\frac{\widehat{A}^2}{s_0}\log s_0+C\frac{\widehat{A}}{s_0^2}|\widetilde{\alpha}|+Cs_0^{\frac{1}{2}}\widehat{A}^2e^{-s_0}
+C\frac{1}{s_0}+C\frac{|\widetilde{\tau}|}{s_0}\nonumber\\&
+C\frac{\widehat{A}^2}{s_0^{\frac{3}{2}}}|\widetilde{\tau}|+C\frac{\widehat{A}^2}{s_0^{\frac{3}{2}}}|\widetilde{\alpha}|^2+C\frac{|\widetilde{\alpha}|^2}{s_0}
+C|\widetilde{\alpha}|e^{-Cs_0}+Cs_0e^{-Cs_0}+C\frac{|\widetilde{\tau}|}{s_0^3}
.\end{align}

Consider now  $s_0$ large, and $\widetilde{\varepsilon}_0$ such that
\begin{align}\label{estimate}\|\widetilde{\varepsilon}_0\|_{
{W_0^{1,q}(\Omega)\cap W^{2,q}(\Omega)}}\leq \widehat{\varepsilon}_0(s_0):=\frac{1}{s_0^3\widetilde{C}_0(s_0)}.\end{align}
From the above-mentioned expansions, we see that for $s_0$ large enough, the function
$$(\widetilde{\tau},\widetilde{\alpha})\rightarrow (\widetilde{\psi}_0,\widetilde{\psi}_1)(s_0,\widetilde{\varepsilon}_0,\widetilde{\tau},\widetilde{\alpha})$$
is $C^1$-diffemorphism from the rectangle
$$\mathcal{R}_{A}=\Big[-\frac{2(p-1)A}{\kappa},\frac{2(p-1)A}{\kappa}\Big]\times \Big[-\frac{(p-1)^2A}{b\kappa},\frac{(p-1)^2A}{b\kappa}\Big]$$
($\forall (\widetilde{\tau},\widetilde{\alpha})\in \mathcal{R}_{A}$, and $\mbox{Jac}_{\widetilde{\tau},\widetilde{\alpha}}(s_0,\varepsilon_0,\widetilde{\psi}_0,\widetilde{\psi}_1)\neq 0$,)
onto a set $\mathcal{E}_{A,s_0,y_0}$ which approaches (in some appropriate sense) from the rectangle
$$[s_0^2\widehat{q}_0(s_0)-2A,s_0^2\widehat{q}_0(s_0)+2A]\times[s_0^2\widehat{q}_1(s_0)-2A,s_0^2\widehat{q}_1(s_0)+2A]$$
as $s_0\rightarrow\infty$, where $A>0$ large enough will be fixed later.

\medskip

Since $\widehat{q}(s_0)\in V_{{K}_0,\widehat{A}}(s_0)$, we have
$$|s_0^2\widehat{q}_0(s_0)|\leq \widehat{A},\ |s_0^2\widehat{q}_1(s_0)|\leq \widehat{A},$$
we clearly see that
$$[-A,A]^2\subset \mathcal{E}_{A,s_0,y_0}$$
for $A\geq 2\widehat{A}$ and $s_0$ large enough, hence, there exists a set $\overline{D}_{A,s_0,y_0}\subset \mathcal{R}_{A}$ such that
$$(\widetilde{\psi}_0,\widetilde{\psi}_1)(s_0,\varepsilon_0,\widetilde{D}_{A,s_0,y_0})=[-A,A]^2.$$

Moreover, from (\ref{y1}) and (\ref{y2}), the function $(\widetilde{\psi}_0,\widetilde{\psi}_1)$ has degree -1 on the boundary of
$\overline{D}_{A,s_0,y_0}$. By transformation $(\ref{transformation})$, the conclusion of item (i) holds.

\medskip

(ii) Take $(T,a)\in\overline{D}_{A,s_0,y_0}$ and let us check that $\overline{\psi}(s_0,y_0,T,a)\in V_{{K}_0,A}(\sigma_0)$.
Note that $\widetilde{\tau}=s_0^2\tau$, $\widetilde{\alpha}=s_0\alpha$, $\tau=(T-\widehat{T})e^{s_0}$, $\alpha=(a-\widehat{a})e^{s_0/2}$ and
\begin{align}\label{y3}\overline{D}_{A,s_0,y_0}\subset \Big\{(T,a)\Big|\ |T-\widehat{T}|\leq \frac{2e^{-s_0}(p-1)A}{\kappa s_0^2},\ |a-\widehat{a}|\leq
\frac{e^{-s_0/2}(p-1)^2A}{b\kappa s_0}\Big\}.\end{align}
From item (i), we know by construction that
$$|\overline{\psi}_m|\leq A\sigma_0^{-2},\ m=0,1.$$
For the other estimates to be checked, note first from (\ref{y3}), and the definition (\ref{dddd}) of $(\tau,\alpha)$ that
$$ |\tau|\leq \frac{2(p-1)A}{\kappa s_0^2},\ |\alpha|\leq \frac{(p-1)^2A}{b\kappa s_0}.$$
Therefore, using (\ref{estimate}) and then items (iii), (viii) and (ix) of Lemma  \ref{lem 2.2}, we see that for $s_0$ large, we have
$$|\overline{\psi}_2|< C\widehat{A}^2\sigma_0^{-2}\log \sigma_0,$$
\begin{align}&\left\|\frac{\overline{\psi}_{-}(z,\sigma_0)}{1+|z|^3}\right\|_{L^{\infty}(\mathbb{R})}
<\frac{C\widehat{A}}{\sigma_0^2},\end{align}
and
\begin{align}&\Big\|{\overline{\psi}_{e}(z,\sigma_0)}\Big\|_{L^{\infty}(\mathbb{R})}<\frac{C\widehat{A}^2}{\sigma_0^{\frac{1}{2}}}.\end{align}
If $A\geq \overline{C}\widehat{A}$ for some large $\overline{C}>0$ and $s_0\geq s_0(A)$, then we see that the conclusion follows.

\medskip

(iii) Take $(T,a)\in\overline{D}_{A,s_0,y_0}$ with $A\geq \overline{C}\widehat{A}$. Recall that  for all $x\in\Omega$ with $|x-\widehat{a}|\geq \varepsilon_0/2$, $|\widehat{y}(x,t)|\leq\widehat{\eta}_0$ for any $t\in[0,\widehat{T})$. Then, for any $s_0\geq \overline{s}_0(A)$ but fixed,
there exists a constant $C_3>0$ such that $\|\widehat{y}(\widehat{T}-e^{-s_0})\|_{L^\infty(\Omega)}\leq C_3$. By system (\ref{profile1}), we have that if $\|\widetilde{\varepsilon}_0\|_{W_0^{1,q}(\Omega)\cap W^{2,q}(\Omega)}$ is small enough, then
$\|y_{y_0}(t)\|_{L^\infty(\Omega)}\leq C_4$ for some $C_4>0$, where $t=\widehat{T}-e^{-s_0}=T-e^{-\sigma_0}$, and
$\|y_{y_0}(t)-\widehat{y}(t)\|_{L^\infty(\Omega)}\leq C_5(s_0)\|\widetilde{\varepsilon}_0\|_{W_0^{1,q}(\Omega)\cap W^{2,q}(\Omega)}.$
Thus, if $C_5(s_0)\|\widetilde{\varepsilon}_0\|_{W_0^{1,q}(\Omega)\cap W^{2,q}(\Omega)}\leq\widehat{\eta}_0$, then for all $x\in\Omega$
 with $|x-a|\geq 3{\varepsilon}_0/4$ and when $s_0$ is large enough such that  $|a-\widehat{a}|=|\alpha e^{-\frac{s_0}{2}}|
\leq e^{-\frac{s_0}{2}}<\frac{\varepsilon_0}{4}$, which implies
 $|x-\widehat{a}|\geq |x-a|-|a-\widehat{a}|>{\varepsilon}_0/2$, we have $|{y}_{y_0}(x,T-e^{-\sigma_0})|\leq2\widehat{\eta}_0$.\end{proof}
% if$\widehat{\eta}_0<\Theta(\varepsilon,\sigma_1)=\min\Big\{1,\frac{\eta_0/16}{C_5e^{(\sigma_1+1)^{p-1}}}\Big\}
%$, where $\sigma_1= C_4(K_0)\Big[\frac{(p-1)^2|\frac{\varepsilon_0}{4}|^2}{8p|\log|\frac{\varepsilon_0}{4}||}\Big]^{-\frac{1}{p-1}}$.

{\bf Proof of Theorem \ref{Pros1.2}.} By Proposition {\ref{prop3.1}} and lemmas in Subsection 2.1, we can use the very similar techniques
we used in the proof of Proposition \ref{lemma1.3} in Subsection 2.2 to complete the proof of Theorem \ref{Pros1.2}.

\section{Proof of Theorem \ref{Main Tho}}
In this section, we will use Theorem \ref{Pros1.2} and  the feedback null controllability
 results for linear heat equations obtained in \cite{Sirbu} to prove Theorem \ref{Main Tho}.

\begin{proof}
Consider $p>1$. By Proposition \ref{lemma1.3}, we  conclude  that for  any $a\in \omega$, there exists
  $T_0>0$ such that for any $T\in (0,T_0)$, there exists some initial
  data $\widetilde{y}_0\in C_0^\infty(\omega)$ such that the corresponding solution
$y$  to  (\ref{profile1}) exists on $[0,T)$, $T$ is the blowup time of $y$ and $y$
has a unique blowup point $a$.

Then,  given  $(a,T)\in \omega\times(0,+\infty)$,  we take a time
$T_1\in(0, \min\{T_0,T/2\})$. Applying Proposition \ref{lemma1.3}, we know that there exists initial data $\widetilde{y}_0\in C_0^\infty(\omega)$ such that the solution $y$ to  (\ref{profile1}) blows up at time $T_1$ with $a$ being its unique
blowup point. Introducing $$\mathcal{Y}^*(x,t):=y(x, t-T+T_1), \ (x,t)\in \Omega
\times [T-T_1, T),$$
we see that $\mathcal{Y}^*$ satisfies the following equation:
\begin{eqnarray}\nonumber
\left\{\begin{array}{ll} \mathcal{Y}^*_t-
\Delta \mathcal{Y}^*=\mathds{1}_\omega |\mathcal{Y}^*|^{p-1}\mathcal{Y}^*,&(x,t)\in \Omega\times (T-T_1, T),\\
\mathcal{Y}^*=0,& (x,t)\in \partial\Omega\times(T-T_1,T),\\
\displaystyle  \mathcal{Y}^*(x,T-T_1)=\widetilde{y}_0(x),&
x\in \Omega.
\end{array}\right.
\end{eqnarray}
Moreover, $\mathcal{Y}^*$ blows up at time $T$, only at the blowup point $a$.

\medskip

On the other hand, given $y_0\in H_0^1(\Omega)$,  take
the following system in consideration:
\begin{eqnarray}\label{131}
\left\{\begin{array}{ll} \mathcal{Z}_t-
\Delta \mathcal{Z}=\mathds{1}_\omega v,&(x,t)\in \Omega\times  (0,T-T_1),\\
\mathcal{Z}=0,&(x,t)\in \partial\Omega\times  (0,T-T_1),\\
\displaystyle  \mathcal{Z}(x,0)=y_0(x)-\widetilde{y}_0(x),&x\in \Omega,
\end{array}\right.
\end{eqnarray}
where $v\in L^2(0,T-T_1;H)$. Since $y_0-\widetilde{y}_0\in H_0^1(\Omega)$, it is well known that for each $v\in L^2(0,T-T_1;H)$, there exists a unique  solution $\mathcal{Z}$ in $C([0,T-T_1];H_0^1(\Omega))$ to (\ref{131}).

\medskip

Consider now the following optimal control problem:
$$(\mathcal{P})\ \min\Big\{\int_0^{T-T_1}\|v(t)\|_{H}^2dt;\ \mathcal{Z}\ \mbox{satisfies}\ (\ref{131}),\ \mathcal{Z}(T-T_1)=0\Big\},$$
together with the Riccati system (\ref{xe1.2}). By Theorem 2.1 in \cite{Sirbu} and its proof, there exists a unique mild solution $P\in C_S([0,T-T_1); \Sigma^+(H))$ to problem
(\ref{xe1.2}).
Moreover, $v(t)=-\mathds{1}_\omega^*P(t)\mathcal{Z}(t)$ defined for all $t\in [0,T-T_1)$ is the optimal feedback control for problem ($\mathcal{P}$).

\medskip

Set now for all  $t\in [0,T-T_1)$,
$\check{y}(t):=\mathcal{Z}(t)+\widetilde{y}_0(x)$ and $$u_1(t):=v(t)-\Delta \widetilde{y}_0(x)=-\mathds{1}_\omega^*P(t)(\check{y}(t)-\widetilde{y}_0(x))-
\Delta \widetilde{y}_0(x).$$  It
holds by the construction of $\widetilde{y}_0(x)$ that
\begin{eqnarray}\nonumber\Delta \widetilde{y}_0(x)=\mathds{1}_\omega \Delta \widetilde{y}_0(x).\end{eqnarray}
Then,  it follows that $\check{y}$ is the solution to the following equation:
\begin{eqnarray}\label{review1}
\left\{\begin{array}{ll} \check{y}_t-
\Delta \check{y}=\mathds{1}_\omega u_1,&x\in \Omega\times  (0,T-T_1),\\
\check{y}=0,& x\in \partial\Omega\times  (0,T-T_1),\\
\displaystyle  \check{y}(x,0)=y_0(x),&
x\in \Omega,
\end{array}\right.
\end{eqnarray}
with \begin{eqnarray}\nonumber\check{y}(T-T_1)=\widetilde{y}_0(x).\end{eqnarray}

Set
$$y(x,t):=\left\{\begin{array}{ll}\check{y}(x,t),\ &(x,t)\in \Omega\times  [0,T-T_1), \\
\mathcal{Y}^*(x,t), \ &(x,t)\in \Omega\times [T-T_1,T),
\end{array}\right.$$
and
$$u(x,t):=\left\{\begin{array}{ll}-\mathds{1}_\omega^*P(t)({y}(t)-\widetilde{y}_0)(x)-\Delta \widetilde{y}_0(x),\ &(x,t)\in \Omega\times  (0,T-T_1), \\
|y|^{p-1}y(x,t), \ &(x,t)\in \Omega\times [T-T_1,T).
\end{array}\right.$$
Then, $y$ is the solution to (\ref{xe1.1})
with  the   feedback control $u$, and  it follows that
 $y$ blows up in $T$ and has $a$ as a unique blowup point.

 \medskip

 Consider the following two equations,
 \begin{eqnarray}\label{review3}
\left\{\begin{array}{ll} \widetilde{\Phi}_t-
\Delta \widetilde{\Phi}=0,&x\in \Omega,\  t>0,\\
\widetilde{\Phi}=0,& x\in \partial\Omega,\   t>0,\\
\displaystyle  \widetilde{\Phi}(x,0)=\widetilde{\Phi}_0(x),&
x\in \Omega,
\end{array}\right.
\end{eqnarray}
and
 \begin{eqnarray}\label{review4}
\left\{\begin{array}{ll} \phi_t-
\Delta \phi=0,&x\in \Omega,\  t>0,\\
\phi=0,& x\in \partial\Omega,\   t>0,\\
\displaystyle  \phi(x,0)=\widetilde{y}_0(x),&
x\in \Omega.
\end{array}\right.
\end{eqnarray}

It is well known that if $\widetilde{\Phi}_0\in W_0^{1,q}(\Omega)\cap W^{2,q}(\Omega)$ with $q>n+2$, then the solution $\widetilde{\Phi}\in C([0,T];W^{2,q}(\Omega))\cap W_0^{1,q}(\Omega))\cap C^1([0,T];L^q(\Omega))$ for any $T>0$. By Sobolev embedding theorem, we have
$W_0^{1,q}(\Omega)\cap W^{2,q}(\Omega)\hookrightarrow C^\alpha (\overline{\Omega})$ and $C([0,T];W^{2,q}(\Omega))\cap W_0^{1,q}(\Omega))\cap C^1([0,T];L^q(\Omega))\hookrightarrow C^{\beta,\beta/2} (\overline{\Omega\times(0,T)})$ for some $0<\alpha<1$ and $0<\beta<1$, where $q>n+2$.

\medskip

If $\widetilde{\Phi}_0\in H_0^1(\Omega)$, then for any $t>0$,
\begin{eqnarray}\label{review5}
\Big\|\widetilde{\Phi}(t)-\phi(t)\Big\|_{L^\infty(\Omega)}\leq Ct^{-\frac{n}{4}}\|\widetilde{\Phi}_0-\widetilde{y}_0\|_{L^2(\Omega)}\leq Ct^{-\frac{n}{4}}\|\widetilde{\Phi}_0-\widetilde{y}_0\|_{H_0^1(\Omega)}.
\end{eqnarray}
Thus, for $1<q<\infty$,
\begin{eqnarray}\label{review5}
\|\widetilde{\Phi}(t)-\phi(t)\|_{W^{2,q}(\Omega)}\leq C\Big(1+\frac{2}{t}\Big)\Big\|\widetilde{\Phi}\Big(\frac{t}{2}\Big)-\phi\Big(\frac{t}{2}\Big)\Big\|_{L^q(\Omega)}\\ \leq C\Big(1+\frac{2}{t}\Big)\Big(\frac{t}{2}\Big)^{-\frac{n}{4}}\|\widetilde{\Phi}_0-\widetilde{y}_0\|_{H_0^1(\Omega)}.
\end{eqnarray}
Hence
\begin{eqnarray}\label{review8}
\|\widetilde{\Phi}(t)-\phi(t)\|_{L^\infty(\Omega)\cap W^{2,q}(\Omega)}\leq \widetilde{C}\Big(t^{-\frac{n}{4}}+\Big(1+\frac{2}{t}\Big)\Big(\frac{t}{2}\Big)^{-\frac{n}{4}}\Big)\|\widetilde{\Phi}_0-\widetilde{y}_0\|_{H_0^1(\Omega)},
\end{eqnarray}
for some $\widetilde{C}>0$.

 Let $\varepsilon>0$ and let  $\varepsilon_1>0$ be given in Theorem \ref{Pros1.2} for $\varepsilon/2$.  Since $\widetilde{y}_0\in C_0^\infty(\omega)$, we have that there exists a time $\widehat{\varepsilon}_1\in (0, \min(\frac{\varepsilon}{4}, \frac{T_1}{4}))$ such that
\begin{eqnarray}\label{review6}
\|\phi(\widehat{\varepsilon}_1)-\widetilde{y}_0\|_{L^\infty(\Omega)\cap W^{2,q}(\Omega)}< \frac{\varepsilon_1}{2}.
\end{eqnarray}

Since $\check{y}\in C([0,T-T_1];H_0^1(\Omega))$,  there exist ${\delta}\in(0,\min\{\varepsilon/4,T-T_1\})$, $\delta^*>0$ with
$({\delta}-\delta^*,{\delta}+\delta^*)\subset(0,\min\{\varepsilon/4,T-T_1\})$
  such that for any $\widehat{\varepsilon}\in ({\delta}-\delta^*,{\delta}+\delta^*)
  $,
 \begin{eqnarray}\label{checky}\|\check{y}(T-T_1-\widehat{\varepsilon})-\widetilde{y}_0\|_{H_0^1(\Omega)}
<\frac{1}{\widetilde{C}\Big(\widehat{\varepsilon}_1^{-\frac{n}{4}}+\Big(1+\frac{2}{\widehat{\varepsilon}_1}\Big)
\Big(\frac{\widehat{\varepsilon}_1}{2}\Big)^{-\frac{n}{4}}\Big)}\frac{\varepsilon_1}{4}.\end{eqnarray}

Since $P\in C_S([0,T-T_1-{\delta}+\delta^*]; \Sigma^+(H))$, there exists $\varepsilon^*>0$ such that
for any initial data $y_0^*$ satisfying $\|y_0^*-y_0\|_{H_0^1(\Omega)}<\varepsilon^*$ and any
$\widehat{\varepsilon}\in ({\delta}-\delta^*,{\delta}+\delta^*)$, the  solution
$y^*$ to  (\ref{xe1.1}) with initial data $y_0^*$ and with the following feedback control
$$u(x,t):=-\mathds{1}_\omega^*P(t)({y^*}(t)-\widetilde{y}_0)(x)-\Delta \widetilde{y}_0(x),\ (x,t)\in \Omega\times  (0,T-T_1-{\delta}+\delta^*)$$
satisfies
\begin{eqnarray}\label{review10}\|{y^*}(T-T_1-\widehat{\varepsilon})-\check{y}(T-T_1-\widehat{\varepsilon})\|_{H_0^1(\Omega)}<
\frac{1}{\widetilde{C}\Big(\widehat{\varepsilon}_1^{-\frac{n}{4}}+\Big(1+\frac{2}{\widehat{\varepsilon}_1}\Big)\Big(\frac{\widehat{\varepsilon}_1}{2}\Big)^{-\frac{n}{4}}\Big)}\frac{\varepsilon_1}{4},\end{eqnarray}
from which and (\ref{checky}), it holds  that
 \begin{eqnarray}\label{checky1}\|{y^*}(T-T_1-\widehat{\varepsilon})-\widetilde{y}_0\|_{H_0^1(\Omega)}<\frac{1}{\widetilde{C}\Big(\widehat{\varepsilon}_1^{-\frac{n}{4}}+\Big(1+\frac{2}{\widehat{\varepsilon}_1}\Big)
 \Big(\frac{\widehat{\varepsilon}_1}{2}\Big)^{-\frac{n}{4}}\Big)}\frac{\varepsilon_1}{2}.\end{eqnarray}

 Further, we consider the following equation,
 \begin{eqnarray}\nonumber
\left\{\begin{array}{ll} Y_t-
\Delta Y=0,& (x,t)\in \Omega\times(T-T_1-\widehat{\varepsilon}, T-T_1-\widehat{\varepsilon}+\widehat{\varepsilon}_1),\\
Y=0,& (x,t)\in \partial\Omega\times(T-T_1-\widehat{\varepsilon}, T-T_1-\widehat{\varepsilon}+\widehat{\varepsilon}_1),\\
\displaystyle  Y(x,T-T_1-\widehat{\varepsilon})=y^*(T-T_1-\widehat{\varepsilon})(x),&
x\in \Omega,
\end{array}\right.
\end{eqnarray}
and
 \begin{eqnarray}\nonumber
\left\{\begin{array}{ll} \widetilde{\phi}_t-
\Delta \widetilde{\phi}=0,&  (x,t)\in \Omega\times(T-T_1-\widehat{\varepsilon}, T-T_1-\widehat{\varepsilon}+\widehat{\varepsilon}_1),\\
\widetilde{\phi}=0,& (x,t)\in \partial\Omega\times(T-T_1-\widehat{\varepsilon}, T-T_1-\widehat{\varepsilon}+\widehat{\varepsilon}_1),\\
\displaystyle  \widetilde{\phi}(x,T-T_1-\widehat{\varepsilon})=\widetilde{y}_0(x),&
x\in \Omega.
\end{array}\right.
\end{eqnarray}
By (\ref{review8}) and (\ref{review6}), it holds that
\begin{align}\label{review7}
&\|Y(T-T_1-\widehat{\varepsilon}+\widehat{\varepsilon}_1)-\widetilde{\phi}(T-T_1-\widehat{\varepsilon}+\widehat{\varepsilon}_1)\|_{L^\infty(\Omega)\cap W^{2,q}(\Omega)}\nonumber\\ \leq &\widetilde{C}\Big(\widehat{\varepsilon}_1^{-\frac{n}{4}}+\Big(1+\frac{2}{\widehat{\varepsilon}_1}\Big)\Big(\frac{\widehat{\varepsilon}_1}{2}\Big)^{-\frac{n}{4}}\Big)
\|y^*(T-T_1-\widehat{\varepsilon})-\widetilde{y}_0\|_{H_0^1(\Omega)}
\end{align}
and
\begin{eqnarray}\label{review9}
\|\widetilde{\phi}(T-T_1-\widehat{\varepsilon}+\widehat{\varepsilon}_1)-\widetilde{y}_0\|_{L^\infty(\Omega)\cap W^{2,q}(\Omega)}< \frac{\varepsilon_1}{2},
\end{eqnarray}
from which, (\ref{review7}) and (\ref{checky1}) we have
\begin{align}\label{review10}
&\|Y(T-T_1-\widehat{\varepsilon}+\widehat{\varepsilon}_1)-\widetilde{y}_0\|_{L^\infty(\Omega)\cap W^{2,q}(\Omega)\cap W_0^{1,q}(\Omega)} < \varepsilon_1.
\end{align}

Then by Theorem \ref{Pros1.2}, the solution $\overline{Y}$
to the following equation
\begin{eqnarray}\nonumber
\left\{\begin{array}{ll} \overline{Y}_t-
\Delta \overline{Y}=\mathds{1}_\omega |\overline{Y}|^{p-1}\overline{Y},&x\in \Omega,\  t>T-T_1-\widehat{\varepsilon}+\widehat{\varepsilon}_1,\\
\overline{Y}=0,& x\in \partial\Omega,\ t>T-T_1-\widehat{\varepsilon}+\widehat{\varepsilon}_1,\\
\displaystyle  \overline{Y}(x,T-T_1-\widehat{\varepsilon}+\widehat{\varepsilon}_1)=\overline{Y}(T-T_1-\widehat{\varepsilon}+\widehat{\varepsilon}_1)(x),&
x\in \Omega,
\end{array}\right.
\end{eqnarray}
blows up at time $T^*$, only at the blowup point $a^*$,
$|T^*-(T-T_1-\widehat{\varepsilon}+\widehat{\varepsilon}_1)-T_1|<\varepsilon/2$ and $|a^*-a|<\varepsilon/2$.
Since $\widehat{\varepsilon}<\varepsilon/4$ and $\widehat{\varepsilon}_1<\varepsilon/4$, we have $|T^*-T|<\varepsilon$.

Moreover, for all $R>0$,
 \begin{eqnarray}\nonumber
\sup\limits_{\big\{|x-a^*|\leq R
\sqrt{(T^*-t)|\log(T^*-t)|}\big\}}\Big|(T^*-t)^{\frac{1}{p-1}}\overline{Y}(x,t)-f\Big(\frac{x-a^*}{\sqrt{(T^*-t)
|\log(T^*-t)|}}\Big)\Big|\rightarrow 0
\end{eqnarray}
as $t\rightarrow T^*$, where
$f$ is defined in (\ref{fnew}).

Set
$$y(x,t):=\left\{\begin{array}{ll}y^*(x,t),\ &(x,t)\in \Omega\times  [0,T-T_1-\widehat{\varepsilon}), \\
Y(x,t),\ &(x,t)\in \Omega\times  [T-T_1-\widehat{\varepsilon},T-T_1-\widehat{\varepsilon}+\widehat{\varepsilon}_1),\\
\overline{Y}(x,t), \ &(x,t)\in \Omega\times [T-T_1-\widehat{\varepsilon}+\widehat{\varepsilon}_1,T^*),
\end{array}\right.$$
and
$$u(x,t):=\left\{\begin{array}{ll}-\mathds{1}_\omega^*P(t)({y}(t)-\widetilde{y}_0)(x)-\Delta \widetilde{y}_0(x),\ (x,t)\in \Omega\times  [0,T-T_1-\widehat{\varepsilon}),
\\
0,\ \ \ \ \ \ \ \ \ \ \ \ \ \ \ \ \ \ \ \ \ \ \ \ \ \ \ \ \ (x,t)\in \Omega\times  [T-T_1-\widehat{\varepsilon},T-T_1-\widehat{\varepsilon}+\widehat{\varepsilon}_1),\\
|y|^{p-1}y(x,t), \ \ \ \ \ \ \ \ \ \ (x,t)\in \Omega\times [T-T_1-\widehat{\varepsilon}+\widehat{\varepsilon}_1
,T^*).
\end{array}\right.$$

Then from the above argument, Theorem \ref{Main Tho} holds.
\end{proof}

\end{document}